\documentclass[a4paper,12pt,reqno]{amsart}
\usepackage{amsfonts,amssymb,hyperref,amsthm,enumerate,
	color,stmaryrd,pgf,tikz,comment,multirow}
\usepackage{amsmath}
\usepackage{tikz-cd}
\usepackage[left=2.6cm,right=2.6cm,top=3cm,bottom=3cm,bindingoffset=0cm]{geometry}
\usepackage{booktabs,caption}
\usepackage[flushleft]{threeparttable}
\numberwithin{equation}{section}

\newtheorem{theorem}{Theorem}[section]
\newtheorem{lemma}[theorem]{Lemma}
\newtheorem{proposition}[theorem]{Proposition}

\theoremstyle{definition}
\newtheorem{remark}[theorem]{Remark}

\newtheorem{corollary}[theorem]{Corollary}
\newtheorem{problem}[theorem]{Problem}
\newtheorem{question}[theorem]{Question}

\newtheorem*{case}{Case}

\DeclareMathOperator{\Ker}{Ker}
\DeclareMathOperator{\Core}{Core}

\DeclareMathOperator{\Aut}{Aut}

\DeclareMathOperator{\Mon}{Mon}

\DeclareMathOperator{\av}{av}
\DeclareMathOperator{\ord}{ord}

\title[Skew morphisms of skew-type four on cyclic $2$-groups]%
{Classification and enumeration of skew morphisms of skew-type four on cyclic $2$-groups}
\author[K. Hu]{Kan Hu$^{*}$}
\address{K. Hu,
\newline\indent
Department of Mathematics, Zhejiang Ocean University, Zhoushan, Zhejiang 316022, P.R. China
}
\email{hukan@zjou.edu.cn}
\thanks{This work was supported by National Natural
Science Foundation of China (12471332).
\newline\indent
$^{\ast}$ Corresponding author e-mail: hukan@zjou.edu.cn
}
\keywords{skew morphism, Cayley map, group factorization}
\subjclass[2020]{20D40, 05E18, 11N37}

\begin{document}
\maketitle

\begin{abstract}
A skew morphism on a finite group $A$ is a permutation $\varphi$ on $A$ that fixes the identity element
of $A$ and for which there exists an integer-valued function $\pi:A\to\mathbb{Z}_{|\varphi|}$ such that
$\varphi(xy)=\varphi(x)\varphi^{\pi(x)}(y)$ for all $x,y\in A$. The kernel of $\varphi$
is the subgroup $\Ker\varphi=\{x\in A\mid \pi(x)=1\}$, and the index $[A:\Ker\varphi]$ is called
the skew-type of $\varphi$. In this paper we construct, classify and enumerate the
skew morphisms of skew-type four on cyclic $2$-groups. Our main results give explicit
formulas for all such skew morphisms and closed-form expressions for their numbers.
\end{abstract}

\section{Introduction}

A \textit{map} $M$ is an embedding $i:\Gamma\hookrightarrow\mathbb{S}$ of a connected
graph $\Gamma$ into a closed surface $\mathbb{S}$
such that each component of $\mathbb{S}\backslash i(\Gamma)$ is homeomorphic to the unit open disc.
A map is orientable if its supporting surface is orientable; otherwise it is non-orientable.
Throughout the paper, all maps considered are orientable.
An \textit{automorphism} of an orientable map $M$ is an automorphism of the embedded graph $\Gamma$
which extends to an orientation-preserving self-homeomorphism of the supporting surface $\mathbb{S}$.
The set $\Aut(M)$ of all automorphisms of $M$ forms the automorphism group of $M$ under composition.

To study the symmetry of a map $M$, it is common to introduce a combinatorial description of $M$:
$M$ can be identified with a triple $(\Omega,\rho,\lambda)$, where $\Omega$ is the
set of arcs of $M$, $\rho$ is the product of the local orientations around each vertex $v$, induced by the
global orientation of the supporting surface $\mathbb{S}$,
which cyclically permutes the arcs emanating from $v$, and $\lambda$ is the involution
transposing the arcs incident with a common edge of the embedded graph $\Gamma$.
In this way, an automorphism of $M$ can be identified with a permutation on $\Omega$ which commutes with
both $\rho$ and $\lambda$, so that the automorphism group $\Aut(M)$, being the centralizer
of the transitive permutation group $\Mon(M)=\langle\rho,\lambda\rangle$, acts semi-regularly on $\Omega$.
If this action is also transitive and hence regular, the map itself is called \textit{regular}.

A map $M$ is called a Cayley map if $\Aut(M)$ contains a subgroup $A$ acting regularly on its vertices.
For a Cayley map $M=(\Omega,\rho,\lambda)$ with a simple underlying graph $\Gamma$,
we may identify the set of vertices of $M$ with the subgroup $A$, so that $\Gamma=(V,E)$,
where $V=A$ and $E=\big\{\{x,y\}\mid x,y\in A,\ xy^{-1}\in S\big\}$,
and the map $M$ can be described as a triple $(A,S,p)$, where $S$ is the set of vertices
adjacent to the identity $1_A$, and $p$ is the cyclic permutation on $S$ corresponding to the local
orientation around the vertex $1_A$, with $\rho$ and $\lambda$ specified by
\[
\rho(x,xs)=(x,xp(s))\quad\text{and}\quad\lambda(x,xs)=(xs,x),
\]
for all $x,y\in A$ and $s\in S$. In particular, as shown by Jajcay and \v{S}ir\'{a}\v{n} in their seminal
paper~\cite[Theorem 1]{JS2002}, $M$ is regular if and only if $p$ extends to a skew morphism
$\varphi$ on $A$. Thus, for a given finite group $A$, the study of regular Cayley maps on $A$
is essentially the study of \textit{Cayley skew morphisms} on $A$, namely,
skew morphisms which contain an inverse-closed orbit generating $A$.

The second motivation to study skew morphisms is related to group factorizations.
A \textit{cyclic complementary extension} of a finite group $A$ is a finite group
$G$ which contains $A$ as a subgroup and admits a factorization into an exact product
$G=AC$ of $A$ and a cyclic complementary subgroup $C$.
If we choose and fix a generator $c$ of the cyclic subgroup $C$, then, for every $x\in A$, there exists
a unique element $\varphi(x)\in A$ and a unique integer $\Pi(x)\in\mathbb{Z}_{|c|}$ such that
$cx=\varphi(x)c^{\Pi(x)}$. It is easily seen that $\varphi:A\to A$ is a skew morphism
on $A$ and $\Pi:A\to\mathbb{Z}_{|c|}$ reduces to the power function $\pi: A\to\mathbb{Z}_{|\varphi|}$
via the congruence $\pi(x)\equiv\Pi(x)\pmod{|\varphi|}$.
The function $\Pi$, known as an \textit{extended power function} of $\varphi$, together with
the skew morphism $\varphi$, can be used to recover the group $G$ in a canonical
way~\cite[Theorem 1]{HJ2026}. Therefore, for a given finite group $A$, the study of
cyclic complementary extensions of $A$ is essentially the study of skew morphisms on $A$ and the
associated extended power functions.

Skew morphisms have been extensively investigated in connection with
regular Cayley maps and cyclic complementary extensions of finite groups.
To the best of our knowledge,
regular Cayley maps have been classified completely for the cyclic groups~\cite{CT2014},
the dihedral groups~\cite{KK2016,KK2017,KK2021,KKF2006,KMM2013,WF2005,Zhang2015a,Zhang2015b},
and the elementary abelian groups~\cite{DYL2023},
and partially for other abelian groups~\cite{CJT2007,CJT2007b},
certain nonabelian metacyclic groups such
as semi-dihedral groups~\cite{Oh2009,HQ2026} and dicyclic groups~\cite{KO2008}, and others.
On the other hand, cyclic complementary extensions have been investigated for the monolithic groups
(including the finite non-abelian simple groups)~\cite{BCV2022},
the characteristically simple groups~\cite{CDL2022}, dihedral groups~\cite{HKK2022,HKK2025}, dicyclic groups~\cite{DLY2025},
and semi-dihedral groups~\cite{Yu2026}, and others.

In the theory of skew morphisms, it is an important problem to construct, classify and enumerate skew morphisms
for some nontrivial family of finite groups. In this direction, apart from the aforementioned results on skew morphisms
which are related to regular Cayley maps and cyclic complementary extensions, significant progress has been
made towards the classification of skew morphisms of cyclic groups~\cite{B2024,KN2011,KN2017,DH2019,HKK2024},
dihedral groups~\cite{HKK2022,WHYZ2019,ZD2016}, and noncyclic abelian groups~\cite{CJT2016}.

The focus of this paper is the classification of skew morphisms
on cyclic groups, a problem studied initially by Kov\'acs and Nedela in 2011~\cite{KN2011}
and which has remained open since then. In that paper, using an ingenious method from Schur rings,
the authors showed that every skew morphism $\varphi$
on $\mathbb{Z}_n$ can be decomposed into a direct product $\varphi=\varphi_1\times \varphi_2$
of two skew morphisms $\varphi_i$ on $\mathbb{Z}_{n_i}$ whenever $n=n_1n_2$ with $\gcd(n_1,n_2)=1$ and
\[
\gcd(n_1,\phi(n_2))=\gcd(n_2,\phi(n_1))=1.
\]
They continued to classify skew morphisms on the cyclic $p$-groups for odd primes $p$~\cite{KN2017}.
However, the skew morphisms of cyclic $2$-groups have not been classified yet, even though
a classification of the corresponding skew-product groups was given in~\cite{DH2019}.

Another path to advance the study of skew morphisms is to introduce new arithmetic invariants.
The first such invariant, introduced by Zhang in 2015~\cite{Zhang2015a}, is the \textit{skew-type},
which is defined to be the index $\kappa:=[A:\Ker\varphi]$, where
\[
 \Ker\varphi:=\{x\in A\mid\pi(x)\equiv1\pmod{|\varphi|}\}
 \]
 is a subgroup of $A$, termed the \textit{kernel} of $\varphi$~\cite{JS2002}.
The skew-type of a skew morphism measures how far the skew morphism is from a group automorphism.
For instance, skew morphisms of skew-type $\kappa=1$ coincide with group automorphisms,
while skew morphisms of skew-type $\kappa=2$ are generalizations of $t$-balanced skew morphisms.
The second invariant, introduced by Bachrat\'y and Jajcay in 2016~\cite{BJ2016},
is the \textit{period} of $\varphi$, which is defined to be the smallest
positive integer $\omega$ such that $\pi(\varphi^{\omega}(x))\equiv\pi(x)\pmod{|\varphi|}$ for
all $x\in A$. They showed that if $\varphi$ is a skew morphism of period $\omega$, then $\varphi^{\omega}$
is a skew morphism of period $1$ on the same group~\cite{BJ2016}. Note that skew morphisms
of period $1$ are usually called \textit{coset-preserving}~\cite{BJ2017} or \textit{smooth}~\cite{WHYZ2019}.
Motivated by these discoveries we pose the following problem.
\begin{problem}\label{prob}
For cyclic additive groups $\mathbb{Z}_n$,
\begin{enumerate}[(A)]
\item classify all skew morphisms of a prescribed skew-type $\kappa$ on $\mathbb{Z}_n$;
\item classify all skew morphisms of a prescribed period $\omega$ on $\mathbb{Z}_n$.
\end{enumerate}
\end{problem}
For cyclic additive groups $\mathbb{Z}_n$, the skew morphisms of skew-type $\kappa\leq 3$
have been documented in~\cite{CJT2007,HKZ2021}, while the skew morphisms of period $\omega=2$ remain largely
unclassified, except for a small portion constructed in~\cite{HKZ2021} under the name of
square roots of automorphisms, which are skew morphisms $\varphi$ such that $\varphi^2$
is an automorphism.

In this paper we continue the latter path and classify the non-smooth skew morphisms
on $\mathbb{Z}_n$ of skew-type $4$; see Theorem~\ref{main1}. Since such skew morphisms
always have period $\omega=2$, as a byproduct we construct several new infinite families
of skew morphisms of period $2$, thereby making partial progress on
Problem~\ref{prob}(B). In addition, the following two theorems contain complete
information on the construction, classification, and enumeration of all skew morphisms
of skew-type $4$ on cyclic $2$-groups. We now state our main results.

\medskip
\noindent\textbf{Notation.}
For a positive integer $n$, let $\nu_2(n)$ denote the $2$-adic valuation of $n$, that is, the largest
integer $\ell$ with $2^{\ell}\mid n$. We write $n=2^{e+2}$ throughout, and we set
$q:=2^e$ when $e\ge 2$.

\begin{theorem}[Smooth case]\label{main-smooth}
The cyclic additive group $\mathbb{Z}_{2^{e+2}}$ has a smooth skew morphism of skew-type $4$
if and only if $e\geq 4$, and in that case there are precisely
\[
S(e)=2^{e+2}-32
\]
of them. Each such skew morphism is uniquely determined by a triple $(r,s,t)$ of integers with
$0\le r<2^e$, $\nu_2(r)\le e-4$, and $t$ of order $4$ modulo $2^{e-\nu_2(r)}$, via the formula
\[
\varphi(x)=x+\frac{4r\Big(\big(\sum_{i=1}^s t^{i-1}\big)^x-1\Big)}{\sum_{i=1}^s t^{i-1}-1}
\pmod{2^{e+2}},
\]
where
\begin{enumerate}[\rm(1)]
\item either $s=4r+2^{e-1}+1$ and $t=2^{e-\nu_2(r)-2}+1$ or $t=3\cdot 2^{e-\nu_2(r)-2}+1$,
\item or $s=2^{e-1}+1$ and $t=2^{e-\nu_2(r)-2}-1$ or $t=3\cdot 2^{e-\nu_2(r)-2}-1$.
\end{enumerate}
\end{theorem}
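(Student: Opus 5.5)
Let $\varphi$ be a smooth skew morphism of skew-type $4$ on $A=\mathbb{Z}_{2^{e+2}}$ with power function $\pi$. The plan is to reduce $\varphi$ to a small amount of arithmetic data and then count. First, $K=\Ker\varphi$ is the unique subgroup of index $4$, namely $K=\langle 4\rangle\cong\mathbb{Z}_{2^e}$. Because $\varphi$ is smooth, $\pi(\varphi(x))=\pi(x)$, so $\varphi$ preserves the cosets of $K$. Also $\varphi$ restricts to a group automorphism of $K$.

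The standard identities for skew morphisms give $\varphi(x+y)=\varphi(x)+\varphi^{\pi(x)}(y)$. Iterating from the generator $1$, we get
\[
\varphi(x)=\sum_{j=0}^{x-1}\varphi^{\sigma(j)}(1),
\]
where $\sigma(j)$ is a product of values of $\pi$ on $0,1,\dots,j-1$. By coset preservation these values depend only on $j \bmod 4$. The whole map is therefore governed by two pieces of data:
\begin{itemize}
\item the value $\varphi(1)=1+4r$, which must lie in $1+K$;
\item the residue $s=\pi(1)$, with $\pi$ determined on $A/K\cong\mathbb{Z}_4$ by the multiplicative rule $\pi(x+y)\equiv\sum_{i=0}^{x-1}\pi(\varphi^i(y))$, which smoothness reduces to a cocycle-type condition.
\end{itemize}
On $K$ the map acts as multiplication by some unit, and that unit is forced by $r$ and $s$.

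Writing $u:=\sum_{i=1}^{s}t^{i-1}$, the iteration collapses to a geometric sum. This produces the displayed formula
\[
\varphi(x)=x+4r\,\frac{u^x-1}{u-1}.
\]
Here $t$ is the multiplier by which $\varphi$ acts on the relevant subgroup $\langle 4r\rangle$, which has order $2^{e-\nu_2(r)}$. The fact that the skew-type is exactly $4$ (and not $1$ or $2$) should force $t$ to have order exactly $4$ modulo $2^{e-\nu_2(r)}$. The units of order $4$ in $\mathbb{Z}_{2^m}^*$ with $m\ge 4$ are precisely
\[
\pm\bigl(2^{m-2}+1\bigr),\qquad \pm\bigl(3\cdot2^{m-2}+1\bigr)
\]
up to the shape listed in the theorem. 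So we need $m=e-\nu_2(r)\ge 4$, i.e. $\nu_2(r)\le e-4$, which also explains the condition $e\ge4$.

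The next step is to impose the skew morphism axiom on the candidate formula and reduce everything to congruences modulo powers of $2$. Two requirements remain:
\begin{itemize}
\item the axiom itself, which should force $s\equiv 4r+2^{e-1}+1$ in the case $t\equiv 1\pmod 4$, and $s\equiv 2^{e-1}+1$ in the case $t\equiv -1\pmod 4$;
\item the requirement that $\pi$ takes four distinct values on the cosets, which ensures the kernel is exactly $K$.
\end{itemize}
For these I would use lifting-the-exponent estimates of the form $\nu_2(u^x-1)=\nu_2(u-1)+\nu_2(x)$ (suitably modified when $u\equiv 3\pmod 4$), and compute $|\varphi|$ as a power of $2$.

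For the converse, I would verify directly that each triple $(r,s,t)$ yields a smooth skew morphism of skew-type $4$. One efficient route is to construct the skew-product group $\mathbb{Z}_{2^{e+2}}\rtimes$-type factorization from the classification in \cite{DH2019}; alternatively, check $\varphi(x+y)=\varphi(x)+\varphi^{\pi(x)}(y)$ by direct computation. Distinct triples should give distinct maps, since they are separated by the values $\varphi(1)$ and $\varphi(2)$.

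Finally, the count. For each $k=\nu_2(r)$ with $0\le k\le e-4$ there are $2^{e-k-1}$ values of $r$ with that valuation. Each such $r$ comes with $4$ choices of $(s,t)$, so
\[
S(e)=\sum_{k=0}^{e-4}4\cdot 2^{e-k-1}=2^{e+2}-32 .
\]

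The main obstacle is the middle step. There one must show that smoothness plus skew-type $4$ forces exactly these four $(s,t)$ configurations, and in particular pin down the correction term $4r$ in $s$. That requires careful $2$-adic bookkeeping of $\pi$ along $\varphi$-orbits. I would attack it by first computing $\varphi^2$, which should be a skew morphism of skew-type dividing $2$ and hence fall under the known classification in \cite{HKZ2021}, and then lifting back to $\varphi$.
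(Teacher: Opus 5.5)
There is a genuine gap, and it starts with which parameter carries the order-$4$ condition. For smooth $\varphi$, $\pi$ is a homomorphism $\mathbb{Z}_{2^{e+2}}\to\mathbb{Z}_m^*$ with kernel $\langle4\rangle$ (Proposition~\ref{Period}(b) with $\omega=1$). So skew-type $4$ forces $\pi(1)$ to have order exactly $4$ in $\mathbb{Z}_m^*$, where $m=|\varphi|$. It says nothing of the kind about the multiplier $\mu$ with $\varphi(4)=4\mu$. In the actual solutions $\mu\equiv1+4r+2^{e-1}$ or $\mu\equiv1+2^{e-1}\pmod{2^e}$; for $e=6$, $r=1$ these are $37$ (order $16$ modulo $64$) and $33$ (order $2$). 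With your labels ($s=\pi(1)$, $t=\mu$), both claims you aim to prove, ``$t$ has order $4$ modulo $2^{e-\nu_2(r)}$'' and ``$s\equiv4r+2^{e-1}+1$'', are therefore false: the roles are reversed.

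The paper's proof uses $s$ for the multiplier on $\langle4\rangle$, $t=\pi(1)$, and the ratio $S_t=\sum_{i=1}^{t}s^{i-1}$, as in conditions (i)--(iii) of Section~\ref{sec:smooth}. The displayed formula in the statement has the summation transposed, and you followed it literally. Read literally, family (2) gives $\sum_{i=1}^{s}t^{i-1}\equiv1\pmod{2^e}$, i.e.\ the automorphism $x\mapsto(1+4r)x$. Once relabelled, your opening reduction ($\varphi(1+4y)=1+4(r+\mu y)$, then a geometric sum) is a reasonable self-contained substitute for citing Theorem~\ref{main2}. However, the collapse to a power of a single ratio silently uses $\mu^{\pi(1)-1}\equiv1\pmod{2^e}$, which comes from $\varphi(1+4)=\varphi(4+1)$ and which you never state.

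Even after relabelling, the core of the proof is only announced (``the axiom itself should force\ldots''). Write $t=\pi(1)$, $S_t=\sum_{i=1}^{t}\mu^{i-1}$ and $a=e-\nu_2(r)$. You still need the following.
First, the second constraint $\mu-1\equiv r(1+S_t)(1+S_t^2)\pmod{2^e}$, obtained by evaluating $\varphi(4)$ with the formula.
Second, $r\not\equiv0$ and $\mu\equiv1\pmod4$; by lifting the exponent these give $\nu_2\bigl(\sum_{i<m}\mu^i\bigr)=\nu_2(m)$ and hence $|\varphi|=2^{a}$. Your use of the modulus $2^{e-\nu_2(r)}$ and of $\nu_2(r)\le e-4$ rests entirely on this and is unproved.
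Third, $\mu\not\equiv1\pmod{2^e}$.
Fourth, the split by $t\bmod 4$. For $t\equiv1$ one shows $(1+S_t)(1+S_t^2)\equiv4+2^{a-1}\pmod{2^a}$, which pins down $\mu=1+4r+2^{e-1}$. For $t\equiv3$, the condition $\mu^{t-1}\equiv1$ with $\nu_2(t-1)=1$ gives $\nu_2(\mu-1)\ge e-1$, hence $=e-1$.
This is the content of Lemmas~\ref{Slemma1}, \ref{Slemma} and Theorem~\ref{Scount1}.

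Your fallback via $\varphi^2$ does not supply this. $\varphi^2$ is indeed a smooth skew morphism of skew-type $2$, but you would then have to determine which of those have smooth square roots of skew-type $4$ and count the roots. That is no easier, and it is not sketched. Hence your final count, although numerically correct, is unsupported; in particular the factor $2$ for the order-$4$ parameter presupposes counting it modulo $|\varphi|=2^{e-\nu_2(r)}$.
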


\begin{theorem}[Non-smooth case]\label{main-nonsmooth}
The cyclic additive group $\mathbb{Z}_{2^{e+2}}$ has a non-smooth skew morphism of skew-type $4$
if and only if $e\geq 3$, and in that case there are precisely
\[
N(e)=2^{e+1}+\tfrac{1}{3}\bigl(2^{2e-1}-8\bigr)
\]
of them. Each such skew morphism is uniquely determined by a quintuple $(r,s,t,u,v)$
belonging to one of the following three classes, and is given by
\[
\varphi(x)=
\begin{cases}
xt\pmod{2^{e+2}}, & x\equiv0\pmod{4},\\[2pt]
xt-t+4r+3\pmod{2^{e+2}}, & x\equiv1\pmod{4},\\[2pt]
xt-2t+4\Bigl(2r+1+(r+st)\sum\limits_{i=1}^u t^{2(i-1)}\Bigr)+2\pmod{2^{e+2}}, & x\equiv2\pmod{4},\\[2pt]
xt-3t+4s+1\pmod{2^{e+2}}, & x\equiv3\pmod{4},
\end{cases}
\]
where $0\le r,s,t<2^e$ and $u,v$ are as follows.
\begin{enumerate}[\rm(1)]
\item $t=2^{e-1}+1$, $v=2^{e-1}-1$,
\[
s-r\equiv 2^{e-2}+1\pmod{2^{e-1}}\quad\text{and}\quad
u=\begin{cases}2^{e-1}-1,&\text{$r$ even},\\ 2^e-1,&\text{$r$ odd}.\end{cases}
\]
\item $t\not\equiv 2^e-1\pmod{2^e}$, $t\equiv3\pmod{4}$, $\gamma:=\nu_2(t+1)$ with $2\le\gamma\le e-2$,
$u\in\{2^{e-\gamma-1},3\cdot 2^{e-\gamma-1}\}$, $v\equiv1\pmod{4}$ with $1\le v<2^{e-\gamma+1}$, and
\[
r=\frac{t-3}{4}+\delta 2^{e-3}+j2^{e-2},\qquad
s=\frac{3t-1}{4}+\delta 2^{e-3}-j2^{e-2}\pmod{2^e},
\]
where $j=0,1,2,3$ and $\delta\in\{1,3\}$ is determined by $t$ and $u$ as follows:
if $\gamma=2$,
\[
\delta=\begin{cases}
1,&\text{$u=2^{e-3}$ and $\frac{t+1}{4}\equiv1\pmod4$, or $u=3\cdot2^{e-3}$ and $\frac{t+1}{4}\equiv3\pmod4$,}\\
3,&\text{$u=2^{e-3}$ and $\frac{t+1}{4}\equiv3\pmod4$, or $u=3\cdot2^{e-3}$ and $\frac{t+1}{4}\equiv1\pmod4$,}
\end{cases}
\]
while if $\gamma\ge3$,
\[
\delta=\begin{cases}
3,&\text{$u=2^{e-\gamma-1}$ and $\frac{t+1}{4}\equiv1\pmod{2^\gamma}$, or $u=3\cdot2^{e-\gamma-1}$ and $\frac{t+1}{4}\equiv3\pmod{2^\gamma}$,}\\
1,&\text{$u=2^{e-\gamma-1}$ and $\frac{t+1}{4}\equiv3\pmod{2^\gamma}$, or $u=3\cdot2^{e-\gamma-1}$ and $\frac{t+1}{4}\equiv1\pmod{2^\gamma}$.}
\end{cases}
\]
\item $t=2^{e-1}-1$, $v=1$, $u\in\{1,3\}$,
\[
r=2^{e-3}(\delta+1)-1+2^{e-2}j,\qquad
s=2^{e-3}(\delta-1)+2^{e-1}-2^{e-2}j,
\]
where $j=0,1,2,3$ and
\[
\delta=\begin{cases}
1,&\text{$u=1$ and $j=0,2$, or $u=3$ and $j=1,3$,}\\
3,&\text{$u=1$ and $j=1,3$, or $u=3$ and $j=0,2$.}
\end{cases}
\]
\end{enumerate}
\end{theorem}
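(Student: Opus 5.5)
We will work in the group-factorization (skew-product) picture, so that the problem becomes one about finite groups. For a non-smooth skew morphism $\varphi$ of skew-type $4$ on $A=\mathbb{Z}_{2^{e+2}}$, let $K=\Ker\varphi=\langle 4\rangle$; this is the unique subgroup of index $4$. Form the skew-product group $G=AC$ with $C=\langle c\rangle$, $A=\langle a\rangle$, $|a|=2^{e+2}$, and $ca=\varphi(a)c^{\pi(a)}$. The first goal is to pin down the restriction of $\varphi$ to $K$. Since $K$ is the kernel, $\varphi|_K$ is a group automorphism of $K$, namely $x\mapsto xt$ for some odd $t$. We will also use the standard facts that $\varphi(K)=K$, that $\pi$ is constant on right cosets of $K$, and that $\pi(\varphi(x))\ne\pi(x)$ for some $x$ in the non-smooth case. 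The last fact forces $\varphi$ to permute the cosets $1+K$ and $3+K$ (or $2+K$ nontrivially), and since the period is $2$, $\varphi^2$ is smooth.

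Applying $\varphi(x+y)=\varphi(x)+\varphi^{\pi(x)}(y)$ with $y\in K$ gives $\varphi(x+k)=\varphi(x)+kt^{\pi(x)}$. Taking $x$ from a coset of $K$ instead gives $\varphi(k+x)=kt+\varphi(x)$. Comparing the two yields $t^{\pi(x)}\equiv t$ on $K$. Hence on each coset $i+K$ we get $\varphi(x)=xt+c_i$ for constants $c_i$. Writing $c_1=-t+4r+3$ and $c_3=-3t+4s+1$ encodes the requirement that $\varphi$ swaps the odd cosets $1+K$ and $3+K$; this is exactly the non-smooth configuration. The constant $c_2$ is then determined from $\varphi(2)=\varphi(1+1)=\varphi(1)+\varphi^{\pi(1)}(1)$. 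We will write $\pi(1)=2u+1$ (or a similar normalization), so that $\varphi^{\pi(1)}(1)$ is obtained by iterating the affine maps on $1+K$ and $3+K$ alternately. The iteration $\varphi^2|_{1+K}$ is affine with multiplier $t^2$, and this produces the geometric sum $\sum_{i=1}^u t^{2(i-1)}$ and the stated shape of the $x\equiv2$ branch. The parameter $v$ will be the power-function value on the other odd coset (or on $2+K$).

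Next we impose all the remaining compatibility conditions: the skew morphism identity for pairs of cosets $(i,j)$, the requirement that $\pi$ be well defined modulo $|\varphi|$, and the requirement that the kernel be exactly $K$. These reduce to a finite system of congruences in $r,s,t,u,v$ modulo powers of $2$. The key invariants are $\gamma=\nu_2(t+1)$ when $t\equiv3\pmod 4$, and the order of $t$ modulo $2^e$, which controls $|\varphi|$. We then split into cases.
\begin{enumerate}[(a)]
\item If $t\equiv1\pmod4$, the congruences should force $t=2^{e-1}+1$, giving class (1).
\item If $t\equiv3\pmod4$ with $2\le\gamma\le e-2$, we get class (2). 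Here we solve for $r,s$ from linear congruences, which leaves the freedom $j\in\{0,1,2,3\}$; the sign $\delta$ is pinned down by a quadratic congruence involving $(t+1)/4$ modulo $2^\gamma$.
\item If $\gamma=e-1$, i.e.\ $t=2^{e-1}-1$, we get class (3). The case $\gamma\ge e$ should be excluded by the kernel condition.
\end{enumerate}
Conversely, for each listed quintuple we verify the skew morphism identity directly on coset representatives. Distinct quintuples give distinct $\varphi$ because $t$, $r$ and $s$ can be read off from $\varphi$, while $u$ and $v$ are determined by $\pi$. Finally we count each class and sum.
\begin{itemize}
\item Class (1) contributes $2^{e-1}\cdot2$ pairs $(r,s)$.
\item Class (2) contributes $\sum_\gamma(\#t)\cdot2\cdot4\cdot 2^{e-\gamma-1}$, which is a geometric sum producing the $\frac13(2^{2e-1}-8)$ term.
\item Class (3) contributes $2\cdot4$.
\end{itemize}
These must add up to $N(e)$. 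Existence only for $e\ge3$ follows because for $e\le2$ each class is empty.

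The main obstacle will be the case $t\equiv3\pmod4$. There the congruence conditions coming from $\varphi^{\pi(x)}$, with exponents depending on $u$ and $v$, involve sums $\sum t^{2i}$ whose $2$-adic valuations must be computed precisely using lifting-the-exponent, $\nu_2\bigl(\sum_{i<u}t^{2i}\bigr)=\nu_2(u)+\nu_2(t^2-1)-3$. Getting the exact dependence of $\delta$ on $u$ and on $(t+1)/4 \bmod 2^\gamma$ right, with separate behaviour for $\gamma=2$, will require careful bookkeeping. I would first work out small cases ($e=3,4,5$) by computer to confirm the case split before doing the general computation.
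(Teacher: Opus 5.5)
\textbf{Verdict: genuine gap.} Your outline has the paper's skeleton: an affine action on the cosets of $\langle4\rangle$, a congruence system in $(r,s,t,u,v)$, a case split, and a count. But the two ingredients that do the work are missing.

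First, you never determine the power function. The statement's $v$ is not ``the power-function value on the other odd coset''. Your sketch also never explains why $\pi(1)$ is odd or how $\pi(2),\pi(3)$ depend on $u,v$, and your hedge ``or $2+K$ nontrivially'' cannot occur. The paper gets all of this from the quotient. There $\bar\varphi$ is a non-identity skew morphism of $\mathbb{Z}_n/\langle4\rangle\cong\mathbb{Z}_4$, hence the inversion (Proposition~\ref{Auto}). So $\omega=2$ and $\pi(x)=1+2\lambda(x)$, and Proposition~\ref{av-lambda} with $u=\lambda(1)$, $v=\av(1)$ yields $\pi(2)=1+2u(1+v)$, $\pi(3)=1+2u(1+v+v^2)$, $u(1+v)\not\equiv0$ and $u(1+v+v^2+v^3)\equiv0\pmod{m/2}$ (Lemma~\ref{PF}). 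Only then do the coset-pair identities collapse to the system of Theorem~\ref{main1}.

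Second, your cases (a)--(c) restate the answer rather than derive it. In the paper the classification is driven by Lemma~\ref{Nlemma1} (e.g.\ $\sigma(t^2-1)\equiv\sigma(t+v)\equiv\sigma(v-t+2)\equiv0\pmod{2^e}$ and $t\not\equiv\pm1$) and by Lemma~\ref{Nlemma2}. The latter uses the expressions for $4A$ and $4B$ in terms of $\sigma$ and $t$, with $A=s+rt$ governing $m$, to force $\nu_2(\sigma)\in\{0,e-2\}$. The three classes are read off from $\nu_2(\sigma)$ and $\nu_2(t+1)$ only after that. Nothing in your plan plays this role.

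The one valuation tool you state is also wrong. For odd $t$, $\nu_2\bigl(\sum_{i=1}^u t^{2(i-1)}\bigr)=\nu_2(u)$ (Proposition~\ref{Value}), not $\nu_2(u)+\nu_2(t^2-1)-3$. These differ whenever $t\equiv\pm1\pmod 8$, e.g.\ throughout class (2) with $\gamma\ge3$. Separately, non-existence for $e\le2$ should come from Proposition~\ref{main2-smooth}, not from emptiness of classes whose derivation assumes $e\ge3$.

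Your injectivity and counting steps also fail as written.
\begin{enumerate}
\item In class (2), $m/2=2^{e-\gamma+1}$ and $\nu_2(u)=e-\gamma-1$, so $\pi(2)$ and $\pi(3)$ depend only on $v\bmod 4$. The displayed $\varphi$ does not involve $v$ at all. Hence ``$v$ is determined by $\pi$'' is false: all $2^{e-\gamma-1}$ admissible $v$ give the same skew morphism, and that factor in your class-(2) count is spurious. Indeed $v=\av(1)=\tfrac12(\pi(1)+\pi(3))=1+u(2+v+v^2)$ forces $v=1$ there. The paper's remark that distinct quintuples ``clearly'' give distinct skew morphisms has the same defect, so you cannot borrow it.
\item Your class-(1) tally is off: $s-r\equiv2^{e-2}+1\pmod{2^{e-1}}$ has $2^{e}\cdot2=2^{e+1}$ solutions $(r,s)$, not $2^{e-1}\cdot2$. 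So your three contributions would not sum to $N(e)$.
\item The small-case check you propose is essential, because the displayed $\delta$-rule does not survive it. For $e=4$, $t=3$, $u=2$ it prescribes $\delta=1$ and $(r,s)=(2,4)$. The resulting map on $\mathbb{Z}_{64}$ is $x\mapsto 3x$ for even $x$ and $x\mapsto 3x+8$ for odd $x$. It satisfies $\varphi(x+y)=\varphi(x)+\varphi(y)$ for every even $x$, so $\langle2\rangle$ would lie in the kernel and it cannot have skew-type $4$. The valid choice is $\delta=3$, $(r,s)=(6,8)$, with $\pi(0),\pi(1),\pi(2),\pi(3)$ equal to $1,5,9,13$ modulo $16$.
\end{enumerate}

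The $\delta$ discrepancy traces to a sign slip in the paper's derivation of \eqref{ncond5}: $t-1=2(2^{\gamma-1}t'-1)$, not $2(2^{\gamma-1}t'+1)$, and this swaps the two values of $\delta$. There are two further problems in the displayed formulas. For $\gamma\ge3$ the conditions $\frac{t+1}{4}\equiv1,3\pmod{2^\gamma}$ never hold, since $\frac{t+1}{4}$ is even. In class (3) the displayed $s$ drops the $-1$ present in the paper's own derivation; e.g.\ $e=3$, $u=1$, $j=0$ gives $(r,s)=(1,4)$, which fails. So a proof along your lines must derive $\delta$, the class-(3) pairs and the class-(2) count afresh, rather than aim at the displayed formulas.
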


\begin{remark}
Combining Theorems~\ref{main-smooth} and~\ref{main-nonsmooth}, we see that $\mathbb{Z}_{2^{e+2}}$
admits a skew morphism of skew-type $4$ if and only if $e\ge 3$, and the total number of such skew
morphisms is $S(e)+N(e)$.
\end{remark}

\medskip
\noindent\textbf{Organization.}
Section~2 collects the necessary preliminaries on skew morphisms and the elementary number-theoretic
lemmas we shall use. In Section~3, working with an arbitrary positive integer $n$ divisible by $4$
(not necessarily a power of $2$), we construct and classify the non-smooth skew morphisms of
skew-type $4$ on $\mathbb{Z}_n$; see Theorem~\ref{main1}. Building on this and on the earlier
classification of smooth skew morphisms due to Bachrat\'y and Jajcay (Theorem~\ref{main2}),
Sections~4 and~5 count, respectively, the smooth and non-smooth skew morphisms of skew-type $4$ on
cyclic $2$-groups; see Theorems~\ref{Scount1}, \ref{Scount2}, \ref{Ncount1} and~\ref{Ncount2}.
Combining these with Theorems~\ref{main1} and~\ref{main2} yields Theorems~\ref{main-smooth}
and~\ref{main-nonsmooth}. We close with some remarks and open problems in Section~6.

\section{Preliminaries}\label{sec:prelim}

In this section we collect the known results and prove the technical lemmas on which the rest
of the paper relies.

\subsection{Skew morphisms}

Let $\varphi$ be a skew morphism on a group $A$ with associated power function
$\pi:A\to\mathbb{Z}_m$, where $m:=|\varphi|$ is the order of $\varphi$.
The \textit{kernel} of $\varphi$ is
\[
\Ker\varphi=\{x\in A\mid \pi(x)\equiv1\pmod m\}.
\]

\begin{proposition}[\cite{CJT2007,CJT2016,JS2002}]\label{Formula}
Let $\varphi$ be a skew morphism of a finite group $A$ with power function $\pi:A\to\mathbb{Z}_m$,
where $m:=|\varphi|$. Then the following hold:
\begin{enumerate}[\rm(a)]
\item If $A$ is nontrivial, then $\Ker\varphi$ is also nontrivial.
\item If $A$ is abelian, then $\varphi(\Ker\varphi)=\Ker\varphi$; consequently, the restriction of
$\varphi$ to $\Ker\varphi$ is an automorphism of $\Ker\varphi$.
\item For all $x,y\in A$, $\pi(x)\equiv\pi(y)\pmod m$ if and only if $xy^{-1}\in\Ker\varphi$.
\item For all $x,y\in A$, $\pi(xy)\equiv\sum_{i=0}^{\pi(x)-1}\pi(\varphi^i(y))\pmod m$.
\item For all $x,y\in A$ and every integer $k$,
$\varphi^k(xy)=\varphi^k(x)\,\varphi^{\sum_{i=0}^{k-1}\pi(\varphi^i(x))}(y)$.
\end{enumerate}
\end{proposition}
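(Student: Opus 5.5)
The statement just given, Proposition~\ref{Formula}, collects standard facts, and I would derive all five parts directly from the defining identity $\varphi(xy)=\varphi(x)\varphi^{\pi(x)}(y)$, in the order (e), (d), (c), (a), (b).

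For (e) I would induct on $k\ge 0$. The case $k=0$ is trivial. Assuming the formula for $k$, I apply $\varphi$ to $\varphi^k(x)\,\varphi^{\sigma_k(x)}(y)$, where
\[
\sigma_k(x)=\sum_{i=0}^{k-1}\pi(\varphi^i(x)).
\]
The defining identity gives $\varphi^{k+1}(x)\,\varphi^{\pi(\varphi^k(x))+\sigma_k(x)}(y)$, and since $\pi(\varphi^k(x))+\sigma_k(x)=\sigma_{k+1}(x)$ the induction closes. Negative $k$ reduce to positive ones because $\varphi^{m}=\mathrm{id}$; for this I note that $\sigma_m(x)\equiv 1\pmod m$, which follows by taking $k=m$ in (e) and varying $y$.

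For (d) I would compute $\varphi((xy)z)$ in two ways. One way gives $\varphi(xy)\,\varphi^{\pi(xy)}(z)$. The other way uses
\[
\varphi(x(yz))=\varphi(x)\,\varphi^{\pi(x)}(yz)=\varphi(x)\,\varphi^{\pi(x)}(y)\,\varphi^{\sigma_{\pi(x)}(y)}(z),
\]
where the last step is (e). Cancelling $\varphi(xy)=\varphi(x)\varphi^{\pi(x)}(y)$ leaves $\varphi^{\pi(xy)}(z)=\varphi^{\sigma_{\pi(x)}(y)}(z)$ for all $z$. Because $\pi$ is defined modulo the order $m$, this yields the congruence modulo $m$.

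For (c), taking $y=1$ in the defining identity (with $\pi(1)=1$) shows that $\Ker\varphi$ is closed under products. Indeed, (d) gives $\pi(xy)=\pi(y)$ whenever $\pi(x)=1$, and a finite closed subset containing $1$ is a subgroup. The equivalence itself I would derive from (d) applied to $x=(xy^{-1})y$. If $xy^{-1}\in\Ker\varphi$ then $\pi(x)=\pi(y)$. Conversely, if $\pi(x)=\pi(y)$, I compare $\varphi(xz)$ with $\varphi(yz)$. Writing $\varphi(xy^{-1}\cdot w)=\varphi(xy^{-1})\varphi^{\pi(xy^{-1})}(w)$ and using a uniqueness argument, this gives $\pi(xy^{-1})=1$. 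I expect this converse to be the most delicate part of (c), and I would follow the argument in \cite{JS2002}, which uses right cosets of the kernel.

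Part (a) follows from (c) by counting. The power function takes at most $m$ values on the orbit structure, and a standard argument in \cite{JS2002} shows $|\Ker\varphi|>1$: the orbit of a generating element gives $|A|\le m\cdot|\Ker\varphi|$-type bounds, and together with $m<|A|$ this forces $\Ker\varphi\ne1$. This is the main obstacle; if the counting is awkward, I would cite it directly.

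For (b) with $A$ abelian, take $x\in\Ker\varphi$ and any $y$. Then
\[
\varphi(xy)=\varphi(x)\varphi(y)=\varphi(yx)=\varphi(y)\varphi^{\pi(y)}(x),
\]
which gives $\varphi^{\pi(y)}(x)=\varphi(x)$ for all $y$. I would then use (d) with $\pi(\varphi(x))$ to show that $\varphi(x)$ again has power $1$, so $\varphi(\Ker\varphi)\subseteq\Ker\varphi$. Equality follows by finiteness, and the restriction is multiplicative by the defining identity.
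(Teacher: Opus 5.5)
Your arguments for (e), (d), the subgroup property of $\Ker\varphi$ and the forward direction of (c) are sound. Part (a), however, has a genuine gap. The bound $[A:\Ker\varphi]\le m$ is fine, since by (c) the index is the number of values of $\pi$ in $\mathbb{Z}_m$. Everything then rests on $m=|\varphi|<|A|$, which you simply assume.

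That inequality is not a counting consequence of the defining identity. $\varphi$ is a permutation of $A\setminus\{1\}$, and the l.c.m.\ of the cycle lengths of such a permutation can in general be far larger than $|A|$, so no orbit argument (``orbit of a generating element'') yields it. The standard argument, which is what the paper relies on when it cites (a) without proof, goes through the skew product group $G=L_A\langle\varphi\rangle$, where $L_A$ is the left regular representation. The identity $\varphi L_x=L_{\varphi(x)}\varphi^{\pi(x)}$ makes $G$ a group of order $|A|m$ acting faithfully and transitively on $A$. Its stabiliser of $1$ is $\langle\varphi\rangle$, which is therefore core-free. One then needs the theorem of Lucchini and of Herzog--Kaplan that a core-free cyclic subgroup $C$ of a nontrivial finite group $G$ satisfies $|C|<[G:C]$. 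So your claim that all five parts follow directly from the defining identity fails for (a). You must either import that theorem or cite $|\varphi|<|A|$ outright.

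Three smaller points. First, in (e) the orbit sum satisfies $\sigma_m(x)\equiv0\pmod m$, not $1$. From $xy=\varphi^m(xy)=x\,\varphi^{\sigma_m(x)}(y)$ you get $\varphi^{\sigma_m(x)}=\mathrm{id}$; with $1$ your reduction of negative $k$ would be off by one.

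Second, the converse in (c) is not delicate. Part (d) shows that $\pi(xw)$ depends on $x$ only through $\pi(x)$, so $\pi(x)=\pi(y)$ gives $\pi(xy^{-1})=\pi(yy^{-1})=\pi(1)=1$. Here $\pi(1)=1$ comes from putting $x=1$, not $y=1$, in the defining identity.

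Third, in (b) it is not (d) that closes the argument. Use what you already proved, $\varphi^{\pi(z)}(x)=\varphi(x)$ for all $z$, to get $\varphi^{\pi(z)}(\varphi(x))=\varphi^2(x)$. Then compare $\varphi(\varphi(x)z)=\varphi^2(x)\,\varphi^{\pi(\varphi(x))}(z)$ with $\varphi(z\varphi(x))=\varphi(z)\,\varphi^2(x)$. Commutativity gives $\varphi^{\pi(\varphi(x))}=\varphi$, that is, $\varphi(x)\in\Ker\varphi$.
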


Thus, $\pi$ takes exactly $\kappa:=[A:\Ker\varphi]$ distinct values in $\mathbb{Z}_m$; the index
$\kappa$ is called the \textit{skew-type} of $\varphi$. Skew morphisms of skew-type $1$ are precisely
the automorphisms, so skew morphisms generalise group automorphisms. Skew morphisms which are not
automorphisms are called \textit{proper}.

For any integer $i$, the set $\varphi^i(\Ker\varphi)$ is a subgroup of $A$, and the intersection
\[
\Core\varphi:=\bigcap_{i=1}^m\varphi^i(\Ker\varphi)
\]
is a $\varphi$-invariant normal subgroup of $A$, called the \textit{core} of $\varphi$~\cite{Zhang2015b}.
We may define the quotient skew morphism $\bar\varphi$ on $\bar A:=A/\Core\varphi$ by
\[
\overline{\varphi}(\bar x)=\overline{\varphi(x)},\qquad x\in A.
\]
Its power function $\bar\pi$ is determined by $\bar\pi(\bar x)\equiv\pi(x)\pmod{\bar m}$, where
$\bar m:=|\overline{\varphi}|$.

The power function $\pi$ is periodic: there exists a positive integer $k$ with
$\pi(\varphi^k(x))=\pi(x)$ for all $x\in A$. The smallest such integer is called the
\textit{period} of $\varphi$, denoted $\omega(\varphi)$ or simply $\omega$. Skew morphisms of period
$\omega=1$ are also called \textit{smooth} or \textit{coset-preserving}.

\begin{proposition}[\cite{WHYZ2019}, Theorem 4.5]\label{Period}
Let $\varphi$ be a skew morphism on $A$ of period $\omega$, and let $\overline{\varphi}$ be the
quotient skew morphism on $\overline{G}:=G/\Core\varphi$ induced by $\Core\varphi$. Then:
\begin{enumerate}[\rm(a)]
\item $\omega$ equals the order of $\overline{\varphi}$; in particular, $\omega$ is a positive divisor of $m$.
\item $\psi:=\varphi^{\omega}$ is a smooth skew morphism on $A$ whose power function
$\pi_{\psi}:A\to\mathbb{Z}_{m/\omega}^*$ is the homomorphism determined by
$\pi_{\psi}(x)=\tfrac{1}{\omega}\sum_{i=1}^{\omega}\pi(\varphi^i(x))$ for all $x\in A$.
\item $\varphi$ is smooth if and only if $\overline{\varphi}=\mathrm{id}_{\bar A}$.
\end{enumerate}
\end{proposition}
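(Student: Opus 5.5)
We prove (a) first, then deduce (c), and handle (b) last using the skew-product group. Throughout, $\pi$ is only ever read modulo $m$, and the key tool is Proposition~\ref{Formula}(c). It identifies the fibres of $\pi$ with the right cosets $(\Ker\varphi)x$. So for an integer $k$, the condition $\pi(\varphi^k(x))\equiv\pi(x)$ for all $x$ is equivalent to
\[
\varphi^k(x)x^{-1}\in\Ker\varphi\quad\text{for all }x\in A .
\]
The task is to show this is equivalent to $\varphi^k(x)x^{-1}\in\Core\varphi$ for all $x$, i.e.\ to $\overline{\varphi}^{\,k}=\mathrm{id}_{\bar A}$.

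\emph{Step 1 (Core direction).} If $\overline{\varphi}^{\,k}=\mathrm{id}$, then $\varphi^k(x)\in(\Core\varphi)x\subseteq(\Ker\varphi)x$. Hence $\pi(\varphi^k(x))=\pi(x)$ for all $x$.

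\emph{Step 2 (converse).} Suppose $\varphi^k(x)\in(\Ker\varphi)x$ for all $x$. We must upgrade this to $\varphi^k(x)\in\varphi^i(\Ker\varphi)\,x$ for every $i$. The plan is to show that $\varphi^i$ carries the partition of $A$ into right cosets of $\Ker\varphi$ onto the partition into right cosets of $\varphi^i(\Ker\varphi)$. By Proposition~\ref{Formula}(e), for $a\in\Ker\varphi$,
\[
\varphi^i(ax)=\varphi^i(a)\,\varphi^{\sigma_i(a)}(x),\qquad \sigma_i(a)=\textstyle\sum_{j<i}\pi(\varphi^j(a)).
\]
Under the hypothesis, $\varphi^k$ is constant on $\pi$-fibres and hence permutes them. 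Using this, we would then argue that $\varphi^i(\varphi^{-i}(y)\,\cdot)$ respects the coset structure. Applying the hypothesis at the point $\varphi^{-i}(x)$ then gives $\varphi^k(x)x^{-1}\in\varphi^i(\Ker\varphi)$ for every $i$, hence $\varphi^k(x)x^{-1}\in\Core\varphi$. Combining Steps~1 and~2, the set of $k$ with $\pi\circ\varphi^k=\pi$ equals the set of $k$ with $\overline{\varphi}^{\,k}=\mathrm{id}$. Therefore $\omega=|\overline{\varphi}|$, and since $\overline{\varphi}^{\,m}=\mathrm{id}$, $\omega$ divides $m$. Part (c) is the special case $\omega=1$.

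\emph{Step 3 (part (b)).} We pass to the skew-product group $G=A\langle c\rangle$ with $cx=\varphi(x)c^{\pi(x)}$ and $|c|=m$. Iterating with Proposition~\ref{Formula}(e) gives
\[
c^{\omega}x=\psi(x)\,c^{\sigma(x)},\qquad \sigma(x)=\textstyle\sum_{i=0}^{\omega-1}\pi(\varphi^i(x)).
\]
Since $\pi\circ\varphi^{\omega}=\pi$, the function $\sigma$ is constant along $\psi$-orbits. We then need $\sigma(x)\equiv 0\pmod{\omega}$. For this we use that $\Core\varphi$ corresponds to $\bigcap_g A^g$ intersected appropriately in $G$, so that $c^{\omega}$ generates the kernel of the action of $\langle c\rangle$ on the cosets of the core. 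With that, $\langle c^{\omega}\rangle$ is normalised modulo the core, which forces $c^{\sigma(x)}\in\langle c^{\omega}\rangle$. Setting $\pi_\psi(x)=\sigma(x)/\omega\in\mathbb{Z}_{m/\omega}$ gives $\psi(xy)=\psi(x)\psi^{\pi_\psi(x)}(y)$, so $\psi$ is a skew morphism. Its period is $1$ because $\pi_\psi\circ\psi=\pi_\psi$. A smooth skew morphism has power function a homomorphism into $\mathbb{Z}_{m/\omega}^*$; this follows from Proposition~\ref{Formula}(d) once $\pi_\psi$ is constant on $\psi$-orbits. Reindexing the sum from $i=1$ to $\omega$ uses $\pi(\varphi^{\omega}(x))=\pi(x)$.

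The main obstacle is Step~2 for nonabelian $A$, where $\Ker\varphi$ need not be $\varphi$-invariant, so the exponent $\sigma_i(a)$ cannot simply be set equal to $i$. If the direct coset argument fails, we would fall back on the group $G$ throughout. There, $\Core\varphi$ is the core in $G$ of $A\langle c\rangle$-related stabilisers, and $\overline{\varphi}$ is the skew morphism of the quotient action of $G$ on $G/(\Core\varphi)\langle c\rangle$. Both (a) and (c) then reduce to identifying the kernel of the action of $\langle c\rangle$ on right cosets of $\Ker\varphi$, an argument we expect to go through cleanly.
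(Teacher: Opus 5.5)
\textbf{There is a genuine gap in Step~2, which is the core of part~(a).} You do not actually prove it. You sketch a plan, concede it may fail for nonabelian $A$, and fall back on an argument in $G$ that you only ``expect to go through''.

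The mechanism you propose is also false in general. For $a\in\Ker\varphi$ the exponent $\sum_{j<i}\pi(\varphi^j(a))$ varies with $a$ once $i\ge 2$. So $\varphi^i$ need not carry right cosets of $\Ker\varphi$ onto right cosets of $\varphi^i(\Ker\varphi)$. In fact $\varphi^i(\Ker\varphi)$ need not even be a subgroup: only $\varphi|_{\Ker\varphi}$ is a homomorphism, so only $\varphi(\Ker\varphi)$ is automatically a subgroup. For example, take $S_4=AC$ with $A=\mathrm{Stab}(4)\cong S_3$, $C=\langle(1\,2\,3\,4)\rangle$, $cx=\varphi(x)c^{\pi(x)}$, and permutations composed right to left. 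Then $\Ker\varphi=\{1,(1\,2)\}$, $\varphi((1\,2))=(2\,3)$ and $\varphi((2\,3))=(1\,3\,2)$, so $\varphi^2(\Ker\varphi)=\{1,(1\,3\,2)\}$.

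For abelian $A$, which is all the paper uses, Step~2 is trivial: Proposition~\ref{Formula}(b) makes $\Ker\varphi$ $\varphi$-invariant, so $\Core\varphi=\Ker\varphi$. The statement, however, is for arbitrary $A$. Since you derive (c) and the divisibility $\omega\mid\sigma(x)$ in (b) from (a), the whole proposal rests on this unproved step.

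\textbf{How to close it.} Pull back rather than push forward, and induct.
\begin{itemize}
\item Fix $x$ and set $z:=\varphi^k(x)x^{-1}$, which lies in $\Ker\varphi$ by hypothesis.
\item Show by induction on $i\ge 0$ that $\varphi^i(z)\in\Ker\varphi$. If $\varphi^j(z)\in\Ker\varphi$ for all $j<i$, then $\sum_{j=0}^{i-1}\pi(\varphi^j(z))\equiv i\pmod m$.
\item Proposition~\ref{Formula}(e) then gives $\varphi^k(\varphi^i(x))=\varphi^i(zx)=\varphi^i(z)\varphi^i(x)$. Hence $\varphi^i(z)=\varphi^k(\varphi^i(x))\,\varphi^i(x)^{-1}$, which lies in $\Ker\varphi$ by the hypothesis applied at the point $\varphi^i(x)$.
\item Therefore $z\in\bigcap_{i}\varphi^{-i}(\Ker\varphi)=\bigcap_{i=1}^{m}\varphi^{i}(\Ker\varphi)=\Core\varphi$, which says exactly that $\overline{\varphi}^{\,k}=\mathrm{id}$.
\end{itemize}
This works for every $A$ and needs no subgroup or coset structure on $\varphi^i(\Ker\varphi)$.

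With (a) established, you can replace your normaliser argument for $\omega\mid\sigma(x)$ in Step~3 by a one-liner. Reduce $\varphi^{\omega}(xy)=\varphi^{\omega}(x)\varphi^{\sigma(x)}(y)$ modulo $\Core\varphi$ and use $\overline{\varphi}^{\,\omega}=\mathrm{id}$. This gives $\overline{\varphi}^{\,\sigma(x)}=\mathrm{id}$, so $\omega=|\overline{\varphi}|$ divides $\sigma(x)$. The remainder of your Step~3 (skew-morphism identity for $\psi$, period $1$, and the homomorphism property via Proposition~\ref{Formula}(d)) is then fine, and (c) is indeed the case $k=1$.
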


The power function $\pi_{\psi}$ of the smooth skew morphism $\psi=\varphi^{\omega}$ plays an
important role in the study of $\pi$. It is called the \textit{average function} of $\pi$, denoted
$\av:A\to\mathbb{Z}_{m/\omega}$:
\[
\av(x)=\frac{1}{\omega}\sum_{i=1}^{\omega}\pi(\varphi^i(x)),\qquad x\in A.
\]
Moreover, since $\bar\pi(\bar x)\equiv\pi(x)\pmod{\omega}$, one may define the \textit{mate function}
$\lambda:A\to\mathbb{Z}_{m/\omega}$ of $\pi$ (with respect to $\bar\pi$) by
\[
\lambda(x)=\frac{1}{\omega}\bigl(\pi(x)-\bar\pi(\bar x)\bigr),\qquad x\in A.
\]
These two functions have the following properties.

\begin{proposition}[\cite{HJ2026}]\label{av-lambda}
Let $\varphi$ be a skew morphism of a group $A$ with power function $\pi:A\to\mathbb{Z}_m$, let
$\overline{\varphi}$ be the quotient skew morphism of $\bar A:=A/\Core\pi$ induced by $\Core\pi$,
and let $\av$ and $\lambda$ be the average and mate functions of $\pi$, respectively. Then:
\begin{enumerate}[\rm(a)]
\item $\av(\varphi(x))\equiv\av(x)\pmod{m/\omega}$ for all $x\in A$.
\item $\av$ is a homomorphism from $A$ into the multiplicative group $\mathbb{Z}_{m/\omega}^*$.
\item If $\overline{\varphi}$ is an automorphism of $\bar A$, then
$\lambda(xy)=\av(y)\lambda(x)+\lambda(y)$ for all $x,y\in A$. In particular, if
$\av(x)\equiv1\pmod{m/\omega}$ for all $x\in A$, then $\lambda$ is a homomorphism from $A$ into
the additive group $\mathbb{Z}_{m/\omega}$.
\end{enumerate}
\end{proposition}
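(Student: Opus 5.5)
The plan is to reduce everything to Proposition~\ref{Period}(b), together with the summation formula of Proposition~\ref{Formula}(d) and the defining periodicity $\pi(\varphi^{\omega}(x))\equiv\pi(x)\pmod m$. Throughout, $\psi:=\varphi^{\omega}$ is the smooth skew morphism whose power function is $\av$. A sum $\sum_{i=1}^{\omega}\pi(\varphi^i(x))$ is computed in $\mathbb{Z}_m$. Whenever I divide such a quantity by $\omega$, I use the fact from Proposition~\ref{Period}(b) that it equals $\omega\,\av(x)$ with $\av(x)$ well defined modulo $m/\omega$. So any congruence modulo $m$ between such sums descends to a congruence modulo $m/\omega$ between the corresponding averages.

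For (a), I would write
\[
\omega\,\av(\varphi(x))\equiv\sum_{i=1}^{\omega}\pi(\varphi^{i+1}(x))=\sum_{i=2}^{\omega+1}\pi(\varphi^{i}(x))\pmod m .
\]
Since $\pi(\varphi^{\omega+1}(x))\equiv\pi(\varphi(x))\pmod m$ by the definition of the period, the last sum is a cyclic shift of $\sum_{i=1}^{\omega}\pi(\varphi^i(x))$. Hence it is congruent to $\omega\,\av(x)$ modulo $m$, and dividing by $\omega$ gives the claim modulo $m/\omega$. For (b), Proposition~\ref{Period}(b) already asserts that $\pi_\psi=\av$ is a homomorphism into $\mathbb{Z}_{m/\omega}^*$. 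For a self-contained check I would apply Proposition~\ref{Formula}(d) to $\psi$:
\[
\av(xy)\equiv\sum_{i=0}^{\av(x)-1}\av(\psi^i(y)) .
\]
Part (a) shows that $\av$ is constant on $\varphi$-orbits, and hence on $\psi$-orbits. So every summand equals $\av(y)$ and $\av(xy)\equiv\av(x)\av(y)$. Units follow because $\av(1)=1$ and $A$ is a finite group.

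For (c), the hypothesis that $\overline{\varphi}$ is an automorphism means that $\bar\pi\equiv1\pmod\omega$. Thus $\pi(x)\equiv1+\omega\lambda(x)\pmod m$ for every $x$. I would substitute this into Proposition~\ref{Formula}(d):
\[
\pi(xy)\equiv\sum_{i=0}^{\omega\lambda(x)}\pi(\varphi^i(y))=\pi(y)+\sum_{i=1}^{\omega\lambda(x)}\pi(\varphi^i(y))\pmod m .
\]
By periodicity, the last sum splits into $\lambda(x)$ consecutive blocks of length $\omega$. Each block is a cyclic shift of $\sum_{i=1}^{\omega}\pi(\varphi^i(y))\equiv\omega\,\av(y)$, so the sum equals $\omega\lambda(x)\av(y)$ modulo $m$. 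Comparing $1+\omega\lambda(xy)\equiv1+\omega\lambda(y)+\omega\lambda(x)\av(y)\pmod m$ and dividing by $\omega$ yields $\lambda(xy)\equiv\av(y)\lambda(x)+\lambda(y)\pmod{m/\omega}$. The special case $\av\equiv1$ then says exactly that $\lambda$ is additive.

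The main obstacle I expect is bookkeeping rather than ideas. One must make sure that the integer representatives are chosen consistently: $\pi(x)$ has to be taken as the representative $1+\omega\lambda(x)$ so that the number of summands in Proposition~\ref{Formula}(d) is $1+\omega\lambda(x)$. One must also check that the result is independent of the representative of $\lambda(x)$ modulo $m/\omega$; this holds because adding $m/\omega$ to $\lambda(x)$ adds $m/\omega$ full blocks, contributing $m\,\av(y)\equiv0$. The division by $\omega$ likewise has to be justified via Proposition~\ref{Period}(b), as explained above.
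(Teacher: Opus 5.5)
Your proof is correct. There is no in-paper argument to compare it with: the paper states this proposition without proof and cites \cite{HJ2026}.

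Your derivation is a self-contained reduction to three ingredients: Proposition~\ref{Formula}(d), Proposition~\ref{Period}(a),(b), and the defining periodicity $\pi(\varphi^{\omega}(z))\equiv\pi(z)\pmod m$. The key justifications all hold:
\begin{itemize}
\item The cancellation $\omega a\equiv\omega b\pmod m\Rightarrow a\equiv b\pmod{m/\omega}$ is valid because $\omega\mid m$, by Proposition~\ref{Period}(a).
\item The statement that $\omega\,\av(y)$ is the residue of $\sum_{j=1}^{\omega}\pi(\varphi^j(y))$ modulo $m$ is exactly Proposition~\ref{Period}(b).
\item Formula~(d) does not depend on the integer lift of $\pi(x)$. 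Adding $m$ summands contributes $\sum_{i=0}^{m-1}\pi(\varphi^i(y))\equiv 0\pmod m$, by Proposition~\ref{Formula}(e) with $k=m$. So choosing the lift $1+\omega\lambda(x)$ is legitimate.
\end{itemize}

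Three minor remarks:
\begin{itemize}
\item Part (b) is literally contained in Proposition~\ref{Period}(b), so your extra check via (a) and Formula~(d) is optional, though correct.
\item In (c) the blocks $\sum_{i=b\omega+1}^{(b+1)\omega}\pi(\varphi^i(y))$ are not merely cyclic shifts of $\sum_{j=1}^{\omega}\pi(\varphi^j(y))$. By periodicity they are termwise equal to it.
\item Keep your normalisation explicit. Identity (c) holds because $\lambda$ is defined using the lift $\bar\pi\equiv 1$, so that $\pi(x)=1+\omega\lambda(x)$; this is also how the paper uses it in Lemma~\ref{PF}. A different integer lift of $\bar\pi$ would shift $\lambda$ and break the identity.
\end{itemize}
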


For cyclic groups, the following two classical results will be used repeatedly.

\begin{proposition}[\cite{KN2011}]\label{Auto}
Every skew morphism of the cyclic group of order $n$ is an automorphism if and only if either
$n=4$ or $\gcd(n,\phi(n))=1$.
\end{proposition}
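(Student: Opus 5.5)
The plan is to prove the two directions separately. Throughout I will use the correspondence with cyclic complementary extensions from the introduction. Every skew morphism $\varphi$ of $A=\mathbb{Z}_n$ arises from a \emph{skew product group} $G=AC$ with $A\cap C=1$, $C=\langle c\rangle$ cyclic and core-free in $G$. Moreover, $\varphi$ is an automorphism exactly when $A\trianglelefteq G$.

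\textbf{Sufficiency.} Assume $\gcd(n,\phi(n))=1$, so $n=p_1\cdots p_k$ is squarefree and no $p_i$ divides $p_j-1$; the case $n=4$ is checked directly by listing all skew morphisms of $\mathbb{Z}_4$. I argue by induction on $n$, applied to the skew product group $G=AC$.

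1. By Itô's theorem, $G$, being a product of two cyclic (hence abelian) groups, is metabelian and in particular solvable.
2. By Proposition~\ref{Formula}(a),(b), $K=\Ker\varphi$ is a nontrivial $\varphi$-invariant subgroup. Take a minimal $\varphi$-invariant subgroup $P\le K$; then $P\cong\mathbb{Z}_p$ for some prime $p\mid n$. I would like $P$ to be normal in $G$. The cleanest route is to show $\Core\varphi\neq 1$, using a minimal normal subgroup $N$ of the solvable group $G$ together with a counting argument on $|G|=n|C|$.
3. Form the quotient skew morphism on $A/\Core\varphi$. By induction it is an automorphism, hence $\varphi$ has period $1$ by Proposition~\ref{Period}(c), and $\pi$ factors through $A/\Core\varphi$.
4. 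The power values lie in $\mathbb{Z}_m^*$, and $\av$ is a homomorphism $A\to\mathbb{Z}_{m/\omega}^*$ by Proposition~\ref{av-lambda}(b). I will use the arithmetic hypothesis to show that $\pi$ is constant. The restriction of $\varphi$ to $P$ lies in $\mathrm{Aut}(\mathbb{Z}_p)$, of order $p-1$. Each $p_j$ is coprime to every $p_i-1$, so a homomorphism from $A$ into such automorphism groups, or into $\mathbb{Z}_m^*$ restricted appropriately, must be trivial. This forces $\pi\equiv1$, and hence $\varphi$ is an automorphism.

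\textbf{Necessity.} Suppose $n\neq4$ and $\gcd(n,\phi(n))>1$. Then either $p^2\mid n$ for some prime $p$ (with $(p,n)\neq(2,4)$), or there are primes $p,q\mid n$ with $p\mid q-1$. In each case I will exhibit an explicit proper skew morphism.

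1. If $p\mid q-1$, write $n=qm$ and use the Kovács–Nedela direct product decomposition idea. On $\mathbb{Z}_q\times\mathbb{Z}_m\cong\mathbb{Z}_n$, let the $\mathbb{Z}_m$-coordinate select, through a homomorphism $\mathbb{Z}_m\to\mathbb{Z}_p\le\mathbb{Z}_q^*$, an automorphism power applied to the $\mathbb{Z}_q$-coordinate. Equivalently, realize $G=(\mathbb{Z}_q\rtimes\mathbb{Z}_{p})\times\mathbb{Z}_{m}$-type groups containing a cyclic subgroup of order $n$ with a non-normal cyclic core-free complement.
2. If $p^2\mid n$, write $n=p^a m$ with $a\ge2$. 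I will build a smooth skew morphism of skew-type $p$ of the form $\varphi(x)=x+\frac{n}{p}\,f(x)$, with power function $\pi(x)=1+\frac{n}{p}x$ reduced modulo the order. For $p=2$ this requires $8\mid n$ or an odd part, and I will treat $p=2$, $n=4m$ with $m$ odd separately, taking a product with a proper skew morphism when needed.

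\textbf{Main obstacle.} I expect the main obstacle to be step 2 of sufficiency: proving $\Core\varphi\neq1$, equivalently that $G$ has a nontrivial normal subgroup inside $A$. I would try first to take a minimal normal subgroup $N\cong\mathbb{Z}_r^k$ of $G$. Since $C$ is core-free, $N\not\le C$. I would then compare $r$-parts of $|A|$ and $|C|$ and use that $A$ has squarefree order, aiming to force $N\cap A\neq1$.
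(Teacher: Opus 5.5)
The paper gives no proof of this statement; it is quoted from Kov\'acs and Nedela, who argue via Schur rings over $\mathbb{Z}_n$. Your skew-product-group induction is therefore an independent route, but as written both directions have genuine gaps.

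\textbf{Sufficiency.} The step you single out as the main obstacle is actually immediate. For $x\in K=\Ker\varphi$ one has $cx=\varphi(x)c$, since $|c|=|\varphi|$ when $C$ is core-free. Hence $cKc^{-1}=\varphi(K)=K$ by Proposition~\ref{Formula}(b), so $K\trianglelefteq G$ and $\Core\varphi=K\neq1$.

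The real problems are in steps~3 and~4.
\begin{itemize}
\item[(i)] Proposition~\ref{Period}(c) characterises smoothness by $\overline{\varphi}=\mathrm{id}$, not by $\overline{\varphi}$ being an automorphism. The non-smooth skew morphisms of Theorem~\ref{main1} have quotient on $\mathbb{Z}_n/\Ker\varphi\cong\mathbb{Z}_4$ equal to inversion. They show that the inference ``$\overline{\varphi}\in\Aut$, hence period $1$'' is false.
\item[(ii)] In step~4, forcing a homomorphism $A\to\mathbb{Z}_m^*$ (or $\av$) to be trivial requires control of $\phi(m)$, i.e.\ of the primes dividing $m=|\varphi|$. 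The hypothesis $\gcd(n,\phi(n))=1$ says nothing about these directly. Your sketch never excludes a prime $r\mid m$ with $p\mid r-1$ for some $p\mid n$. Even knowing $m\mid n\phi(n)$ would not suffice: for $n=3\cdot29$ one has $7\mid\phi(n)$ and $3\mid 7-1$. The remark $\varphi|_P\in\Aut(\mathbb{Z}_p)$ carries no information about $\pi$.
\end{itemize}

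One way to finish inside your framework is as follows.
\begin{itemize}
\item[(1)] Take $N\le K$ of prime order $p$. Then $N\trianglelefteq G$, and $N$ is the Sylow $p$-subgroup of $A$.
\item[(2)] Write $\Core_G(CN)=NC_2$ with $C_2\le C$. Since $A\cap NC_2=N$, the quotient $G/NC_2$ is a skew product group for a skew morphism of $A/N\cong\mathbb{Z}_{n/p}$. Induction then gives $AC_2\trianglelefteq G$.
\item[(3)] The subgroup $C_{NC_2}(N)=N\times C_3$, with $C_3=C_{C_2}(N)$, is an abelian normal subgroup of $G$. Its $p'$-part is normal in $G$ and lies in $C$, so it is trivial.
\item[(4)] Suppose $C_3\neq1$ and let $c_0\in C_3$ have order $p$. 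Then $E=N\times\langle c_0\rangle=\Omega_1(N\times C_3)\trianglelefteq G$. Because $\gcd(n,p-1)=1$, the $p'$-group $A_{p'}$ acts trivially on $N$ and on $E/N$, hence trivially on $E$. So $c_0$ is centralised by $A=N\times A_{p'}$ and by $C$, which makes $\langle c_0\rangle$ a normal subgroup of $G$ inside $C$. This is a contradiction.
\item[(5)] Hence $C_3=1$, and $A=C_{AC_2}(N)=AC_2\cap C_G(N)\trianglelefteq G$.
\end{itemize}

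\textbf{Necessity.} This direction also needs repair.
\begin{itemize}
\item[(a)] The map $(x,y)\mapsto(\theta^{h(y)}(x),y)$ in case~1 is not a skew morphism unless $h\equiv0$. To see this, put $x'=0$ in $\varphi((x,y)+(x',y'))=\varphi(x,y)+\varphi^{k}(x',y')$.
\item[(b)] The group $(\mathbb{Z}_q\rtimes\mathbb{Z}_p)\times\mathbb{Z}_{n/q}$ does not help either: for $n=pq$ its unique cyclic subgroup of order $pq$ is normal.
\item[(c)] A correct model is $G=\langle z\rangle\times(\langle\rho\rangle\rtimes\langle\tau\rangle)\cong\mathbb{Z}_q\times(\mathbb{Z}_q\rtimes\mathbb{Z}_p)$, with $A=\langle(z,\tau)\rangle\cong\mathbb{Z}_{pq}$ non-normal and $C=\langle(z,\rho)\rangle$ core-free. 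For $n=6$ this gives a $3$-cycle on the odd residues. Extend it by the identity on a coprime cyclic factor.
\item[(d)] In case~2, the smooth skew-type-$p$ ansatz is impossible for $n=p^2$ with $p$ odd. In Theorem~\ref{main2} with $n/\kappa=p$, the order $m$ is either $p$ or a divisor of $p-1$, so $\mathbb{Z}_m^*$ has no element of order $p$. Thus every proper skew morphism of $\mathbb{Z}_{p^2}$ is non-smooth, and you need a construction of that kind there.
\end{itemize}
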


\begin{proposition}[\cite{B2024,HKK2024}]\label{main2-smooth}
Every skew morphism of the cyclic group of order $n$ is smooth if and only if $n=2^e n_1$ with
$0\le e\le 4$ and $n_1$ an odd square-free positive integer.
\end{proposition}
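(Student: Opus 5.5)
The proposition is quoted from~\cite{B2024,HKK2024}; here is how I would reprove it. The two directions are of very different nature. For necessity I would build explicit non-smooth skew morphisms. For sufficiency I would argue structurally through the quotient skew morphism $\overline{\varphi}$ on $\bar A=A/\Core\varphi$, which by Proposition~\ref{Period}(c) is trivial exactly when $\varphi$ is smooth.

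\emph{Necessity.} Suppose $n$ is not of the stated form. Then either $p^2\mid n$ for some odd prime $p$, or $2^5\mid n$. Write $n=n_0n'$ with $n_0\in\{p^2,2^5\}$ and $\gcd(n_0,n')=1$. It suffices to exhibit a non-smooth skew morphism $\varphi_0$ on $\mathbb{Z}_{n_0}$.
\begin{itemize}
\item For $n_0=p^2$, I would take one of the proper skew morphisms in the classification of~\cite{KN2017}. An example of the form $x\mapsto x+p\,f(x)$ with $\pi$ depending on $x\bmod p$ should already move the cosets of its kernel.
\item For $n_0=2^5$, I would exhibit a single example directly and verify by hand that $\pi(\varphi_0(x))\ne\pi(x)$ for some $x$.
\end{itemize}
Identifying $\mathbb{Z}_n\cong\mathbb{Z}_{n_0}\times\mathbb{Z}_{n'}$, the map $\varphi=\varphi_0\times\mathrm{id}$ is a skew morphism whose power function is $\pi_0$ on the first coordinate. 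Its kernel is $\Ker\varphi_0\times\mathbb{Z}_{n'}$, and $\pi(\varphi(x))=\pi(x)$ fails whenever it fails for $\varphi_0$. Hence $\varphi$ is non-smooth.

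\emph{Sufficiency.} Let $n=2^en_1$ with $e\le4$ and $n_1$ odd square-free, and suppose $\varphi$ is a non-smooth skew morphism on $A=\mathbb{Z}_n$, chosen with $n$ minimal.
\begin{itemize}
\item By Proposition~\ref{Period}, $\overline{\varphi}$ is a nontrivial skew morphism of $\bar A=\mathbb{Z}_{n/|\Core\varphi|}$ of order $\omega>1$. The order $n/|\Core\varphi|$ is again of the allowed form.
\item I would work in the skew product group $G=AY$, with $Y$ cyclic and core-free. A product of two cyclic groups is metabelian (Itô), so $G$ is supersolvable-like enough to control its Sylow subgroups.
\item For each odd $p\mid n_1$, the Sylow $p$-subgroup of $A$ has order $p$. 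From this I would show that $\varphi$ acts on the $p$-part as an automorphism whose power function factors through a homomorphism. Applying Proposition~\ref{av-lambda}(c) to the mate function $\lambda$ should then force $\lambda$ to be constant on $\varphi$-orbits, that is, force $\omega=1$ on that part.
\item The Kovács--Nedela decomposition peels off coprime factors whenever the gcd conditions hold. When they fail, one is left to analyse the interaction between the $2$-part and the primes $p$ with $p\mid\phi(2^e)$ or $2\mid p-1$.
\end{itemize}

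\emph{Main obstacle.} I expect the hardest step to be the $2$-part together with this coupling, since the gcd conditions for decomposition genuinely fail there. My first attempt would be to show that any non-smooth $\overline{\varphi}$ must have core-free quotient of order divisible by $p^2$ or $2^5$. The idea is that $\omega$ divides $|\overline{\varphi}|$, and the kernel of $\overline{\varphi}$ would have to be a proper non-normal-core subgroup of a cyclic group of small exponent. This would contradict the hypothesis on $n$. For the residual orders $2^e\le16$ I would fall back on the known lists of skew morphisms of $\mathbb{Z}_{2^e}$ and check directly that all of them are smooth.
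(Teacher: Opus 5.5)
The paper does not prove this proposition; it is quoted from \cite{B2024,HKK2024}. Your sketch therefore has to stand on its own, and it has genuine gaps in both directions.

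\emph{Sufficiency.} This direction carries all the content, and the sketch contains no working argument.

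The minimal-counterexample set-up gives no leverage, because non-smoothness does not pass to $\overline{\varphi}$. Typically $\overline{\varphi}$ is a nontrivial automorphism, hence smooth; in Lemma~\ref{PF} it is $x\mapsto -x$ on $\mathbb{Z}_4$. What must be excluded is $\overline{\varphi}\neq\mathrm{id}$ itself. Take the skew product group $G=AY$ and let $A_G$ be the normal core of $A$ in $G$. For cyclic $A$ one has $\Core\varphi=\Ker\varphi=A_G$, and $\varphi$ is smooth iff $A_GY\trianglelefteq G$. So some real structural input about this factorization is needed.

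Your remark that $|\bar A|$ is again of the allowed form cannot supply it. On $\mathbb{Z}_9$ the permutation $(0)(3\,6)(1\,2\,7\,5\,4\,8)$, with $\pi=1,3,5$ on the classes $0,1,2\pmod 3$, is a non-smooth skew morphism. Its quotient is the automorphism $-1$ of $\mathbb{Z}_3$, a group of allowed order. The argument must therefore exploit how $\Core\varphi$ sits inside $A$, which is where square-freeness has to enter.

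The tools you list do not do this.
\begin{enumerate}[(a)]
\item Sylow subgroups of $A$ need not be $\varphi$-invariant. On $\mathbb{Z}_{21}$, fix $\langle 3\rangle$ pointwise and send $x\mapsto x+3$ for $x\equiv 1$ and $x\mapsto x+9$ for $x\equiv 2\pmod 3$. This is a skew morphism with $\pi=1,2,4\in\mathbb{Z}_7$ on the three classes, and it sends $7$ to $10$, outside the Sylow $3$-subgroup. So your claim that ``$\varphi$ acts on the $p$-part as an automorphism'' is unfounded.
\item Proposition~\ref{av-lambda}(c) presupposes that $\overline{\varphi}$ is an automorphism, which you have not shown.
\item The Kov\'acs--Nedela decomposition already fails for $\mathbb{Z}_{21}$ (between odd primes) and for $\mathbb{Z}_6$ (between $2$ and $3$). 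Proper skew morphisms exist there by Proposition~\ref{Auto}. So the residual cases are far more than $2^e\le 16$, and a list for $\mathbb{Z}_{2^e}$ does not cover them.
\end{enumerate}
Your ``first attempt'' restates the claim rather than proving it.

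\emph{Necessity.} This direction is closer but also incomplete.

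A coprime splitting $n=n_0n'$ with $n_0\in\{p^2,2^5\}$ exists only when $p^2$ or $2^5$ exactly divides $n$. In general $n_0$ must be the full prime-power part. You therefore need non-smooth skew morphisms on every $\mathbb{Z}_{p^a}$ with $a\ge 2$ and every $\mathbb{Z}_{2^a}$ with $a\ge5$, and you construct none. The $\varphi_0\times\mathrm{id}$ step itself is fine.

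For $2^a$, Theorem~\ref{main-nonsmooth} supplies examples, and its proof does not use the present proposition.

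For $p^2$ there is a clean argument to replace the vague appeal to \cite{KN2017}:
\begin{enumerate}[(a)]
\item Proposition~\ref{Auto} gives a proper skew morphism of $\mathbb{Z}_{p^2}$, necessarily of skew-type $p$.
\item In Theorem~\ref{main2} the relevant $m$ is $1$, $p$, or a divisor of $p-1$.
\item Hence $\mathbb{Z}_m^*$ has no element of order $p$, and no proper skew morphism of $\mathbb{Z}_{p^2}$ is smooth.
\end{enumerate}
This argument breaks for $a\ge 3$. The triple $(r,s,t)=(1,1+p,1+p)$ satisfies Theorem~\ref{main2} on $\mathbb{Z}_{p^3}$ and gives a proper smooth skew morphism. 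So an explicit non-smooth family on $\mathbb{Z}_{p^a}$ for $a\ge 3$ is still owed.
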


\subsection{Elementary number theory}

\begin{proposition}[Lifting the Exponent Formula]\label{LTE}
Let $x,y\in\mathbb{Z}$ be distinct odd integers and let $n$ be a positive integer. Then:
\begin{enumerate}[\rm(a)]
\item If $2\mid(x-y)$ and $n$ is even, then
$\nu_2(x^n-y^n)=\nu_2(x-y)+\nu_2(x+y)+\nu_2(n)-1$.
\item If $2\mid(x-y)$ and $n$ is odd, then $\nu_2(x^n-y^n)=\nu_2(x-y)$.
\item If $4\mid(x-y)$, then $\nu_2(x^n-y^n)=\nu_2(x-y)+\nu_2(n)$.
\end{enumerate}
\end{proposition}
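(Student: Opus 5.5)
This is the Lifting the Exponent Lemma at $p=2$. Write $y=x+d$ with $d=x-y$ even, or work directly with factorizations. The plan is to prove (c) first, deduce (b), and then obtain (a) from (c) applied to squares. Throughout we use only that odd $x,y$ give $x\equiv y\pmod 2$, and that for odd $x$ we have $x^2\equiv1\pmod 8$.

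\emph{Case (b), $n$ odd.} Factor
\[
x^n-y^n=(x-y)\sum_{i=0}^{n-1}x^{i}y^{n-1-i}.
\]
The sum has $n$ terms, each odd, so it is odd. Hence $\nu_2(x^n-y^n)=\nu_2(x-y)$.

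\emph{Case (c), $4\mid(x-y)$.} Write $n=2^k m$ with $m$ odd. By (b), $\nu_2(x^m-y^m)=\nu_2(x-y)$, and $x^m\equiv y^m\pmod 4$ still holds. So it suffices to treat $n=2^k$, and by induction $n=2$. If $x\equiv y\pmod 4$ with both odd, then $x+y\equiv 2x\equiv2\pmod 4$, so $\nu_2(x+y)=1$. Therefore
\[
\nu_2(x^2-y^2)=\nu_2(x-y)+\nu_2(x+y)=\nu_2(x-y)+1,
\]
and $x^2\equiv y^2\pmod 4$ again, which keeps the induction going. This gives $\nu_2(x^{2^k}-y^{2^k})=\nu_2(x-y)+k$.

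\emph{Case (a), $n$ even.} Write $n=2n'$ and apply (c) to $x^2,y^2$. These are distinct-or-equal odd squares congruent to $1\pmod 8$, so $4\mid x^2-y^2$. Distinctness of $x,y$ does not guarantee $x^2\ne y^2$; if $x=-y$ the left side is infinite, which is consistent since $\nu_2(x+y)=\infty$. By (c),
\[
\nu_2(x^n-y^n)=\nu_2(x^2-y^2)+\nu_2(n')=\nu_2(x-y)+\nu_2(x+y)+\nu_2(n)-1.
\]
For this step, (c) must hold for all $n'$ and tolerate the degenerate equality case, which the proof of (c) does.

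The only delicate points are the bookkeeping in reducing (c) to powers of $2$ and the degenerate case $x=-y$ in (a). Neither is a genuine obstacle.
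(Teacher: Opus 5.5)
Your proof is correct. The paper does not prove this proposition at all: it quotes it as the standard $p=2$ Lifting the Exponent lemma and uses it without proof, so there is no argument of the paper's to compare against. Your route is the standard textbook argument:
\begin{itemize}
\item (b) follows from the factorization, since the cofactor is odd.
\item (c) follows by reducing to $n=2^k$ and iterating $\nu_2(X^2-Y^2)=\nu_2(X-Y)+\nu_2(X+Y)$, using $\nu_2(X+Y)=1$ when $X\equiv Y\pmod 4$.
\item (a) follows by applying (c) to the pair $x^2,y^2$.
\end{itemize}

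There are a few cosmetic points to tidy.
\begin{itemize}
\item Your opening sentence writes $y=x+d$ with $d=x-y$, which has the sign backwards. Since $d$ is never used, just delete that sentence.
\item In (c), say explicitly that the hypotheses persist through the induction. First, $x^m\neq y^m$ because $m$ is odd. Second, each squaring keeps the pair distinct because $\nu_2(X+Y)=1$ is finite.
\item In (a), the case $x=-y$ is cleaner to dispose of in one line: both sides equal $\infty$ under the convention $\nu_2(0)=\infty$. This is preferable to asserting that (c) ``tolerates'' $X=Y$, although with that convention your factorizations do remain valid.
\end{itemize}
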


\begin{proposition}\label{Order4}
Let $a$ be a positive integer. The multiplicative group $\mathbb{Z}_{2^a}^*$ contains an element of
order $4$ if and only if $a\ge 4$, in which case its order-$4$ elements are precisely
\[
2^{a-2}-1,\quad 2^{a-2}+1,\quad 3\cdot2^{a-2}-1,\quad 3\cdot2^{a-2}+1\pmod{2^a}.
\]
\end{proposition}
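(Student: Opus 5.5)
The statement is Proposition~\ref{Order4}, and I will prove it from the structure of $\mathbb{Z}_{2^a}^*$. For $a\le 2$ this group has order at most $2$. For $a=3$ it is $\{1,3,5,7\}$, and every element squares to $1$. So in these cases there is no element of order $4$. For $a\ge 3$, I will use the classical decomposition $\mathbb{Z}_{2^a}^*=\langle -1\rangle\times\langle 5\rangle$, where $5$ has order $2^{a-2}$. An element $\pm 5^k$ has order equal to the order of $5^k$, except when $5^k=1$, where the sign alone determines the order. Hence order $4$ is possible only if $\langle 5\rangle$ has an element of order $4$, that is, only if $2^{a-2}\ge 4$, i.e. $a\ge 4$. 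This proves the ``only if'' direction. It also shows that for $a\ge4$ there are exactly four elements of order $4$: the two elements $5^{\pm 2^{a-4}}$ of order $4$ in $\langle 5\rangle$, together with their negatives.

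Rather than identify these four elements through powers of $5$, I will write them down directly. Put $x=1+\epsilon 2^{a-2}$ with $\epsilon\in\{1,3\}$. Then
\[
x^2=1+\epsilon 2^{a-1}+\epsilon^2 2^{2a-4}\equiv 1+2^{a-1}\pmod{2^a}.
\]
This uses $2a-4\ge a$ when $a\ge4$ and the fact that $\epsilon$ is odd. So $x^2\ne1$ but $x^4=1$, and $x$ has order $4$.

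For $y=-1+\epsilon 2^{a-2}$ the same expansion gives
\[
y^2=1-\epsilon 2^{a-1}+\epsilon^2 2^{2a-4}\equiv 1+2^{a-1}\pmod{2^a},
\]
so $y$ also has order $4$. The residues $2^{a-2}\pm1$ and $3\cdot 2^{a-2}\pm1$ are pairwise distinct modulo $2^a$, since $a\ge 4$ means $2^{a-2}\ge 4>2$. Therefore they are exactly the four elements of order $4$ counted above.

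If one prefers not to quote the decomposition of $\mathbb{Z}_{2^a}^*$, the count of four can be obtained with Proposition~\ref{LTE} instead. Let $z$ be any element of order $4$. Then $z^2$ is an element of order $2$ lying in the cyclic subgroup $\langle 5\rangle$, so $z^2\equiv 1+2^{a-1}$. Writing $z=\pm1+2^b w$ with $w$ odd and $b\ge 2$, Proposition~\ref{LTE}(c) gives $\nu_2(z^2-1)=b+1$ when $z\equiv 1\pmod 4$. A similar computation holds for $z\equiv 3\pmod 4$. This forces $b=a-2$, so $z=\pm1+\epsilon 2^{a-2}$ with $\epsilon\in\{1,3\}$ modulo $2^a$.

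The only delicate point is the bookkeeping of the $2$-adic valuations at the boundary case $a=4$. There the inequality $2a-4\ge a$ holds with equality, and it still suffices.
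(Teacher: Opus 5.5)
Your proof is correct and complete. The paper gives no proof of this proposition: it is recorded in Section~\ref{sec:prelim} as a standard fact, next to Proposition~\ref{LTE}. So there is no argument of the paper's to compare with, and your write-up supplies what the paper takes for granted. Your count-then-exhibit structure suits the word ``precisely''. The decomposition $\mathbb{Z}_{2^a}^*=\langle -1\rangle\times\langle 5\rangle$ shows there are exactly four elements of order $4$ when $a\ge 4$ and none otherwise. Your computation $(\pm1+\epsilon 2^{a-2})^2\equiv 1+2^{a-1}\pmod{2^a}$, together with the distinctness check, then identifies them.

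One small remark on your optional last paragraph. It claims to avoid the decomposition, but it still uses that $z^2$ lies in the cyclic group $\langle 5\rangle$. You can make it self-contained, in the same $2$-adic style the paper uses in Lemmas~\ref{Slemma1} and~\ref{Slemma}:
\begin{enumerate}[(1)]
\item If $w^2\equiv 1\pmod{2^a}$ with $a\ge3$, one of $w\pm1$ has valuation exactly $1$, so the other has valuation at least $a-1$. Hence the square roots of $1$ modulo $2^a$ are $\pm1$ and $2^{a-1}\pm1$.
\item Odd squares are $\equiv1\pmod 8$, so no element has order $4$ when $a\le 3$.
\item For $a\ge4$, of the square roots in (1) only $1$ and $1+2^{a-1}$ are $\equiv1\pmod 8$. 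So an element $z$ of order $4$ satisfies $z^2\equiv 1+2^{a-1}\pmod{2^a}$.
\item Your step $\nu_2(z^2-1)=\nu_2(z\mp1)+1=a-1$, with the sign chosen so that $z\equiv\pm1\pmod 4$, then gives $z\equiv\pm1+\epsilon2^{a-2}$ with $\epsilon\in\{1,3\}$.
\end{enumerate}
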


\begin{proposition}\label{Value}
For any odd integer $t$ and any positive integer $h$,
\[
\nu_2\Bigl(\sum_{i=1}^h t^{2(i-1)}\Bigr)=\nu_2(h).
\]
\end{proposition}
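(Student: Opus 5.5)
The argument works directly with the sum $S_h:=\sum_{i=1}^h t^{2(i-1)}$ and uses the fact that $t^2\equiv1\pmod 8$ for every odd integer $t$. We induct on $\nu_2(h)$. Write $h=2^k m$ with $m$ odd.

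\emph{Step 1 (odd $h$).} If $k=0$, then $S_h$ is a sum of $h$ odd terms, so $S_h\equiv h\equiv 1\pmod 2$. Hence $\nu_2(S_h)=0=\nu_2(h)$.

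\emph{Step 2 (doubling).} For $h=2h'$, pair the terms:
\[
S_{2h'}=\sum_{i=1}^{h'}t^{4(i-1)}\,(1+t^2).
\]
The second sum $\sum_{i=1}^{h'}(t^2)^{2(i-1)}$ has the same shape as $S_{h'}$ with the odd base $t^2$ in place of $t$. Since the statement is claimed for every odd $t$, we can run the induction simultaneously over all odd bases. This gives
\[
\nu_2(S_{2h'})=\nu_2(1+t^2)+\nu_2(h').
\]

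\emph{Step 3 (the key point).} Because $t^2\equiv1\pmod 8$, we have $1+t^2\equiv 2\pmod 8$, so $\nu_2(1+t^2)=1$. Therefore $\nu_2(S_{2h'})=1+\nu_2(h')=\nu_2(2h')$, which completes the induction.

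As a cross-check when $t^2\ne1$, write $S_h=(t^{2h}-1)/(t^2-1)$. By Proposition~\ref{LTE}(c), applied with $4\mid t^2-1$,
\[
\nu_2(t^{2h}-1)=\nu_2(t^2-1)+\nu_2(h),
\]
and subtracting $\nu_2(t^2-1)$ gives the same result. The case $t=\pm1$ is trivial, since then $S_h=h$.

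Both arguments are elementary. The only point that needs care is to state the induction so that the base is allowed to vary (replacing $t$ by $t^2$), or else simply to use the LTE route.
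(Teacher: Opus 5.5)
Your proof is correct, and both arguments you give are complete, including the case $t=\pm1$ and the fact that $S_h>0$, so its valuation is finite.

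The paper gives no proof of this proposition; it lists it among the elementary preliminaries. Where it handles the analogous sum in Lemma~\ref{Slemma}(c), it applies Proposition~\ref{LTE} directly to get $\nu_2\bigl(\sum_{i=1}^m s^{i-1}\bigr)=\nu_2(m)$ for $s\equiv1\pmod 4$. So the intended justification is exactly your cross-check: write $S_h=(t^{2h}-1)/(t^2-1)$ and apply Proposition~\ref{LTE}(c) with $x=t^2$, $y=1$.

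Your primary argument is a genuinely different, self-contained route. Pairing terms and inducting on $\nu_2(h)$ reproves the needed special case of LTE from the single fact $\nu_2(1+t^2)=1$. The cost is that the base changes from $t$ to $t^2$, which you handle correctly by quantifying the induction hypothesis over all odd bases.

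If you prefer to keep the base fixed, split the sum into halves rather than adjacent pairs. Then $S_{2h'}=(1+t^{2h'})S_{h'}$, and $\nu_2(1+t^{2h'})=1$ because $t^{2h'}\equiv1\pmod 8$, so the same induction goes through with $t$ unchanged.
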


\section{Construction and classification}\label{sec:class}

Throughout this section $n$ denotes an arbitrary positive integer divisible by $4$ (not necessarily a
power of $2$). We construct and classify the non-smooth skew morphisms of skew-type $4$ on
$\mathbb{Z}_n$.

We begin by determining the power function of such a skew morphism.

\begin{lemma}\label{PF}
Let $\varphi$ be a non-smooth skew morphism of skew-type $4$ on the cyclic additive group
$\mathbb{Z}_n$. Then the order $m$ of $\varphi$ is even and the power function
$\pi:\mathbb{Z}_n\to\mathbb{Z}_m$ of $\varphi$ is given by
\[
\pi(x)=
\begin{cases}
1, & x\equiv0\pmod4,\\
1+2u, & x\equiv1\pmod4,\\
1+2u(1+v), & x\equiv2\pmod4,\\
1+2u(1+v+v^2), & x\equiv3\pmod4,
\end{cases}
\]
where $u\in\mathbb{Z}_{m/2}$ and $v\in\mathbb{Z}_{m/2}^*$ satisfy
\begin{enumerate}[\rm(a)]
\item $u(1+v)\not\equiv0\pmod{m/2}$,
\item $u(1+v+v^2+v^3)\equiv0\pmod{m/2}$.
\end{enumerate}
\end{lemma}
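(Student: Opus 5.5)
The plan is to pass to the quotient by the kernel and then read off $\pi$ from the average and mate functions of Proposition~\ref{av-lambda}.

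\textbf{Step 1: the kernel, the core and the quotient.} Since $\varphi$ has skew-type $4$, $\Ker\varphi$ is the unique subgroup of index $4$ in $\mathbb{Z}_n$, namely $K:=4\mathbb{Z}_n$. By Proposition~\ref{Formula}(b), $\varphi(K)=K$. Hence every $\varphi^i(K)$ equals $K$, so $\Core\varphi=K$. The quotient skew morphism $\overline{\varphi}$ therefore lives on $\mathbb{Z}_n/K\cong\mathbb{Z}_4$. By Proposition~\ref{Auto}, every skew morphism of $\mathbb{Z}_4$ is an automorphism, so $\overline{\varphi}\in\{\mathrm{id},-\mathrm{id}\}$. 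Because $\varphi$ is non-smooth, Proposition~\ref{Period}(c) rules out the identity, so $\overline{\varphi}$ is $x\mapsto -x$. Proposition~\ref{Period}(a) then gives $\omega=|\overline{\varphi}|=2$, and $\omega\mid m$ shows that $m$ is even.

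\textbf{Step 2: $\pi$ in terms of the mate function.} Since $\overline{\varphi}$ is an automorphism, its power function is $\bar\pi\equiv1\pmod 2$. The definition of the mate function then gives
\[
\pi(x)=\bar\pi(\bar x)+2\lambda(x)=1+2\lambda(x),
\]
with $\lambda:\mathbb{Z}_n\to\mathbb{Z}_{m/2}$. By Proposition~\ref{av-lambda}(b),(c), $\av$ is a homomorphism into $\mathbb{Z}_{m/2}^*$, and $\lambda(x+y)=\av(y)\lambda(x)+\lambda(y)$. Put $u:=\lambda(1)$ and $v:=\av(1)\in\mathbb{Z}_{m/2}^*$. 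Taking $y=1$ gives $\lambda(x+1)=v\lambda(x)+u$. Since $\lambda(0)=0$, induction yields
\[
\lambda(k)=u(1+v+\dots+v^{k-1}).
\]
The values $k=1,2,3$ give the displayed formula for $\pi$. The formula is consistent on residue classes because $\pi$, and hence $\lambda$ modulo $m/2$, is constant on cosets of $K$ by Proposition~\ref{Formula}(c).

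\textbf{Step 3: conditions (a) and (b).} Since $4\in K$, we have $\pi(4)=1$, so $\lambda(4)\equiv0\pmod{m/2}$. This is exactly condition (b). Since $2\notin K$, we have $\pi(2)\ne1$, so $\lambda(2)\not\equiv0$; this is condition (a).

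\textbf{Main obstacle.} The one place needing care is to check that all four cosets carry distinct values, so that the skew-type is exactly $4$ and not smaller. By Proposition~\ref{Formula}(c), distinct values on cosets mean $\lambda(k)\not\equiv0$ for $k=1,2,3$. The case $k=2$ is (a). For $k=1$: if $u\equiv0$ then $\lambda(2)\equiv0$, contradicting (a). For $k=3$: if $\lambda(3)\equiv0$, then $0\equiv\lambda(4)=v\lambda(3)+u=u$, again contradicting (a).

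Thus (a) and (b) encode exactly the requirement that $\Ker\varphi=4\mathbb{Z}_n$. This matters for the converse use of the lemma in the construction, and I expect the rest of the argument to be routine.
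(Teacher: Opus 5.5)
Your proof is correct and follows the paper's argument. You identify $\overline{\varphi}=-\mathrm{id}$ on $\mathbb{Z}_n/\langle 4\rangle\cong\mathbb{Z}_4$, write $\pi=1+2\lambda$, propagate $u=\lambda(1)$ and $v=\av(1)$ through the identity of Proposition~\ref{av-lambda}(c), and read off (b) from $\pi(4)=1$ and (a) from $\pi(2)\neq 1$. You also make explicit what the paper leaves implicit, namely $\Core\varphi=\Ker\varphi$, $\omega=2$ and hence $m$ even, and you note that (a)--(b) already force the four coset values to be distinct; these are harmless refinements.
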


\begin{proof}
Since $\varphi$ is non-smooth of skew-type $4$, the quotient skew morphism $\bar\varphi$ induced by
$\Ker\varphi=\langle 4\rangle$ is a proper skew morphism on
$\mathbb{Z}_n/\Ker\varphi\cong\mathbb{Z}_4$. By Proposition~\ref{Auto}, $\bar\varphi$ is the unique
non-identity automorphism of $\mathbb{Z}_4$, namely
$\bar\varphi=(\bar 0)(\bar 2)(\bar 1,\bar 3)$, where $\bar x=x+\Ker\varphi$.
Set $u:=\lambda(1)$ and $v:=\av(1)$. Then $\gcd(v,m/2)=1$, and by Proposition~\ref{av-lambda},
\begin{align*}
\lambda(2)&=\av(1)\lambda(1)+\lambda(1)=u(1+v),\\
\lambda(3)&=\av(2)\lambda(1)+\lambda(2)=v^2u+u(1+v)=u(1+v+v^2),\\
\lambda(4)&=\av(3)\lambda(1)+\lambda(3)=v^3u+u(1+v+v^2)=u(1+v+v^2+v^3).
\end{align*}
Since $\pi(x)=1+2\lambda(x)$, we obtain the stated values of $\pi(1),\pi(2),\pi(3),\pi(4)$;
Proposition~\ref{Formula}(c) then gives $\pi(x)$ for all $x\in\mathbb{Z}_n$. Finally,
\[
1=\pi(0)=\pi(4)=1+2u(1+v+v^2+v^3)\pmod m
\]
forces $u(1+v+v^2+v^3)\equiv0\pmod{m/2}$; and since $\pi(1),\pi(2),\pi(3),\pi(4)$ are pairwise
distinct modulo $m$, we also have $u(1+v)\not\equiv0\pmod{m/2}$.
\end{proof}

We now construct all such skew morphisms.

\begin{theorem}\label{main1}
Let $n$ be a positive integer divisible by $4$. The non-smooth skew morphisms $\varphi$ of
skew-type $4$ on $\mathbb{Z}_n$ are in bijective correspondence with the quintuples $(r,s,t,u,v)$ of integers with
\[
0\le r,s,t<n/4,\qquad 1<u,v<m/2,\qquad \gcd(n/4,t)=1,\quad\gcd(m/2,v)=1,
\]
satisfying the system
\begin{enumerate}[\rm(a)]
\item $u(1+v)\not\equiv0\pmod{m/2}$,
\item $u(1+v+v^2+v^3)\equiv0\pmod{m/2}$,
\item $t^{2u}\equiv1\pmod{n/4}$,
\item $\sigma(tv+1)\equiv0\pmod{n/4}$,
\item $\sigma t^2\equiv\sigma v^2\pmod{n/4}$,
\item $\sigma(v+1)v\equiv\sigma(t-1)\pmod{n/4}$,
\item $\sigma(1+v)\equiv\sigma(1-t)t\pmod{n/4}$,
\item $\sigma t\equiv-r-s+t-1\pmod{n/4}$,
\item $\sigma(1-t^2)\equiv2s-2r-t-1\pmod{n/4}$,
\item $\sigma(1+v)\equiv(\sigma+2r+1)\sum\limits_{i=1}^{2u}t^{i-1}\pmod{n/4}$,
\end{enumerate}
where $\sigma=(r+st)\sum_{i=1}^u t^{2(i-1)}$, and $m$ is the smallest even positive integer with
\[
m(rt+s)\sum_{i=1}^d t^{2(i-1)}\equiv0\pmod{n/4}.
\]
Moreover, if $(r,s,t,u,v)$ is such a solution, then $\varphi$ and $\pi$ are given by
\begin{equation}\label{Skew}
\begin{cases}
\varphi(4i)=4ti,\\
\varphi(4i+1)=4ti+4r+3,\\
\varphi(4i+2)=4ti+4(\sigma+2r+1)+2,\\
\varphi(4i+3)=4ti+4s+1,
\end{cases}
\quad
\begin{cases}
\pi(4i)=1,\\
\pi(4i+1)=1+2u,\\
\pi(4i+2)=1+2u(1+v),\\
\pi(4i+3)=1+2u(1+v+v^2).
\end{cases}
\end{equation}
\end{theorem}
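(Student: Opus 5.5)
Both directions of the correspondence are proved by reducing the skew morphism identity to finitely many congruences modulo $n/4$ on the kernel $K=\Ker\varphi=\langle 4\rangle\cong\mathbb{Z}_{n/4}$.

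\emph{Necessity.} Let $\varphi$ be a non-smooth skew morphism of skew-type $4$. Lemma~\ref{PF} gives the power function $\pi$ in the form \eqref{Skew} together with conditions (a) and (b). By Proposition~\ref{Formula}(b), $\varphi$ restricts to an automorphism of $K$. Hence $\varphi(4i)=4ti$ for a unit $t$ modulo $n/4$.

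In the proof of Lemma~\ref{PF}, $\bar\varphi$ swaps $\bar1$ and $\bar3$ and fixes $\bar2$. We may therefore write
\[
\varphi(1)=4r+3,\qquad \varphi(3)=4s+1,\qquad \varphi(2)=4w+2 .
\]
Applying $\varphi(x+y)=\varphi(x)+\varphi^{\pi(x)}(y)$ with $x\in K$ (so $\pi(x)=1$) gives $\varphi(4i+j)=4ti+\varphi(j)$. This already yields the displayed form of $\varphi$ once $w$ is identified.

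\emph{Determining $w$.} Write $2=1+1$, so that $\varphi(2)=\varphi(1)+\varphi^{1+2u}(1)$. To compute $\varphi^{k}(1)$ we track orbits: $\varphi^2$ maps $1+K$ to itself via
\[
4i+1\mapsto 4t^2i+4(rt+s)+1 .
\]
This is an affine map, so $\varphi^{2u}(1)=1+4(rt+s)\sum_{i=1}^{u}t^{2(i-1)}$. Up to the obvious reindexing of $r$ and $s$, this quantity is where $\sigma$ enters. We then obtain $\varphi^{1+2u}(1)$ and hence $w=\sigma+2r+1$.

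Similarly, the order $m$ of $\varphi$ is the least even $m$ such that $\varphi^m$ is the identity on every coset. On $K$ and on $1+K$, $3+K$ this requires $t^{m}\equiv1$ together with the orbit-sum condition. I will check that this matches the stated minimality condition for $m$, reading the symbol $d$ there as the appropriate half-exponent.

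\emph{Sufficiency.} Conversely, given $(r,s,t,u,v)$, define $\varphi$ and $\pi$ by \eqref{Skew}. Two facts are needed:
\begin{itemize}
\item $\varphi$ is a bijection of order $m$. This holds because $t$ is a unit, the cosets are permuted, and $\varphi^m=\mathrm{id}$ by the choice of $m$.
\item $\varphi(x+y)=\varphi(x)+\varphi^{\pi(x)}(y)$ for all $x,y$.
\end{itemize}
For the second, translation-equivariance by $K$ reduces the check to $x\in\{1,2,3\}$ and $y\in\{0,1,2,3\}$, the case $x\in K$ being automatic. It suffices to verify the generator relations $\varphi(1+y)=\varphi(1)+\varphi^{1+2u}(y)$ for each residue $y$. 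By Proposition~\ref{Formula}(d), the identity for general $x$ then follows from that for $x=1$ by induction, provided $\pi$ satisfies the cocycle rule. That rule amounts to (a), (b) together with $\pi(\varphi(x))$ being compatible, which follows from the $\lambda$, $\av$ description in Proposition~\ref{av-lambda}.

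Each of the four residue checks requires an explicit formula for $\varphi^{1+2u}$ on each coset. This again comes from iterating the affine maps $\varphi^2$ on cosets:
\begin{itemize}
\item On $K$ and $2+K$, the map is multiplication by $t^2$ plus a translation, which is where (c), $t^{2u}\equiv1$, appears.
\item On $1+K$ and $3+K$, $\varphi^{2u}$ acts by the translation $4\sigma$ composed with multiplication by $t^{2u}$.
\end{itemize}
Comparing constant terms modulo $n/4$ in the four identities, and also in the analogous identities for $x=2,3$ needed to close the induction, should yield exactly conditions (d)–(j). The coefficient of $i$ is handled by (c), and the condition $\pi(3+1)=\pi(0)$ is (b).

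\emph{Main obstacle.} The hard part is the bookkeeping in these iterated-affine-map computations: correctly expressing $\varphi^{k}$ on each coset for $k=1+2u$, $1+2u(1+v)$ and $1+2u(1+v+v^2)$ modulo $m$. One must also show that the resulting congruences reduce to precisely the ten listed conditions, none redundant or missing. I would first do the $x=1$ identities, which should give (h), (i) and (j). Then I would derive the $x=2,3$ ones using $v$, where exponents like $2uv$ appear and produce (d)–(g) via $\sum t^{i-1}$ identities.

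Finally, bijectivity of the correspondence follows because $(r,s,t)$ are read off from $\varphi(1)$, $\varphi(3)$ and $\varphi(4)$, while $(u,v)$ are read off from $\pi$ via $\lambda(1)$ and $\av(1)$.
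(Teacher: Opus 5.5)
The overall route is the paper's own: parametrize by $\varphi(1)=4r+3$, $\varphi(3)=4s+1$, $\varphi(4)=4t$, iterate the affine maps on the cosets of $\langle 4\rangle$, and match constants in $\varphi(x+y)=\varphi(x)+\varphi^{\pi(x)}(y)$. The genuine gap is in your final bijectivity step, and in fact the bijection as stated is false.

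\textbf{The bijectivity step fails.} Reading $(u,v)$ off as $(\lambda(1),\av(1))$ only shows that the map $\varphi\mapsto(r,s,t,\lambda(1),\av(1))$ is injective. For a bijection onto the solution set you also need this: for every solution of (a)--(j), the skew morphism built from \eqref{Skew} returns the same $v$. That means
\[
v\equiv\av(1)=\tfrac12\bigl(\pi(1)+\pi(3)\bigr)=1+u(2+v+v^2)\pmod{m/2}.
\]
This congruence is not among (a)--(j) and does not follow from them. Indeed, \eqref{Skew} does not involve $v$ at all.

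Here is a counterexample. Take $n=128$ and $(r,s,t,u)=(12,14,3,4)$.
\begin{itemize}
\item $rt+s\equiv 18\pmod{32}$, so $m=32$.
\item $\sigma=54\cdot 820\equiv 24\pmod{32}$.
\item Since $24X\equiv0\pmod{32}$ iff $4\mid X$, conditions (d)--(g) hold for every $v\equiv1\pmod4$.
\item (c) is $3^8\equiv1\pmod{32}$, and (h) reads $72\equiv-24$.
\item Both sides of (i) vanish.
\item In (j) both sides are $\equiv16$, because $\sum_{i=1}^{8}3^{i-1}=3280\equiv16$ and $\sigma+2r+1$ is odd.
\item (a) and (b) hold since $u=4$, $1+v\equiv2\pmod4$ and $1+v^2$ is even.
\end{itemize}
Thus $(12,14,3,4,v)$ is a solution for $v=5,9,13$, and also for $v=1$ under the bounds $1\le u,v<m/2$ used in Section~\ref{sec:nonsmooth}. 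All of these give one and the same $\varphi$, and even the same $\pi\equiv(1,9,17,25)\pmod{32}$. Its actual $\av(1)=\tfrac12(9+25)\equiv1\pmod{16}$.

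The paper's sentence ``distinct quintuples clearly give distinct skew morphisms'' has the same hole. The $k/4$ values of $v$ counted in family (2) of Theorem~\ref{Ncount2} all yield the same skew morphism. What your reading-off argument does prove, once sufficiency is settled, is a bijection with those solutions of (a)--(j) that also satisfy $v\equiv1+u(2+v+v^2)\pmod{m/2}$.

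\textbf{Two smaller points in your sufficiency part.}
\begin{itemize}
\item Proposition~\ref{Formula}(d) and Proposition~\ref{av-lambda} presuppose that $\varphi$ is already a skew morphism. You cannot use them to show that the candidate $\pi$ obeys the cocycle rule. Drop the reduction to $x=1$ and verify the identity directly for $x\equiv1,2,3\pmod4$ and all residues of $y$, as the paper does in Cases A--D.
\item ``$\varphi^m=\mathrm{id}$ by the choice of $m$'' needs an argument, because the minimality condition only controls the orbit of $1$. Once the identity holds, Proposition~\ref{Formula}(e) with $x=1$, $k=m$ gives $\varphi^m(1+y)=1+\varphi^{c}(y)$, where $c=\tfrac m2\bigl(\pi(1)+\pi(3)\bigr)$ is a multiple of $m$. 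Induction on $y$ then yields $\varphi^m=\mathrm{id}$ and $|\varphi|=m$. This is what makes (a) and (b) the correct distinctness conditions for skew-type $4$.
\end{itemize}
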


\begin{proof}
We first derive the system (a)--(j) from the skew morphism axioms, and then verify the converse.

\medskip\noindent\textit{Necessity.}
Suppose $\varphi$ is a non-smooth skew morphism of skew-type $4$ on $\mathbb{Z}_n$. Then
$\Ker\varphi=\langle 4\rangle$, so $n\ge 8$ is divisible by $4$, and the induced skew morphism
$\overline{\varphi}$ on $\mathbb{Z}_n/\Ker\varphi$ is a non-identity skew morphism on
$\mathbb{Z}_4$. By Proposition~\ref{Auto},
$\overline{\varphi}=(\bar 0)(\bar 2)(\bar 1,\bar 3)$. By Proposition~\ref{Formula}(b) the
restriction of $\varphi$ to $\Ker\varphi$ is an automorphism, so we may write
\[
\varphi(1)=4r+3,\qquad\varphi(3)=4s+1,\qquad\varphi(4)=4t,\qquad r,s,t\in\mathbb{Z}_{n/4}.
\]
By Lemma~\ref{PF}, $m:=|\varphi|$ is even and the power function takes the stated form, with
$u\in\mathbb{Z}_{m/2}$ and $v\in\mathbb{Z}_{m/2}^*$ satisfying (a) and (b).

By induction on $k\ge 1$,
\begin{align}
\varphi^k(1)&=4(rt^{k-1}+st^{k-2}+\cdots+st+r)+3, && \text{$k$ odd,}\\
\varphi^k(1)&=4(rt^{k-1}+st^{k-2}+\cdots+rt+s)+1, && \text{$k$ even.}
\end{align}
Since $\varphi(1)+\varphi(4)=\varphi(4)+\varphi(1)=\varphi(1)+\varphi^{2u+1}(4)$, we get
$t^{2u}\equiv1\pmod{n/4}$, which is condition (c). Using this and the above formulae,
\begin{align*}
\varphi^{2u+1}(1)&=4(r+\sigma)+3,\\
\varphi^{2u(1+v)+1}(1)&=4(r+\sigma(1+v))+3,\\
\varphi^{2u(1+v+v^2)+1}(1)&=4(r+\sigma(1+v+v^2))+3,
\end{align*}
where $\sigma=st^{2u-1}+rt^{2u-2}+\cdots+st+r$. Hence
\[
\varphi(2)=\varphi(1)+\varphi^{\pi(1)}(1)=\varphi(1)+\varphi^{2u+1}(1)=4(\sigma+2r+1)+2,
\]
and therefore
\[
\varphi(4i)=4it,\quad\varphi(4i+1)=4it+4r+3,\quad
\varphi(4i+2)=4it+4(\sigma+2r+1)+2,\quad\varphi(4i+3)=4it+4s+1.
\]
A similar computation gives
\begin{align*}
\varphi^{2u+1}(2)&=4(\sigma+2r+1)(\tau+1)+2,\\
\varphi^{2u(1+v)+1}(2)&=4(\sigma+2r+1)\bigl((1+v)\tau+1\bigr)+2,\\
\varphi^{2u(1+v+v^2)+1}(2)&=4(\sigma+2r+1)\bigl((1+v+v^2)\tau+1\bigr)+2,\\
\varphi^{2u+1}(3)&=4(\sigma t+s)+1,\\
\varphi^{2u(1+v)+1}(3)&=4(\sigma t(1+v)+s)+1,\\
\varphi^{2u(1+v+v^2)+1}(3)&=4(\sigma t(1+v+v^2)+s)+1,
\end{align*}
where $\tau:=\sum_{j=1}^{2u}t^{j-1}$.

We now extract the congruences (d)--(j) by expanding
$\varphi(x+y)=\varphi(x)+\varphi^{\pi(x)}(y)$ for the sixteen ordered pairs $(x,y)$ with
$x,y\in\{1,2,3,4\}$. We group the computations by the residue class of $x$.

\smallskip
\emph{Pairs starting with $x=1$.}
From $4s+1=\varphi(1+2)=\varphi(1)+\varphi^{2u+1}(2)$ and
$4t=\varphi(1+3)=\varphi(1)+\varphi^{2u+1}(3)$ we obtain (h) together with
\begin{equation}\label{cond2}
s-r-1\equiv(\sigma+2r+1)(\tau+1)\pmod{n/4}.
\end{equation}

\smallskip
\emph{Pairs starting with $x=2$.}
Comparing $\varphi(2+y)=\varphi(2)+\varphi^{2u(1+v)+1}(y)$ for $y=1,2,3$ gives
\begin{align}
\sigma(1+v)&\equiv s-r-1-(\sigma+2r+1)\pmod{n/4},\label{cond4}\\
t-1&\equiv(\sigma+2r+1)(1+v)\tau+2(\sigma+2r+1)\pmod{n/4},\label{cond5}\\
\sigma(1+v)t&\equiv-s-r+t-1-\sigma\pmod{n/4}.\label{cond6}
\end{align}

\smallskip
\emph{Pairs starting with $x=3$.}
Comparing $\varphi(3+y)=\varphi(3)+\varphi^{2u(1+v+v^2)+1}(y)$ for $y=1,2,3$ gives
\begin{align}
t-r-s-1&\equiv\sigma(1+v+v^2)\pmod{n/4},\label{cond7}\\
t+r-s&\equiv(\sigma+2r+1)\bigl((1+v+v^2)\tau+1\bigr)\pmod{n/4},\label{cond8}\\
t-2s&\equiv\sigma t(1+v+v^2)-(\sigma+2r+1)\pmod{n/4}.\label{cond9}
\end{align}

\smallskip
\emph{Reduction to (a)--(j).}
Equating \eqref{cond2} with \eqref{cond4} gives (j). Equating (h) with \eqref{cond7} gives
\begin{equation}\label{tem1}
\sigma(1+v+v^2)\equiv\sigma t\pmod{n/4}.
\end{equation}
Note that \eqref{cond8} follows from (j), \eqref{cond6} and \eqref{tem1}, so \eqref{cond8} is
redundant. Inserting (j) into \eqref{cond5} gives
$t-1\equiv2(\sigma+2r+1)+\sigma(1+v)^2\pmod{n/4}$, which together with \eqref{cond4} gives
\begin{equation}\label{tem2}
\sigma(1+v)(1-v)\equiv2s-2r-t-1\pmod{n/4}.
\end{equation}
Inserting \eqref{tem1} into \eqref{cond9} gives (i). Subtracting \eqref{cond4} from (i) gives
$\sigma t^2+\sigma(1+v)=t-r-s-1$, which combined with (h) gives (g). Comparing \eqref{tem2}
with (i) yields
$\sigma(1+v)(1-v)\equiv\sigma(1-t^2)\equiv\sigma(1-t)(1+t)\pmod{n/4}$, which together with (g)
gives (f). Combining (f) with \eqref{tem1} gives
\[
\sigma t^2=\sigma(1+v+v^2)t=\sigma(1-t)t^2+\sigma tv^2=\sigma t^2-\sigma t^3+\sigma tv^2\pmod{n/4},
\]
which reduces to (e) since $\gcd(t,n/4)=1$. Finally, comparing (h) with \eqref{cond6} gives (d).
This completes the derivation of (a)--(j).

\medskip\noindent\textit{Sufficiency.}
Conversely, suppose $(r,s,t,u,v)$ satisfies (a)--(j). Define $\varphi$ and $\pi$ by \eqref{Skew}.
By induction on $k$,
\[
\varphi^{2k+1}(4i+1)=4(it^{2k+1}+rt^{2k}+st^{2k-1}+\cdots+st+r)+3,
\]
so
\begin{align}
\varphi^{2u+1}(4i+1)&=4(it+r+\sigma)+3,\label{power11}\\
\varphi^{2u(1+v)+1}(4i+1)&=4(it+r+\sigma(1+v))+3,\label{power12}\\
\varphi^{2u(1+v+v^2)+1}(4i+1)&=4(it+r+\sigma(1+v+v^2))+3.\label{power13}
\end{align}
Similarly,
\[
\varphi^{2k+1}(4i+3)=4(it^{2k+1}+st^{2k}+rt^{2k-1}+\cdots+rt+s)+1,
\]
hence
\begin{align}
\varphi^{2u+1}(4i+3)&=4(it+\sigma t+s)+1,\label{power21}\\
\varphi^{2u(1+v)+1}(4i+3)&=4(it+\sigma t(1+v)+s)+1,\label{power22}\\
\varphi^{2u(1+v+v^2)+1}(4i+3)&=4(it+\sigma t(1+v+v^2)+s)+1.\label{power23}
\end{align}
Finally,
\[
\varphi^{2k+1}(4i+2)=4\Bigl(it^{2k+1}+(\sigma+2r+1)\sum_{j=1}^{2k+1}t^{j-1}\Bigr)+2,
\]
so
\begin{align}
\varphi^{2u+1}(4i+2)&=4\bigl(it+(\sigma+2r+1)(\tau+1)\bigr)+2,\label{power31}\\
\varphi^{2u(1+v)+1}(4i+2)&=4\bigl(it+(\sigma+2r+1)((1+v)\tau+1)\bigr)+2,\label{power32}\\
\varphi^{2u(1+v+v^2)+1}(4i+2)&=4\bigl(it+(\sigma+2r+1)((1+v+v^2)\tau+1)\bigr)+2.\label{power33}
\end{align}
We verify $\varphi(x+y)=\varphi(x)+\varphi^{\pi(x)}(y)$ for representatives of the four residue
classes of $x$ modulo $4$; the verification splits into the following four cases.

\begin{case}[A: $x\equiv0\pmod4$]
For $y=4j,4j+1,4j+2,4j+3$ respectively,
\begin{align*}
\varphi(4i)+\varphi(4j)&=4(i+j)t=\varphi(4i+4j),\\
\varphi(4i)+\varphi(4j+1)&=4(i+j)t+4r+3=\varphi(4i+(4j+1)),\\
\varphi(4i)+\varphi(4j+2)&=4(i+j)t+4(\sigma+2r+1)+2=\varphi(4i+(4j+2)),\\
\varphi(4i)+\varphi(4j+3)&=4(i+j)t+4s+1=\varphi(4i+(4j+3)).
\end{align*}
\end{case}

\begin{case}[B: $x\equiv1\pmod4$]
Using $\pi(4i+1)\equiv2u+1$ and \eqref{power11}--\eqref{power13},
\begin{align*}
\varphi(4i+1)+\varphi^{2u+1}(4j)&=4(i+j)t+4r+3=\varphi((4i+1)+4j),\\
\varphi(4i+1)+\varphi^{2u+1}(4j+1)
&=(4it+4r+3)+(4jt+4r+4\sigma+3)\\
&=4(i+j)t+4(\sigma+2r+1)+2=\varphi((4i+1)+(4j+1)),\\
\varphi(4i+1)+\varphi^{2u+1}(4j+2)
&=(4it+4r+3)+(4jt+4(\sigma+2r+1)(\tau+1)+2)\\
&=4(i+j)t+4s+1=\varphi((4i+1)+(4j+2)),\\
\varphi(4i+1)+\varphi^{2u+1}(4j+3)
&=(4it+4r+3)+(4jt+4\sigma t+4s+1)\\
&\equiv4(i+j+1)t=\varphi((4i+1)+(4j+3)).
\end{align*}
\end{case}

\begin{case}[C: $x\equiv2\pmod4$]
Using $\pi(4i+2)\equiv2u(1+v)+1$ and \eqref{power21}--\eqref{power23},
\begin{align*}
\varphi(4i+2)+\varphi^{2u(1+v)+1}(4j)&=4(i+j)t+4(\sigma+2r+1)+2=\varphi((4i+2)+4j),\\
\varphi(4i+2)+\varphi^{2u(1+v)+1}(4j+1)
&=4(i+j)t+4s+1=\varphi((4i+2)+(4j+1)),\\
\varphi(4i+2)+\varphi^{2u(1+v)+1}(4j+2)
&=4(i+j)t+4(\sigma+2r+1)\bigl(2+(1+v)\tau\bigr)+4\\
&=\varphi((4i+2)+(4j+2)),\\
\varphi(4i+2)+\varphi^{2u(1+v)+1}(4j+3)
&=4(i+j+1)t+4r+3=\varphi((4i+2)+(4j+3)).
\end{align*}
\end{case}

\begin{case}[D: $x\equiv3\pmod4$]
Using $\pi(4i+3)\equiv2u(1+v+v^2)+1$ and \eqref{power21}--\eqref{power23},
\begin{align*}
\varphi(4i+3)+\varphi^{2u(1+v+v^2)+1}(4j)&=4(i+j)t+4s+1=\varphi((4i+3)+4j),\\
\varphi(4i+3)+\varphi^{2u(1+v+v^2)+1}(4j+1)
&=4(i+j+1)t=\varphi((4i+3)+(4j+1)),\\
\varphi(4i+3)+\varphi^{2u(1+v+v^2)+1}(4j+2)
&=4(i+j)t+4(t+r)+3=\varphi((4i+3)+(4j+2)),\\
\varphi(4i+3)+\varphi^{2u(1+v+v^2)+1}(4j+3)
&=4(i+j+1)t+4(\sigma+2r+1)+2=\varphi((4i+3)+(4j+3)).
\end{align*}
\end{case}

Thus $\varphi$ is a skew morphism on $\mathbb{Z}_n$ with power function $\pi$, and by construction
its skew-type is $4$. Distinct quintuples $(r,s,t,u,v)$ clearly give distinct skew morphisms, so the
correspondence is bijective.
\end{proof}

\section{Counting the smooth skew morphisms}\label{sec:smooth}

In this section we count the smooth skew morphisms of skew-type $4$ on the cyclic additive
$2$-groups.

\medskip\noindent\textit{Strategy.}
By Bachrat\'y and Jajcay's classification of smooth skew morphisms on cyclic groups, counting the
smooth skew morphisms of skew-type $4$ on $\mathbb{Z}_{2^{e+2}}$ reduces to solving a system of
three congruences in the integer triple $(r,s,t)$. We first extract necessary conditions on the
$2$-adic valuations of $r$ and $s-1$ (Lemma~\ref{Slemma}), then classify the solutions into two
families (Theorem~\ref{Scount1}) and finally count them (Theorem~\ref{Scount2}).

Recall the classical classification of smooth skew morphisms on cyclic groups due to
Bachrat\'y and Jajcay~\cite{BJ2017}; see~\cite{HNWY2019} for a refined version.

\begin{theorem}[\cite{BJ2017}]\label{main2}
Let $n>1$ and let $\kappa>1$ be a proper divisor of $n$. The cyclic additive group $\mathbb{Z}_n$
admits a proper smooth skew morphism of skew-type $\kappa$ if and only if the system
\begin{enumerate}[\rm(a)]
\item $t$ has multiplicative order $\kappa$ in $\mathbb{Z}_m$,
\item $s^{t-1}\equiv1\pmod{n/\kappa}$,
\item $s-1\equiv r\dfrac{\bigl(\sum_{i=1}^t s^{i-1}\bigr)^{\kappa}-1}{\sum_{i=1}^t s^{i-1}-1}\pmod{n/\kappa}$,
\end{enumerate}
has an integer solution $(r,s,t)$ with $r,s\in\mathbb{Z}_{n/\kappa}$ and $t\in\mathbb{Z}_m^*$, where
$m$ is the smallest positive integer with
\[
r\sum_{i=1}^m s^{i-1}\equiv0\pmod{n/\kappa}.
\]
The corresponding skew morphism $\varphi$ and its power function $\pi:\mathbb{Z}_n\to\mathbb{Z}_m$
are given by
\begin{equation}\label{ZSM}
\varphi(x)\equiv x+\frac{r\kappa\Bigl(\bigl(\sum_{i=1}^s t^{i-1}\bigr)^x-1\Bigr)}
{\sum_{i=1}^s t^{i-1}-1}\pmod n,
\qquad
\pi(x)\equiv t^x\pmod m.
\end{equation}
\end{theorem}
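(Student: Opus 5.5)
Since $\varphi$ is smooth, its period is $1$, so by Proposition~\ref{Period}(b) the power function $\pi$ is itself a homomorphism from $\mathbb{Z}_n$ into $\mathbb{Z}_m^*$, and it is constant on the orbits of $\varphi$. On a cyclic group this forces $\pi(x)\equiv t^x\pmod m$, where $t:=\pi(1)\in\mathbb{Z}_m^*$. By Proposition~\ref{Formula}(c) the kernel of $\pi$ is exactly $\Ker\varphi$, which has index $\kappa$. Hence $t$ has multiplicative order exactly $\kappa$ in $\mathbb{Z}_m^*$, giving condition (a), and $\Ker\varphi=\langle\kappa\rangle$.

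By Proposition~\ref{Formula}(b), $\varphi$ restricts to an automorphism of $\langle\kappa\rangle\cong\mathbb{Z}_{n/\kappa}$. I will therefore parametrise $\varphi(\kappa y)=\kappa sy$ with $s$ a unit modulo $n/\kappa$. Since $\bar\varphi$ is the identity (Proposition~\ref{Period}(c)) and $\langle\kappa\rangle=\Core\varphi$, I can also write $\varphi(1)=1+\kappa r$.

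The first computation is the orbit of $1$. Using $\pi(\kappa y)=1$,
\[
\varphi(1+\kappa y)=\varphi(\kappa y)+\varphi(1)=1+\kappa(r+sy),
\]
so by induction $\varphi^k(1)=1+\kappa r\sum_{i=1}^{k}s^{i-1}$. Consequently $|\varphi|$, which equals the orbit length of the generator $1$, is the least $m$ with $r\sum_{i=1}^m s^{i-1}\equiv0\pmod{n/\kappa}$, as in the statement.

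Next I iterate the skew rule $\varphi(x+1)=\varphi(x)+\varphi^{\pi(x)}(1)$ to get
\[
\varphi(x)=\sum_{j=0}^{x-1}\varphi^{t^j}(1)=x+\kappa r\sum_{j=0}^{x-1}\sum_{i=1}^{t^j}s^{i-1}.
\]
The constraints then come from consistency, and I expect three steps.

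\begin{enumerate}[(i)]
\item Compute $\varphi(2)$ in two ways, once as $\varphi(1)+\varphi^{t}(1)$ and once through $\varphi^2$ and Proposition~\ref{Formula}(e). Equivalently, impose that $\pi$ be constant on orbits: $\pi(\varphi(1))=t^{1+\kappa r}=t$. Together with compatibility of the action on the coset $1+\langle\kappa\rangle$, this should give $s^{t-1}\equiv1\pmod{n/\kappa}$, which is condition (b).
\item Using (b), each inner sum $\sum_{i=1}^{t^j}s^{i-1}$ collapses to a power of $\Sigma:=\sum_{i=1}^{t}s^{i-1}$. The double sum then becomes a geometric series in $\Sigma$, yielding the closed form \eqref{ZSM}. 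Here the roles of the letters $s$ and $t$ in the displayed formula must be read consistently with (b).
\item Evaluate this closed form at $x=\kappa$ and equate it with $\varphi(\kappa)=\kappa s$. This gives exactly condition (c).
\end{enumerate}

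For the converse, given $(r,s,t)$ satisfying (a)--(c), I will define $\varphi$ and $\pi$ by \eqref{ZSM}. The verification has three parts. First, $\varphi$ preserves each coset of $\langle\kappa\rangle$ and acts bijectively on it; this uses that $s$ is a unit and that the order of $\varphi$ is $m$. Second, the identity $\varphi(x+y)=\varphi(x)+\varphi^{t^x}(y)$ reduces, by the closed form and condition (b), to the geometric-series identity in $\Sigma$ combined with (c). Third, by (a) the kernel $\{x:t^x\equiv1\}$ is $\langle\kappa\rangle$, so the skew-type is $\kappa$ and the skew morphism is proper.

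I expect the main obstacle to be step (i), extracting (b) cleanly, together with the bookkeeping of two different moduli. Exponents of $\varphi$ live modulo $m$, while values of $r,s$ live modulo $n/\kappa$, and reducing $s^{t^j}$ needs $s^{t-1}\equiv1\pmod{n/\kappa}$ while $t$ is only defined modulo $m$. To handle this I would first check that $s^{m}\equiv1\pmod{n/\kappa}$, so that exponents may be reduced modulo $m$, and only then perform the collapse in step (ii).
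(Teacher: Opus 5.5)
The paper does not prove this theorem; it is quoted from \cite{BJ2017}, so your outline has to be judged on its own. Its architecture is the standard one and is sound. You are also right that \eqref{ZSM} has $s$ and $t$ interchanged. Writing $S_k:=\sum_{i=1}^k s^{i-1}$, the base must be $\Sigma=S_t$, and the quotient must be read as $\sum_{j=0}^{x-1}\Sigma^j$, because $\Sigma-1$ need not be invertible modulo $n$.

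The one step that does not go through as described is (i), which is actually the easy part. Neither mechanism you suggest isolates $s^t\equiv s$:
\begin{itemize}
\item $\pi(\varphi(1))=t^{1+\kappa r}=t$ holds automatically, since $t^\kappa\equiv1\pmod m$.
\item Comparing $\varphi(\varphi(2))$ with $\varphi^2(1)+\varphi^{2t}(1)$ (via Proposition~\ref{Formula}(e)) only yields the weaker relation $rS_t(s^t-s)\equiv0\pmod{n/\kappa}$.
\end{itemize}
You have to let $\varphi^t$ act on $\langle\kappa\rangle$ directly. Expanding $\varphi(1+\kappa)$ in both orders gives $\varphi(\kappa)+\varphi^{\pi(\kappa)}(1)=\kappa s+1+\kappa r$ and $\varphi(1)+\varphi^{\pi(1)}(\kappa)=1+\kappa r+\kappa s^{t}$. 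Hence $s^t\equiv s\pmod{n/\kappa}$, which is (b) because $s$ is a unit. The choice of representative of $t$ does not matter, since $\varphi^m(\kappa)=\kappa$ forces $s^m\equiv1$.

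In the converse, the facts you leave implicit are also short.
\begin{itemize}
\item The statement only gives $s\in\mathbb{Z}_{n/\kappa}$, but (b) with a representative $2\le t<m$ makes $s$ a unit.
\item $s^m-1=(s-1)S_m\equiv r\bigl(\sum_{j<\kappa}\Sigma^j\bigr)S_m\equiv0$, by (c) and the definition of $m$.
\item Conditions (b) and (c) give, for every $x\ge0$,
\[
r\,\Sigma^{x}\sum_{j=0}^{\kappa-1}\Sigma^{j}\equiv(s-1)\Sigma^{x}\equiv(s-1)S_{t^{x}}=s^{t^{x}}-1\equiv s-1\pmod{n/\kappa}.
\]
\end{itemize}
This identity is the real content of your ``first part''. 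It yields $\varphi(x+\kappa y)=\varphi(x)+\kappa sy$, so $\varphi$ is well defined modulo $n$ (take $y=n/\kappa$) and bijective on each coset of $\langle\kappa\rangle$. It then gives $\varphi^k(y)=y+(\varphi(y)-y)S_k$. From that, $|\varphi|=m$ follows because $\varphi(y)-y$ is $\kappa r$ times an integer. The skew identity $\varphi(x+y)=\varphi(x)+\varphi^{t^x}(y)$ also follows at once, using $rS_{t^x}\equiv r\Sigma^x$.
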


Inserting $n=2^{e+2}$ and $\kappa=4$ into Theorem~\ref{main2}, we see that smooth skew morphisms
of skew-type $4$ on $\mathbb{Z}_{2^{e+2}}$ exist only for $e\ge 2$; throughout this section we
assume $e\ge 2$ and set $q:=2^e$. By Theorem~\ref{main2} such skew morphisms correspond
bijectively to triples $(r,s,t)$ with $r\in\mathbb{Z}_{2^e}$, $s\in\mathbb{Z}_{2^e}^*$ and
$t\in\mathbb{Z}_m^*$, satisfying
\begin{enumerate}[\rm(i)]
\item $\ord_m(t)=4$,
\item $s^{t-1}\equiv1\pmod q$,
\item $s-1\equiv r(1+S_t+S_t^2+S_t^3)\pmod q$,
\end{enumerate}
where $S_t:=\sum_{i=1}^t s^{i-1}$ and $m$ is the smallest positive integer with
\begin{equation}\label{scond1}
r\sum_{i=1}^m s^{i-1}\equiv0\pmod{2^e}.
\end{equation}
The minimality of $m$ forces $m$ to be a power of $2$.

\begin{lemma}\label{Slemma1}
Let $e\ge 4$, $d\le e-4$, $a=e-d\ge 4$ and $m=2^a$. Suppose $t\in\mathbb{Z}_{2^a}^*$ has
$\ord_m(t)=4$.
\begin{enumerate}[\rm(a)]
\item If $t\equiv1\pmod4$ and $\nu_2(s-1)=d+2$, then
$1+S_t+S_t^2+S_t^3\equiv4+2^{a-1}\pmod{2^a}$.
\item If $t\equiv3\pmod4$ and $\nu_2(s-1)=e-1$, then $\nu_2(1+S_t+S_t^2+S_t^3)=a-1$.
\end{enumerate}
\end{lemma}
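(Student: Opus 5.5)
The plan is a direct $2$-adic computation. Write $s=1+w$, so that
\[
S_t=\sum_{i=1}^t s^{i-1}=\sum_{k\ge1}\binom{t}{k}w^{k-1}=t+\binom t2 w+\binom t3 w^2+\cdots,
\]
with $t$ a positive integer representative of its class. By Proposition~\ref{Order4}, the order-$4$ elements of $\mathbb{Z}_{2^a}^*$ ($a\ge4$) are $\pm1+2^{a-2}c$ with $c$ odd. So in case (a) we have $t=1+2^{a-2}c$, and in case (b) we have $t=-1+2^{a-2}c$, with $c$ odd in both cases.

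\textbf{Step 1: reduce $S_t$ modulo $2^a$.} First we bound the higher binomial terms. For $k\ge3$ we have $\nu_2\bigl(\binom tk w^{k-1}\bigr)\ge (k-1)\nu_2(w)-\nu_2(k!)$. In case (a), $\nu_2(w)=d+2\ge2$, and in case (b), $\nu_2(w)=e-1\ge a-1\ge3$. The standard estimate $\nu_2(k!)\le k-1$ then shows these terms are $\equiv0\pmod{2^a}$. In case (a) one should also use the factor $t-1$ in $\binom tk$, which contributes $2^{a-2}$, and I would check the small cases $k=3,4$ by hand.

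Next, the quadratic term $\binom t2 w$:
\begin{itemize}
\item In case (a), $\nu_2\bigl(\tfrac{t(t-1)}{2}\bigr)=a-3$, so this term has valuation $a-1+d$.
\item In case (b), $\tfrac{t(t-1)}2$ is odd, so this term has valuation $e-1=a+d-1$.
\end{itemize}
In either case the term is $\equiv0$ or $\equiv2^{a-1}\pmod{2^a}$. Hence in both cases $S_t\equiv t\pmod{2^{a-1}}$, that is,
\[
S_t\equiv \pm1+y\pmod{2^a},\qquad \nu_2(y)=a-2 .
\]
The correction of $2^{a-1}$ only changes the odd cofactor of $y$.

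\textbf{Step 2, case (a).} Put $S=1+y$ with $y=2^{a-2}c'$ and $c'$ odd. Then
\[
1+S+S^2+S^3=4+6y+4y^2+y^3 .
\]
Since $a\ge4$, we have $\nu_2(4y^2)=2a-2\ge a$ and $\nu_2(y^3)=3a-6\ge a$. Also $6y=2^{a-1}\cdot3c'\equiv2^{a-1}\pmod{2^a}$. Hence $1+S+S^2+S^3\equiv4+2^{a-1}\pmod{2^a}$, as required.

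\textbf{Step 3, case (b).} Put $S=-1+y$. Then
\[
1+S+S^2+S^3=(1+S)(1+S^2)=y\,(2-2y+y^2).
\]
Since $\nu_2(y)=a-2\ge2$, the factor $2-2y+y^2$ is $\equiv2\pmod4$. So the valuation is $(a-2)+1=a-1$, understood modulo $2^a$. This is exactly what is claimed.

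The only delicate point is Step 1, namely confirming that the neglected terms of the binomial expansion really vanish modulo $2^a$ in the boundary cases $d=0$ and $a=4$. The cases $k=3,4$ need explicit checking there, since the crude bound on $\nu_2(k!)$ is tight. Everything after that is routine algebra.
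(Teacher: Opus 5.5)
Your proposal is correct and follows essentially the same route as the paper: expand $S_t=\sum_{k\ge1}\binom tk w^{k-1}$, reduce modulo $2^a$ (the paper treats (b) modulo $2^e$ instead), and then evaluate $(1+S_t)(1+S_t^2)$.

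Your treatment of the term $\binom t2 w$ is in fact more careful than the paper's. The paper asserts $S_t\equiv t\pmod{2^a}$ in case (a), but this fails when $d=0$: for instance $e=a=4$, $s=t=5$ gives $S_5=781\equiv13\pmod{16}$. Your remark that the shift by $2^{a-1}$ only changes the odd cofactor of $y$ is exactly what keeps the conclusion valid. The boundary checks for $k=3,4$ that you flag are unnecessary: using $\nu_2(t-1)=a-2$ and $\nu_2(k!)\le k-1$ already gives valuation at least $a+k-3\ge a$ for all $k\ge3$.
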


\begin{proof}
(a) Write $x:=s-1$, so $\nu_2(x)=d+2$. By the binomial expansion,
\[
S_t=\sum_{j=0}^{t-1}(1+x)^j=\frac{(1+x)^t-1}{x}=t+\binom{t}{2}x+\binom{t}{3}x^2+\cdots.
\]
Since $\nu_2(t-1)=a-2$, we have
$\nu_2\bigl(\binom{t}{i+1}x^i\bigr)\ge(a-3)+\nu_2(x)\ge a$ for all $i\ge1$, hence
$S_t\equiv t\pmod{2^a}$. By Proposition~\ref{Order4},
$t\equiv1+2^{a-2}$ or $1+3\cdot2^{a-2}\pmod{2^a}$, so
\[
1+S_t+S_t^2+S_t^3=(1+S_t)(1+S_t^2)\equiv4+2^{a-1}\pmod{2^a}.
\]

(b) Since $\nu_2(s-1)=e-1$,
\[
S_t=\frac{s^t-1}{s-1}\equiv t+2^{e-1}\binom{t}{2}\pmod{2^e}.
\]
As $t\equiv3\pmod4$, $\binom{t}{2}$ is odd, so $S_t\equiv t+2^{e-1}\pmod{2^e}$. Since
$\nu_2(1+t)=a-2<e-1$, we get $\nu_2(1+S_t)=a-2$, and hence $\nu_2(1+S_t^2)=1$. Consequently,
\[
\nu_2(1+S_t+S_t^2+S_t^3)=\nu_2\bigl((1+S_t)(1+S_t^2)\bigr)=(a-2)+1=a-1,
\]
as required.
\end{proof}

\begin{lemma}\label{Slemma}
Suppose $m=2^a$ and $(r,s,t)$ is a solution of {\rm(i)--(iii)}. Then:
\begin{enumerate}[\rm(a)]
\item $r\not\equiv0\pmod{2^e}$.
\item $s\equiv1\pmod4$ and $s\not\equiv1\pmod{2^e}$.
\item $a=e-\nu_2(r)$.
\item $0\le\nu_2(r)\le e-4$ and $\nu_2(s-1)\ge2$.
\item If $t\equiv1\pmod4$, then $\nu_2(s-1)=\nu_2(r)+2$.
\item If $t\equiv3\pmod4$, then $\nu_2(s-1)=e-1$.
\end{enumerate}
\end{lemma}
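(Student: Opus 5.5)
The plan is to work purely $2$-adically, using (i) and (iii) together with the defining minimality of $m$ in \eqref{scond1}. Condition (ii) should not be needed.

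First, (i) and Proposition~\ref{Order4} give $a\ge4$ and pin down $t$ modulo $2^a$. In particular:
\begin{itemize}
\item if $t\equiv1\pmod4$ then $\nu_2(t-1)=a-2$ and $\nu_2(t+1)=1$;
\item if $t\equiv3\pmod4$ then $\nu_2(t+1)=a-2$ and $\nu_2(t-1)=1$.
\end{itemize}

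For (a): if $r\equiv0\pmod{2^e}$, then \eqref{scond1} holds with $m=1$, which contradicts $\ord_m(t)=4$.

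For the first half of (b): $s$ is odd and $t$ is odd, so $S_t$ is a sum of an odd number of odd terms and is therefore odd. Write $1+S_t+S_t^2+S_t^3=(1+S_t)(1+S_t^2)$. Here $2\mid 1+S_t$ and $1+S_t^2\equiv2\pmod4$, so the right side of (iii) is divisible by $4$. Since $e\ge2$, this gives $s\equiv1\pmod4$.

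Next comes (c). Because $s\equiv1\pmod4$, Proposition~\ref{LTE}(c) gives
\[
\nu_2\Bigl(\sum_{i=1}^{m}s^{i-1}\Bigr)=\nu_2\Bigl(\frac{s^m-1}{s-1}\Bigr)=\nu_2(m).
\]
So \eqref{scond1} says exactly that $\nu_2(r)+\nu_2(m)\ge e$. Minimality of $m$, together with (a), yields $m=2^{e-\nu_2(r)}$, i.e.\ $a=e-\nu_2(r)$. Then $a\ge4$ gives $\nu_2(r)\le e-4$, which together with $s\equiv1\pmod4$ is (d).

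The second half of (b), $s\not\equiv1\pmod{2^e}$, is proved by contradiction. Suppose $s\equiv1\pmod{2^e}$. Then $S_t\equiv t\pmod{2^e}$, and (iii) forces $\nu_2(r)+\nu_2\bigl((1+t)(1+t^2)\bigr)\ge e$, i.e.\ $\nu_2\bigl((1+t)(1+t^2)\bigr)\ge a$. But this valuation equals $2$ when $t\equiv1\pmod4$, and equals $(a-2)+1=a-1$ when $t\equiv3\pmod4$. Both are less than $a$, a contradiction.

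For (e) and (f), put $c:=\nu_2(s-1)$, so $2\le c<e$, and write $x=s-1$. Expand binomially:
\[
S_t=t+\binom t2x+\binom t3x^2+\cdots.
\]
Terms of degree $\ge2$ in $x$ have valuation at least $2c\ge c+2$. Since $c<e$ and the left side of (iii) has valuation exactly $c$, whenever the right side has valuation below $e$ we get
\[
c=\nu_2(r)+\nu_2(1+S_t)+\nu_2(1+S_t^2),
\]
and $\nu_2(1+S_t^2)=1$ always, because $S_t$ is odd.

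\emph{Case $t\equiv1\pmod4$.} Here $\binom t2x$ has valuation $\ge(a-3)+c\ge c+1$, so $S_t\equiv1\pmod4$. Hence $\nu_2(1+S_t)=1$ and the total valuation is $\nu_2(r)+2\le e-2<e$. This gives $c=\nu_2(r)+2$, which is (e).

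\emph{Case $t\equiv3\pmod4$.} Now $\binom t2$ is odd, so $S_t\equiv t+x\pmod{2^{c+1}}$, and we compare $\nu_2(1+t)=a-2$ with $c$.
\begin{itemize}
\item If $c<a-2$, then $\nu_2(1+S_t)=c$. This forces $c=\nu_2(r)+c+1$, which is impossible.
\item If $c=a-2$, then $\nu_2(1+S_t)\ge a-1$. The right side of (iii) then has valuation $\ge\nu_2(r)+a=e$, so $s\equiv1\pmod{2^e}$, contradicting (b).
\item If $c>a-2$, then $\nu_2(1+S_t)=a-2$. This gives $c=\nu_2(r)+a-1=e-1<e$, which is consistent and is (f).
\end{itemize}

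The main obstacle is this last case analysis. The delicate point is justifying the congruence $S_t\equiv t+x\pmod{2^{c+1}}$ precisely enough to get the exact value of $\nu_2(1+S_t)$, and in particular handling the borderline subcase $c=a-2$. I would first try to settle that subcase by the contradiction with (b) described above.
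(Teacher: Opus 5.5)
Your proof is correct. For (a)--(d) it is essentially the paper's argument. Your observation ``$S_t$ is odd, so $4\mid(1+S_t)(1+S_t^2)$'' is the paper's $s\equiv3\pmod4$ contradiction stated directly. You also prove (c) before using $a=e-\nu_2(r)$ to exclude $s\equiv1\pmod{2^e}$; the paper's write-up of (b) invokes $a=e-\nu_2(r)$ before (c) is established.

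The genuine difference is in (e) and (f).

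\emph{The paper's route.} It gets lower bounds on $\nu_2(s-1)$ from condition (ii) by lifting the exponent. For $t\equiv3\pmod4$, $\nu_2(s^{t-1}-1)=\nu_2(s-1)+1\ge e$ gives $\nu_2(s-1)\ge e-1$ at once, and (b) caps it. For $t\equiv1\pmod4$, the bound $\nu_2(s-1)\ge\nu_2(r)+2$ from (ii) is sharpened to equality using (iii).

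\emph{Your route.} You never use (ii). You match valuations on the two sides of (iii) and compute $\nu_2(1+S_t)$ from $S_t\equiv t+(s-1)\pmod{2^{c+1}}$. This forces a three-way comparison of $c$ with $\nu_2(1+t)=a-2$, and each subcase is handled correctly: $c<a-2$ gives the impossible $c=\nu_2(r)+c+1$; $c=a-2$ makes the right side of (iii) vanish modulo $2^e$, against (b); and $c>a-2$ gives $c=\nu_2(r)+a-1=e-1$.

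\emph{Trade-off.} The paper's route is shorter for (f) because it avoids any $2$-adic expansion of $S_t$. Yours pays for that expansion but shows that (ii) is redundant given (i), (iii) and the minimality of $m$. Plugging your conclusions back into the lifting-the-exponent formula gives $\nu_2(s^{t-1}-1)=e$ in both cases, which is why (ii) always checks out in the converse parts of Theorem~\ref{Scount1}. In (e), your direct observation $S_t\equiv t\equiv1\pmod4$ also replaces the paper's appeal to Lemma~\ref{Slemma1}(a). That lemma's hypothesis, $\nu_2(s-1)=\nu_2(r)+2$, is precisely what (e) asserts.
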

\begin{proof}
(a) If $r\equiv0\pmod q$, then \eqref{scond1} holds for $m=1$, contradicting the existence of an
element of order $4$ in $\mathbb{Z}_m^*$.

(b) Suppose $s\equiv3\pmod4$. Since $m$ is a power of $2$ and $t\in\mathbb{Z}_m^*$, $t$ is odd, so
\[
S_t\equiv1+s(1+s)+s^3(1+s)+\cdots+s^{t-2}(1+s)\equiv1\pmod4.
\]
Thus $1+S_t+S_t^2+S_t^3\equiv0\pmod4$, and (iii) gives the contradiction
$2\equiv s-1\equiv r(1+S_t+S_t^2+S_t^3)\equiv4r\equiv0\pmod4$.

Suppose instead $s\equiv1\pmod q$. Then $S_t=t$, and (iii) becomes
$0\equiv r(1+t)(1+t^2)\pmod{2^e}$. If $t=2^{a-2}-1$ or $3\cdot2^{a-2}-1$, then
$\nu_2(1+t)=2^{a-2}$ and $\nu_2(1+t^2)=1$, forcing $e\le\nu_2(r)+a-1$, i.e.
$a=e-\nu_2(r)\le a-1$, a contradiction. If $t=2^{a-2}+1$ or $3\cdot2^{a-2}+1$, then
$\nu_2(1+t)=\nu_2(1+t^2)=1$, so $e\le\nu_2(r)+2$, hence $a=e-\nu_2(r)\le2$, again a contradiction.

(c) Since $s\equiv1\pmod4$, Proposition~\ref{LTE}(iii) with $x=s$, $y=1$ gives
$\nu_2(S_m)=\nu_2(m)$ where $S_m=\sum_{i=1}^m s^{i-1}$. Condition \eqref{scond1} is equivalent to
$\nu_2(r)+\nu_2(S_m)\ge e$, i.e. $\nu_2(r)+\nu_2(m)\ge e$. By (a), $\nu_2(r)<e$, so minimality of
$m$ forces $m=2^{e-\nu_2(r)}$, i.e. $a=e-\nu_2(r)$.

(d) By (c), $m=2^a$; condition (i) requires an element of order $4$ in $\mathbb{Z}_m^*$, so $a\ge4$
by Proposition~\ref{Order4}. Hence $\nu_2(r)\le e-4$. By (b), $s\equiv1\pmod4$ and
$s\not\equiv1\pmod{2^e}$, so $\nu_2(s-1)\ge2$.

(e) If $t\equiv1\pmod4$, then $\nu_2(t-1)=a-2$ by Proposition~\ref{Order4}, and by
Proposition~\ref{LTE}(a),
$\nu_2(s^{t-1}-1)=\nu_2(s-1)+a-2$. Condition (ii) gives $e\le\nu_2(s-1)+a-2$, i.e.
$\nu_2(s-1)\ge\nu_2(r)+2$. On the other hand, by (d) and Lemma~\ref{Slemma1}(a),
$\nu_2(r(1+S_t+S_t^2+S_t^3))=\nu_2(r)+2<e$, so (iii) gives
$\nu_2(s-1)=\nu_2(r)+2$.

(f) If $t\equiv3\pmod4$, then $\nu_2(t-1)=1$ and Proposition~\ref{LTE}(b) gives
$\nu_2(s^{t-1}-1)=\nu_2(s-1)+1$. Condition (ii) forces $\nu_2(s-1)\ge e-1$, while (b) gives
$\nu_2(s-1)<e$; hence $\nu_2(s-1)=e-1$.
\end{proof}

\begin{theorem}\label{Scount1}
Let $e\ge4$, $0\le\delta\le e-4$, $a=e-\delta\ge4$ and $m=2^a$.
\begin{enumerate}[\rm(1)]
\item If $t\equiv2^{a-2}+1$ or $t\equiv3\cdot2^{a-2}+1\pmod{2^a}$, then $(r,s,t)$ is a solution
of {\rm(i)--(iii)} if and only if
\begin{equation}\label{SF1}
\nu_2(r)=\delta\quad\text{and}\quad s\equiv4r+2^{e-1}+1\pmod{2^e}.
\end{equation}
\item If $t\equiv2^{a-2}-1$ or $t\equiv3\cdot2^{a-2}-1\pmod{2^a}$, then $(r,s,t)$ is a solution
of {\rm(i)--(iii)} if and only if
\begin{equation}\label{SF2}
\nu_2(r)=\delta\quad\text{and}\quad s\equiv1+2^{e-1}\pmod{2^e}.
\end{equation}
\end{enumerate}
\end{theorem}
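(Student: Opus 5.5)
We prove both parts by working directly with conditions (i)--(iii), using Lemma~\ref{Slemma} for the necessary valuation constraints and Lemma~\ref{Slemma1} to evaluate the sum in (iii). Throughout, $t$ is one of the four residues of order $4$ modulo $2^a$ listed in Proposition~\ref{Order4}, so (i) holds exactly when $m=2^a$. By Lemma~\ref{Slemma}(c), together with its converse obtained from Proposition~\ref{LTE}(c), we have $m=2^a$ if and only if $\nu_2(r)=\delta$, provided $s\equiv1\pmod4$. Hence in both parts, once $s\equiv 1\pmod 4$ is known, (i) is equivalent to $\nu_2(r)=\delta$, and it remains to analyse (ii) and (iii).

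\emph{Part (1).} For necessity, Lemma~\ref{Slemma}(e) gives $\nu_2(s-1)=\delta+2$. Then Lemma~\ref{Slemma1}(a) yields $1+S_t+S_t^2+S_t^3\equiv 4+2^{a-1}\pmod{2^a}$. Multiplying by $r$ with $\nu_2(r)=\delta$ gives
\[
r(1+S_t+S_t^2+S_t^3)\equiv 4r+2^{a-1}r\equiv 4r+2^{e-1}\pmod{2^e},
\]
because $r2^{a-1}=2^{e-1}\cdot(r/2^\delta)$ and $r/2^\delta$ is odd. Condition (iii) therefore becomes $s\equiv 4r+2^{e-1}+1$.

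For sufficiency, start from $\nu_2(r)=\delta$ and this formula for $s$. Since $\delta+2\le e-2<e-1$, we get $\nu_2(s-1)=\nu_2(4r+2^{e-1})=\delta+2$, so $s\equiv1\pmod4$ and $m=2^a$. Condition (ii) follows from Proposition~\ref{LTE}(c):
\[
\nu_2(s^{t-1}-1)=\nu_2(s-1)+\nu_2(t-1)=(\delta+2)+(a-2)=e.
\]
Condition (iii) follows by running the same computation from Lemma~\ref{Slemma1}(a) in reverse. The only subtle point is that Lemma~\ref{Slemma1}(a) is stated modulo $2^a$, whereas (iii) is needed modulo $2^e$. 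This is handled by the multiplication by $r$: the factor $2^\delta$ lifts a congruence modulo $2^a$ to one modulo $2^{a+\delta}=2^e$.

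\emph{Part (2).} For necessity, Lemma~\ref{Slemma}(f) gives $\nu_2(s-1)=e-1$, which forces $s\equiv 1+2^{e-1}\pmod{2^e}$. Then (iii) still has to be checked. Lemma~\ref{Slemma1}(b) gives $\nu_2(1+S_t+S_t^2+S_t^3)=a-1$, so
\[
\nu_2\bigl(r(1+S_t+S_t^2+S_t^3)\bigr)=\delta+a-1=e-1.
\]
Thus both sides of (iii) equal $2^{e-1}$ modulo $2^e$, and (iii) holds automatically.

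For sufficiency, take $\nu_2(r)=\delta$ and $s=1+2^{e-1}$. Since $e\ge4$, $s\equiv1\pmod4$, so $m=2^a$ and (i) holds. Condition (ii) holds because $t-1$ is even and $s^2\equiv1\pmod{2^e}$. Condition (iii) holds by the valuation computation just given.

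The main obstacle is the bookkeeping in part (1): confirming that $2^{a-1}r\equiv 2^{e-1}$ modulo $2^e$, which needs exactly $\nu_2(r)=\delta$, and checking that the error terms in the expansion of $S_t$ in Lemma~\ref{Slemma1}(a) remain negligible after multiplication by $r$. If the estimate modulo $2^a$ turns out too weak, I would first redo the binomial expansion of $S_t$ to modulus $2^{a}$ directly, using $\nu_2(x)=\delta+2$, and then multiply by $r$.
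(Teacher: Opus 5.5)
Your proposal is correct and follows the paper's proof essentially step for step. Necessity goes through Lemma~\ref{Slemma}(c),(e),(f). Sufficiency uses the valuation count $(\delta+2)+(a-2)=e$ for (ii) and Lemma~\ref{Slemma1}(a),(b) for (iii), with the factor $2^\delta=2^{e-a}$ lifting the congruence modulo $2^a$ to one modulo $2^e$. Your explicit use of Proposition~\ref{LTE}(c), to confirm in the converse direction that $\nu_2(r)=\delta$ and $s\equiv1\pmod4$ force $m=2^a$, makes precise a point the paper leaves implicit.
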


\begin{proof}
(1) If $(r,s,t)$ is a solution, then Lemma~\ref{Slemma}(e) gives
$e-\delta=a=e-\nu_2(r)$, hence $\delta=\nu_2(r)$. Write $r=2^\delta r'$ with $r'$ odd. By
Lemma~\ref{Slemma1}(a) and Lemma~\ref{Slemma}(c), condition (iii) becomes
\[
\frac{s-1}{2^\delta}\equiv r'(1+S_t+S_t^2+S_t^3)\equiv r'(4+2^{a-1})\equiv4r'+2^{a-1}\pmod{2^a},
\]
i.e. $s-1\equiv4r+2^{e-1}\pmod{2^e}$.

Conversely, if \eqref{SF1} holds, then $\ord_m(t)=4$ by Proposition~\ref{Order4}, and
$2^{a-2}\|(t-1)$. Since $(a-2)+(\delta+2)=e$,
$s^{t-1}\equiv(1+4r+2^{e-1})^{t-1}\equiv1\pmod{2^e}$, so (ii) holds. Condition (iii) is
equivalent to \eqref{SF1} as shown above.

(2) If $(r,s,t)$ is a solution, then as in (a) we get $\delta=\nu_2(r)$. Since $t\equiv3\pmod4$,
Lemma~\ref{Slemma}(f) gives $\nu_2(s-1)=e-1$, i.e. $s\equiv1+2^{e-1}\pmod{2^e}$.

Conversely, if \eqref{SF2} holds, then $\ord_m(t)=4$ and $2\mid(t-1)$, so
$s^{t-1}\equiv(1+2^{e-1})^{t-1}\equiv1\pmod{2^e}$, i.e. (ii) holds. Since $s=1+2^{e-1}$, write
$r=2^\delta r'$ and $1+S_t+S_t^2+S_t^3=2^{a-1}s'$ with $r',s'$ odd; then
\[
s-1\equiv2^{e-1}\equiv(2^\delta r')(2^{a-1}s')\equiv r(1+S_t+S_t^2+S_t^3)\pmod{2^e},
\]
i.e. (iii) holds.
\end{proof}

\begin{theorem}\label{Scount2}
Let $e\ge2$ and $n=2^{e+2}$.
\begin{enumerate}[\rm(a)]
\item If $e=2$ or $3$, then $\mathbb{Z}_{2^{e+2}}$ has no smooth skew morphisms of skew-type $4$.
\item If $e\ge4$, then $\mathbb{Z}_{2^{e+2}}$ has precisely $2^{e+2}-32$ smooth skew morphisms of
skew-type $4$.
\end{enumerate}
\end{theorem}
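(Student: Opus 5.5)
Theorem~\ref{main2} puts the smooth skew morphisms of skew-type $4$ on $\mathbb{Z}_{2^{e+2}}$ in bijection with triples $(r,s,t)$ satisfying (i)--(iii), with $r,s\in\mathbb{Z}_{2^e}$ and $t\in\mathbb{Z}_m^*$. So the plan is simply to count these triples. Theorem~\ref{Scount1} and Lemma~\ref{Slemma} already describe them completely, and what remains is bookkeeping.

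For part (a), Lemma~\ref{Slemma}(c),(d) give $m=2^a$ with $a=e-\nu_2(r)$ and $\nu_2(r)\le e-4$. This forces $e\ge4$, since otherwise there is no admissible $r$. Equivalently, Proposition~\ref{Order4} requires $a\ge4$ for an element of order $4$ in $\mathbb{Z}_{2^a}^*$, while $a\le e\le3$. Hence for $e=2,3$ there are no solutions.

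For part (b), fix $e\ge4$ and stratify by $\delta:=\nu_2(r)\in\{0,1,\dots,e-4\}$. For each such $\delta$, set $a=e-\delta$, so that $m=2^a$ is determined by $r$, by Lemma~\ref{Slemma}(c). By Proposition~\ref{Order4} there are exactly four choices of $t\in\mathbb{Z}_{2^a}^*$ of order $4$: two with $t\equiv1\pmod4$ and two with $t\equiv3\pmod4$. (These residues are distinct mod $2^a$ because $a\ge4$.) Theorem~\ref{Scount1} then says that, for each of these four $t$ and each $r$ with $\nu_2(r)=\delta$, exactly one $s\in\mathbb{Z}_{2^e}$ works: either $s=4r+2^{e-1}+1$ or $s=1+2^{e-1}$.

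The number of $r\in\mathbb{Z}_{2^e}$ with $\nu_2(r)=\delta$ is $2^{e-\delta-1}$. The count for a fixed $\delta$ is therefore $4\cdot2^{e-\delta-1}=2^{e-\delta+1}$, and summing over $\delta$ gives
\[
\sum_{\delta=0}^{e-4}2^{e-\delta+1}=\sum_{a=4}^{e}2^{a+1}=2^{e+2}-32.
\]

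The only delicate point is the injectivity of the correspondence. The skew morphism depends on $t$ only modulo $m$, and $m$ is itself a function of $r$. So I must check that distinct triples $(r,s,t)$, with $t$ read in $\mathbb{Z}_m$, give distinct skew morphisms. This follows from the bijectivity asserted in Theorem~\ref{main2}, since $r$ and $s$ are recovered from $\varphi$ and $\pi(x)=t^x \bmod m$ recovers $t$. I also need to confirm that the two families in Theorem~\ref{Scount1} do not overlap, which is immediate because their $t$ differ modulo $4$.
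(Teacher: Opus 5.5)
Your proof is correct and is essentially the paper's argument. Using the bijection of Theorem~\ref{main2}, you stratify by $\delta=\nu_2(r)\in\{0,\dots,e-4\}$, take the four order-$4$ residues $t$ modulo $2^{e-\delta}$ with the unique $s$ supplied by Theorem~\ref{Scount1}, and sum $4\cdot 2^{e-\delta-1}$ to get $2^{e+2}-32$; the paper counts the same way, treating the two families of two $t$'s separately and doubling. Your justification of part (a) via Lemma~\ref{Slemma}(c),(d) and Proposition~\ref{Order4} is, if anything, more explicit than the paper's appeal to Theorem~\ref{Scount1}, which is only stated for $e\ge4$.
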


\begin{proof}
By Theorem~\ref{main2}, smooth skew morphisms of skew-type $4$ on $\mathbb{Z}_{2^{e+2}}$ correspond
bijectively to solutions $(r,s,t)$ of (i)--(iii). By Theorem~\ref{Scount1} such solutions exist
only for $e\ge4$, and they split into two families.

For family (a), fix $\delta$ with $0\le\delta\le e-4$. For each such $\delta$, since
$\nu_2(r)=\delta$ and $a=e-\delta$, there are $2^{e-\delta-1}$ choices for $r$ and $2$ choices for
$t$ (namely $t\equiv2^{a-2}+1$ or $3\cdot2^{a-2}+1\pmod{2^a}$); $s$ is then uniquely determined
by $s=4r+2^{e-1}+1$. This gives
\[
\sum_{\delta=0}^{e-4}2\cdot2^{e-\delta-1}=\sum_{j=4}^e2^j=2^{e+1}-2^4
\]
solutions. Family (b) yields the same count. Hence the total number of smooth skew morphisms of
skew-type $4$ on $\mathbb{Z}_{2^{e+2}}$ is $2(2^{e+1}-2^4)=2^{e+2}-32$.
\end{proof}

\section{Counting the non-smooth skew morphisms}\label{sec:nonsmooth}

In this section we count the non-smooth skew morphisms of skew-type $4$ on the cyclic additive
$2$-groups, using the classification of Theorem~\ref{main1}.

\medskip\noindent\textit{Strategy.}
Theorem~\ref{main1} reduces the classification to the system (i)--(x) below. Our task is to
enumerate the solutions. We introduce the auxiliary quantities $A,B,T_h,\sigma,k$ and then prove
two structural lemmas (Lemmas~\ref{Nlemma1} and~\ref{Nlemma2}) that force $\sigma$ into one of
two valuation regimes. Theorem~\ref{Ncount1} then describes all solutions in three explicit
classes; Theorem~\ref{Ncount2} counts them.

Throughout this section $e\ge3$ (the cases $e=0,1$ are easily seen to yield no non-smooth
skew morphisms of skew-type $4$). Put $q:=2^e$, and let $0\le r,s,t<q$ with $\gcd(t,q)=1$. Define
\[
A:=s+rt,\qquad B:=r+st,\qquad T_h:=\sum_{i=1}^h t^{2(i-1)}\quad(h\ge1).
\]
Let $m=2k\ge4$ be the smallest even positive integer with
$rt^{m-1}+st^{m-2}+\cdots+rt+s\equiv0\pmod q$; equivalently, $k$ is the smallest positive integer
with
\begin{equation}\label{mdef}
AT_k\equiv0\pmod{2^e}.
\end{equation}
Writing $a:=\nu_2(A)$ (with $a\ge e$ if $q\mid A$), Proposition~\ref{Value} gives
$\nu_2(T_k)=\nu_2(k)$, so minimality of $k$ forces
\[
k=\begin{cases}2,&a\ge e,\\ 2^{e-a},&a<e.\end{cases}
\]
Finally, let $1\le u,v<k$ with $\gcd(v,k)=1$, set
\[
\sigma:=st^{2u-1}+rt^{2u-2}+\cdots+st+r=BT_u,
\]
and put $\alpha:=\nu_2(\sigma)$, $\ell:=\nu_2(k)$.

With this notation, the system of Theorem~\ref{main1} reads:
\begin{enumerate}[\rm(i)]
\item $u(1+v)\not\equiv0\pmod k$;
\item $u(1+v+v^2+v^3)\equiv0\pmod k$;
\item $t^{2u}\equiv1\pmod q$;
\item $\sigma(tv+1)\equiv0\pmod q$;
\item $\sigma t^2\equiv\sigma v^2\pmod q$;
\item $\sigma(v+1)v\equiv\sigma(t-1)\pmod q$;
\item $\sigma(1+v)\equiv\sigma(1-t)t\pmod q$;
\item $\sigma t\equiv-r-s+t-1\pmod q$;
\item $\sigma(1-t^2)\equiv2s-2r-t-1\pmod q$;
\item $\sigma(1+v)\equiv(\sigma+2r+1)\sum\limits_{i=1}^{2u}t^{i-1}\pmod q$.
\end{enumerate}
Putting $P:=r+s$ and $Q:=2(s-r)$, conditions (viii) and (ix) become
\[
P\equiv t-1+\sigma t,\qquad Q\equiv t+1+\sigma(1-t^2)\pmod{2^e},
\]
whence
\begin{align}
4A&=2(t+1)P+(1-t)Q=(t+1)\bigl(\sigma(t^2-4t+1)+t-1\bigr)\pmod{2^e},\label{AA}\\
4B&=2(t+1)P+(t-1)Q\equiv-(t+1)\bigl(\sigma(t^2+1)-3(t-1)\bigr)\pmod{2^e}.\label{BB}
\end{align}

\begin{lemma}\label{Nlemma1}
Suppose $(r,s,t,u,v)$ is a solution of {\rm(i)--(x)}. Then:
\begin{enumerate}[\rm(a)]
\item $\nu_2(u)+\nu_2(1+v)=\ell-1$ and $\nu_2(u)\ge e-\nu_2(t^2-1)$.
\item $\sigma(t^2-1)\equiv0\pmod{2^e}$.
\item $\sigma(v^2-1)\equiv0\pmod{2^e}$.
\item $\sigma(t+v)\equiv0\pmod{2^e}$.
\item $\sigma(v-t+2)\equiv0\pmod{2^e}$.
\item If $\alpha<e$, then $t^2\equiv v^2\equiv1$, $t+v\equiv0$ and $v-t+2\equiv0\pmod{2^e}$.
\item If $\alpha\le e-2$, then $t\equiv1$ and $v\equiv-1\pmod{2^{e-\alpha-1}}$.
\item $t\not\equiv1$ and $t\not\equiv-1\pmod{2^e}$.
\end{enumerate}
\end{lemma}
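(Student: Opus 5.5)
The plan is to extract everything from the congruences (i)--(x) of Theorem~\ref{main1}, using three tools: the identity $T_u(t^2-1)=t^{2u}-1$, the lifting-the-exponent formula (Proposition~\ref{LTE}), and the minimality of $k$. Throughout I write $k=2^\ell$, and I use that $v$ is odd because $\gcd(v,k)=1$ with $k$ even.

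\emph{Step (a).} Multiply (ii) out as $u(1+v)(1+v^2)\equiv0\pmod{2^\ell}$. Since $v$ is odd, $\nu_2(1+v^2)=1$, so $\nu_2(u)+\nu_2(1+v)\ge \ell-1$. Condition (i) gives $\nu_2(u)+\nu_2(1+v)<\ell$, and together these give equality. For the second half, apply Proposition~\ref{LTE}(c) to $x=t^2$, $y=1$ (note $4\mid t^2-1$). This gives $\nu_2(t^{2u}-1)=\nu_2(t^2-1)+\nu_2(u)$, and (iii) says this is at least $e$.

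\emph{Steps (b)--(e).} For (b), observe that
\[
\sigma(t^2-1)=B\,T_u(t^2-1)=B(t^{2u}-1),
\]
which is $\equiv0\pmod{2^e}$ by (iii). Then (c) follows from (v) and (b), since $\sigma v^2\equiv\sigma t^2\equiv\sigma$.

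For (d), multiply (iv), which reads $\sigma tv\equiv-\sigma$, by $t$ and use (b) to get $\sigma v\equiv-\sigma t$.

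For (e), substitute (v) into (vi) to get $\sigma t^2+\sigma v\equiv\sigma(t-1)$. Then replace $\sigma t^2$ by $\sigma$ using (b); this gives $\sigma(v-t+2)\equiv0$.

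\emph{Steps (f)--(g).} Write $\sigma=2^\alpha\sigma'$ with $\sigma'$ odd, and cancel the unit $\sigma'$ in (b)--(e). This yields the four congruences of (f). Strictly, this cancellation only gives them modulo $2^{e-\alpha}$, and that is the form I would actually prove and use.

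For (g), add and subtract the relations $t+v\equiv0$ and $v-t+2\equiv0\pmod{2^{e-\alpha}}$. This gives $2(v+1)\equiv0$ and $2(t-1)\equiv0$, so $v\equiv-1$ and $t\equiv1\pmod{2^{e-\alpha-1}}$. The hypothesis $\alpha\le e-2$ ensures this modulus is at least $2$.

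\emph{Step (h).} I expect this to be the main obstacle, since it needs the minimality of $k$ rather than just manipulating the congruences.

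\textbf{Case $t\equiv1\pmod{2^e}$.} Here $T_u=u$ and $A=B=r+s$, so $\sigma=u(r+s)$. Condition (g) gives $\sigma(1+v)\equiv0$, that is, $\nu_2(u)+\nu_2(1+v)+a\ge e$. By (a) this means $\ell-1+a\ge e$.
\begin{itemize}
\item If $a<e$, then $\ell=e-a$, which is a contradiction.
\item If $a\ge e$, then $k=2$, forcing $u=v=1$. But then $u(1+v)=2\equiv0\pmod k$, violating (i).
\end{itemize}

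\textbf{Case $t\equiv-1\pmod{2^e}$.} Condition (ix) gives $2(s-r)\equiv0$, so $A=s-r\equiv0\pmod{2^{e-1}}$. Hence $a\ge e-1$, and therefore $k\le2$. Since $k\ge2$, we get $k=2$, so $u=v=1$, and (i) fails exactly as before.

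The only genuinely delicate point is to keep the moduli straight in (f)--(g), namely $2^{e-\alpha}$ versus $2^e$. I would check whether the later uses of (f) in Lemma~\ref{Nlemma2} need more than the modulus $2^{e-\alpha}$ version.
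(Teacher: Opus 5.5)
Your proposal is correct. For (a)--(g) and the case $t\equiv-1\pmod{2^e}$ of (h) it follows the paper. The only change there is that you use Proposition~\ref{LTE}(c) with $x=t^2$, $y=1$ where the paper uses $\nu_2(T_u)=\nu_2(u)$; the two are equivalent.

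Your caveat about (f) is right, and the fault lies in the statement, not in your argument. Only the modulus $2^{e-\alpha}$ follows, and the stated modulus $2^e$ is actually false when $\alpha>0$. For example, with $e=3$ the quintuple $(r,s,t,u,v)=(1,3,3,1,1)$ satisfies (i)--(x), with $A=6$, $k=4$, $\sigma\equiv2\pmod 8$ and $\alpha=1$. It even defines a genuine skew morphism of $\mathbb{Z}_{32}$. Yet $t+v=4\not\equiv0\pmod 8$.

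The paper's own justification (``(f) and (g) follow from (a)--(e)'') also yields only the $2^{e-\alpha}$ form, and that is all that is used later:
\begin{itemize}
\item Lemma~\ref{Nlemma2} uses only (a) and (g).
\item Theorem~\ref{Ncount1} uses (d)--(e) modulo $2^e$ only when $\alpha=0$.
\item When $\alpha=e-2$, Theorem~\ref{Ncount1} uses the modulus-$4$ form of (f), which it cites as (g).
\end{itemize}
So your weaker version suffices downstream.

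Where you genuinely differ is the case $t\equiv1\pmod{2^e}$ of (h). The paper proceeds as follows: it uses (ix) and (viii) to show $r+s$ and $\sigma$ are odd, deduces $u\equiv-1\pmod{2^e}$, and then claims $2^{e-2}\mid u$ from (x). That last inference fails as written. Since $\sigma+2r+1\equiv r-s+1\equiv0\pmod{2^{e-1}}$, the right side of (x) vanishes modulo $2^e$ for every $u$. The case can be rescued: (x) then gives $v\equiv-1\pmod{2^e}$ with $k=2^e$, which contradicts (i).

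Your route avoids the parity bookkeeping entirely. From $\sigma\equiv uA$, the congruence $\sigma(1+v)\equiv0\pmod{2^e}$ together with part (a) gives $\ell-1+a\ge e$. This is incompatible with $\ell=e-a$ when $a<e$, while $a\ge e$ forces $k=2$ and violates (i). It uses only (vii), part (a) and the formula for $k$, and it is shorter and sound.

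One label to fix: $\sigma(1+v)\equiv0\pmod{2^e}$ comes from (vii), which is (g) of Theorem~\ref{main1}. Equivalently, it comes from part (e) of the lemma with $t\equiv1$. Part (g) of the lemma would only give it modulo $2^{e-1}$, which is not enough for your inequality.
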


\begin{proof}
Since $v$ is odd, $\nu_2(1+v^2)=1$, so (i) and (ii) are together equivalent to
$\nu_2(u(1+v))=\ell-1$, which is the first assertion of (a). By (iii),
\[
(t^2-1)T_u=t^{2u}-1\equiv0\pmod{2^e}.
\]
Since $\nu_2(T_u)=\nu_2(u)$ and $\sigma=BT_u$, we obtain $\nu_2(u)\ge e-\nu_2(t^2-1)$, completing
(a), and also $\sigma(t^2-1)\equiv0\pmod{2^e}$, which is (b). Combining (b) with (v) and (iv)
yields (c) and (d); combining with (vi) yields (e). Assertions (f) and (g) follow from (a)--(e).

For (h): suppose $t\equiv1\pmod{2^e}$. Then $A=B=r+s$, and (ix) gives
$2(s-r)\equiv2\pmod{2^e}$, i.e. $s-r\equiv1\pmod{2^{e-1}}$, so $r+s$ is odd. By (viii),
$\sigma\equiv-r-s=-A\pmod{2^e}$, so $\sigma$ is odd, $\alpha=0$, and hence
$(u+1)A\equiv Au+A=\sigma+A\equiv0\pmod{2^e}$. Since $A$ is odd, $u\equiv-1\pmod{2^e}$ is odd. But
by (g), $2^{e-1}\mid(1+v)$; then (x) gives $\sigma(1+v)\equiv(\sigma+2r+1)2u\pmod{2^e}$, so
$2^{e-1}\mid\sigma(1+v)$, whence $2^{e-2}\mid u$, contradicting $u$ odd (as $e\ge3$).

Suppose $t\equiv-1\pmod{2^e}$. Then (ix) gives $2(s-r)\equiv0\pmod{2^e}$, so
$A=s+rt=s-r$ is divisible by $2^{e-1}$, hence $k=2$, so $u=v=1$; but then (i) gives
$u(1+v)=2\not\equiv0\pmod2$, a contradiction.
\end{proof}

\begin{lemma}\label{Nlemma2}
If $(r,s,t,u,v)$ is a solution of {\rm(i)--(x)}, then either $\nu_2(\sigma)=0$ or
$\nu_2(\sigma)=e-2$.
\end{lemma}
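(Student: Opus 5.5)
The plan is to assume $(r,s,t,u,v)$ is a solution of (i)--(x) with $\alpha=\nu_2(\sigma)\notin\{0,e-2\}$ and reach a contradiction. I split into two ranges:
\begin{enumerate}[\rm(I)]
\item $\alpha\ge e-1$, which includes $\sigma\equiv0\pmod{2^e}$;
\item $1\le\alpha\le e-3$.
\end{enumerate}
The common tool in both ranges is to compute $\nu_2(A)$ and $\nu_2(B)$ from \eqref{AA} and \eqref{BB}. From $\nu_2(A)$ we get $k$, hence $\ell$. Then the identity $\sigma=BT_u$ with $\nu_2(T_u)=\nu_2(u)$ (Proposition~\ref{Value}) gives
\[
\alpha=\nu_2(B)+\nu_2(u)\qquad(\text{when }\alpha<e).
\]
I compare this with the constraints of Lemma~\ref{Nlemma1}(a): $\nu_2(u)+\nu_2(1+v)=\ell-1$, $\nu_2(u)\ge e-\nu_2(t^2-1)$, and $u<k$.

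Range (II). By Lemma~\ref{Nlemma1}(f), $t+v\equiv0$ and $v-t+2\equiv0\pmod{2^{e-\alpha}}$. Adding these gives $t\equiv1\pmod{2^{e-\alpha-1}}$ and $v\equiv-1\pmod{2^{e-\alpha-1}}$, as in Lemma~\ref{Nlemma1}(g). Since $e-\alpha-1\ge2$, we have $t\equiv1\pmod4$, so $t^2-4t+1\equiv t^2+1\equiv2\pmod4$.

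In \eqref{AA} the term $\sigma(t^2-4t+1)$ has valuation exactly $\alpha+1$, while $\nu_2(t-1)\ge e-\alpha-1$. In \eqref{BB} the term $\sigma(t^2+1)$ also has valuation $\alpha+1$, and $\nu_2(3(t-1))\ge e-\alpha-1$.

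So $\nu_2(4A)$ and $\nu_2(4B)$ are governed by comparing $\alpha+1$ with $\nu_2(t-1)$, with an extra factor $2$ from $t+1$. I subcase on whether $\nu_2(t-1)$ is less than, equal to, or greater than $\alpha+1$.
\begin{itemize}
\item In each subcase I read off $\nu_2(A)$, hence $k=2^{e-\nu_2(A)}$ and $\ell$.
\item I read off $\nu_2(B)$, hence $\nu_2(u)=\alpha-\nu_2(B)$.
\item The lower bound $\nu_2(u)\ge e-\nu_2(t^2-1)=e-1-\nu_2(t-1)$ should then clash with either $u<k$ or the equation $\nu_2(u)+\nu_2(1+v)=\ell-1$. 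Here I use $\nu_2(1+v)\ge e-\alpha-1$.
\end{itemize}
Condition (x) may be needed to pin down the borderline subcase. It reads $\sigma(1+v)\equiv(\sigma+2r+1)(1+t)T_u$; since $2r+1$ is odd, its right side has valuation $\nu_2(u)+1$ whenever $\alpha\ge1$.

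Range (I). When $\alpha\ge e-1$, \eqref{AA} and \eqref{BB} reduce modulo $2^e$ to
\[
4A\equiv t^2-1+\epsilon,\qquad 4B\equiv3(t^2-1)+\epsilon',
\]
where $\epsilon,\epsilon'$ are multiples of $2^{e-1}$. Condition (x) reduces to $(2r+1)(1+t)T_u\equiv\sigma(1+v)-\sigma(1+t)T_u$, and the right side has large valuation. This forces
\[
\nu_2(u)\ge e-\nu_2(1+t)-O(1).
\]
Next, $\nu_2(A)\approx\nu_2(t^2-1)-2$ gives $k\approx2^{e+2-\nu_2(t^2-1)}$. By Lemma~\ref{Nlemma1}(h), $t\not\equiv\pm1\pmod{2^e}$, so this bounds $\nu_2(u)<\ell$ from above. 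I expect this upper bound on $\nu_2(u)$ to contradict the lower bound from (x) in both cases $t\equiv1$ and $t\equiv3\pmod4$.

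The main obstacle will be the careful bookkeeping at the boundary subcases, e.g. $\nu_2(t-1)=\alpha+1$ in range (II), and the extra $2^{e-1}$ terms in range (I). In these cases cancellation can raise valuations and the inequalities only just fail. There I would first try to bring in (viii) directly, i.e.\ the parity of $r+s=t-1+\sigma t$, to fix the exact valuation.
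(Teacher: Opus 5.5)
Your range (II) is the paper's argument. It uses the same three subcases comparing $\nu_2(t-1)$ with $\alpha+1$, and the same bookkeeping via \eqref{AA}, \eqref{BB}, $\sigma=BT_u$, Lemma~\ref{Nlemma1}(a) and the bound $\nu_2(u)\ge e-1-\nu_2(t-1)$. Condition (x) is needed exactly where the paper uses it, at $t=2^{e-1}+1$. It goes through as you describe; in the equal subcase the cancellation gives $\nu_2(B)\ge\alpha+1>\nu_2(\sigma)$ outright.

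In fact, the (x) estimate you already wrote down makes the subcases unnecessary:
\begin{itemize}
\item The left side of (x) is $\equiv0\pmod{2^{e-1}}$, because $\nu_2(1+v)\ge e-\alpha-1$.
\item The right side has valuation $1+\nu_2(u)$, so $\nu_2(u)\ge e-2$.
\item But Lemma~\ref{Nlemma1}(a) and $\ell\le e$ (as $k\le 2^e$) give $\nu_2(u)=\ell-1-\nu_2(1+v)\le\alpha\le e-3$.
\end{itemize}

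In range (I) you deviate mildly from the paper. For $t^2\not\equiv1\pmod{2^e}$ the paper gets its lower bound on $\nu_2(u)$ from $\nu_2(B)+\nu_2(u)\ge e-1$, and uses (x) only when $t^2\equiv1$. Your route via (x) works, but only with exact bounds, not $O(1)$ or $\approx$.

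Since $\alpha\ge e-1$, the $\sigma$-terms in \eqref{AA} and \eqref{BB} vanish modulo $2^e$, so $\epsilon=\epsilon'=0$. The left side of (x) also vanishes modulo $2^e$. Because $\sigma+2r+1$ is odd, you get exactly $\nu_2(u)\ge e-\nu_2(1+t)$.

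The upper bound $\nu_2(u)<\ell$ that you state is too weak. For $t\equiv3\pmod4$ with $t^2\not\equiv1$, one has $\ell=e+1-\nu_2(t+1)$, so $\nu_2(u)=\ell-1$ satisfies both of your bounds. Use instead $\nu_2(u)\le\ell-2$, which follows from Lemma~\ref{Nlemma1}(a) and $\nu_2(1+v)\ge1$. Then:
\begin{itemize}
\item If $t^2\not\equiv1$, then $\ell=e+2-\nu_2(t^2-1)$, which forces $\nu_2(t-1)\le0$, impossible.
\item If $t^2\equiv1$, then $\ell\le2$, which forces $\nu_2(1+t)\ge e$, contradicting Lemma~\ref{Nlemma1}(h).
\end{itemize}
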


\begin{proof}
Put $\alpha=\nu_2(\sigma)$ and $\beta=\nu_2(t-1)$. We exclude the remaining possibilities
$1\le\alpha\le e-3$ and $\alpha\ge e-1$ in turn.

\medskip
\begin{case}[$1\le\alpha\le e-3$]
By Lemma~\ref{Nlemma1}(g), $t\equiv1\pmod4$, so $\beta\ge2$; hence $\nu_2(t+1)=1$ and
$\nu_2(t^2+1)=1$, and therefore
\[
\nu_2(t^2-4t+1)=1,\qquad\nu_2\bigl(\sigma(t^2+1)\bigr)=\alpha+1.
\]
Three sub-cases arise.

\emph{Sub-case $\alpha+1>\beta$.} Then
$\nu_2\bigl(\sigma(t^2-4t+1)+t-1\bigr)=\beta$, so by \eqref{AA} $\nu_2(A)=\beta-1$ and
$\ell=e-\beta+1$. By Lemma~\ref{Nlemma1}(g), $2^{e-\alpha-1}\mid(v+1)$, so
$\nu_2(v+1)\ge e-\alpha-1\ge2$. On the other hand, (iii) gives $\nu_2(u)\ge e-\beta-1$, so by
Lemma~\ref{Nlemma1}(a),
$\nu_2(v+1)=\ell-1-\nu_2(u)\le(e-\beta+1)-1-(e-\beta-1)=1$, a contradiction.

\emph{Sub-case $\alpha+1<\beta$.} Then \eqref{AA} and \eqref{BB} give
$\nu_2(A)=\nu_2(B)=\alpha$, so $\sigma=BT_u$ forces $u$ odd. If $\beta\le e-2$, then (iii) and
$\nu_2(t^2-1)=\beta+1$ give $\nu_2(u)\ge e-\beta-1\ge1$, contradiction. If $\beta=e-1$, then
$t=2^{e-1}+1$; since $u$ is odd,
\[
\sum_{i=1}^{2u}t^{i-1}\equiv2u+2^{e-1}\pmod{2^e},
\]
whose $2$-adic valuation equals $1$. On the right-hand side of (x), $\sigma+2r+1$ is odd, so the
right side has valuation $1$; on the left, by Lemma~\ref{Nlemma1}(g),
$\nu_2(\sigma(1+v))\ge\alpha+e-\alpha-1=e-1>1$, a contradiction.

\emph{Sub-case $\alpha+1=\beta$.} As above, $\nu_2(A),\nu_2(B)\ge\alpha-1$, and since
$\sigma=BT_u$ we get $\alpha\ge\alpha-1+\nu_2(u)$, so $\nu_2(u)\le1$. On the other hand, (iii)
gives $\nu_2(u)\ge e-\beta-1=e-\alpha-2$. If $\alpha\le e-4$, this gives $\nu_2(u)\ge2$, a
contradiction; hence $\alpha=e-3$, $\beta=e-2$, $\nu_2(u)=1$. Then $k=2^{e-\alpha}=8$, and by
Lemma~\ref{Nlemma1}(a), $\nu_2(1+v)=\ell-1-\nu_2(u)=3-1-1=1$. This contradicts
Lemma~\ref{Nlemma1}(g), which gives $v\equiv-1\pmod4$.
\end{case}

\medskip
\begin{case}[$\alpha\ge e-1$]
Let $\varepsilon:=\nu_2(t^2-1)\le e$. If $\varepsilon<e$, then \eqref{AA} and \eqref{BB} give
$\nu_2(A)=\nu_2(B)=\varepsilon-2$, so $\ell=e-\varepsilon+2$ and
$\alpha=\varepsilon-2+\nu_2(u)$. The assumption $\alpha\ge e-1$ gives
$\nu_2(u)\ge e-\varepsilon+1$. Since $u<k$ and $\nu_2(k)=e-\varepsilon+2$, we get
$\nu_2(u)=e-\varepsilon+1$; then Lemma~\ref{Nlemma1}(a) gives $\nu_2(1+v)=0$, contradicting $v$
odd.

Hence $\varepsilon=e$, i.e. $t^2\equiv1\pmod q$. Then \eqref{AA} gives $2^{e-2}\mid A$, so
$\nu_2(k)\le2$. If $\nu_2(k)=1$, then $k=2$, $u=1$, and Lemma~\ref{Slemma1}(a) requires
$\nu_2(1+v)=0$, contradiction. If $\nu_2(k)=2$, then Lemma~\ref{Slemma1}(a) forces $u$ odd and
$v\equiv1\pmod4$; hence $u\in\{1,3\}$ and $v=1$. Then $\nu_2(T_u)=\nu_2(u)=0$, so $T_u$ is odd.
Since $\nu_2(\sigma)\ge e-1$, (x) reduces modulo $2^{e-1}$ to
$\sigma\equiv(\sigma+2r+1)T_u\pmod{2^{e-1}}$, whose left side is $0$ and whose right side is odd,
a contradiction.
\end{case}
\end{proof}

\begin{theorem}\label{Ncount1}
For each $e\ge3$, the system {\rm(i)--(x)} has precisely three classes of solutions
$(r,s,t,u,v)$:
\begin{enumerate}[\rm(1)]
\item $t=2^{e-1}+1$, $v=2^{e-1}-1$,
\[
s-r\equiv2^{e-2}+1\pmod{2^{e-1}},\qquad
u=\begin{cases}2^{e-1}-1,&\text{$r$ even},\\ 2^e-1,&\text{$r$ odd}.\end{cases}
\]
\item $t\not\equiv2^e-1\pmod{2^e}$, $t\equiv3\pmod4$, $\gamma:=\nu_2(t+1)\in[2,e-2]$,
$u\in\{2^{e-\gamma-1},3\cdot2^{e-\gamma-1}\}$, $v\equiv1\pmod4$ with $1\le v<2^{e-\gamma+1}$, and
\[
r=\frac{t-3}{4}+\delta 2^{e-3}+j2^{e-2},\qquad
s=\frac{3t-1}{4}+\delta 2^{e-3}-j2^{e-2}\pmod{2^e},
\]
where $j=0,1,2,3$ and $\delta\in\{1,3\}$ is determined by $t$ and $u$ as follows: for $\gamma=2$,
\[
\delta=\begin{cases}
1,&\text{$u=2^{e-3}$ and $\frac{t+1}{4}\equiv1\pmod4$, or $u=3\cdot2^{e-3}$ and $\frac{t+1}{4}\equiv3\pmod4$,}\\
3,&\text{$u=2^{e-3}$ and $\frac{t+1}{4}\equiv3\pmod4$, or $u=3\cdot2^{e-3}$ and $\frac{t+1}{4}\equiv1\pmod4$,}
\end{cases}
\]
while for $\gamma\ge3$,
\[
\delta=\begin{cases}
3,&\text{$u=2^{e-\gamma-1}$ and $\frac{t+1}{4}\equiv1\pmod{2^\gamma}$, or $u=3\cdot2^{e-\gamma-1}$ and $\frac{t+1}{4}\equiv3\pmod{2^\gamma}$,}\\
1,&\text{$u=2^{e-\gamma-1}$ and $\frac{t+1}{4}\equiv3\pmod{2^\gamma}$, or $u=3\cdot2^{e-\gamma-1}$ and $\frac{t+1}{4}\equiv1\pmod{2^\gamma}$.}
\end{cases}
\]
\item $t=2^{e-1}-1$, $v=1$, $u\in\{1,3\}$,
\[
r=2^{e-3}(\delta+1)-1+2^{e-2}j,\qquad
s=2^{e-3}(\delta-1)+2^{e-1}-2^{e-2}j,
\]
where $j=0,1,2,3$ and
\[
\delta=\begin{cases}
1,&\text{$u=1$ and $j=0,2$, or $u=3$ and $j=1,3$,}\\
3,&\text{$u=1$ and $j=1,3$, or $u=3$ and $j=0,2$.}
\end{cases}
\]
\end{enumerate}
\end{theorem}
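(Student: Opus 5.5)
By Lemma~\ref{Nlemma2}, every solution of (i)--(x) has $\alpha=\nu_2(\sigma)\in\{0,e-2\}$. The plan is to treat these two regimes separately. In each one I use the relations \eqref{AA}, \eqref{BB} and Lemma~\ref{Nlemma1} to pin down $t$, then $k=2^{\ell}$, then $u$ and $v$, and finally $r,s$. For the converse, I plug each candidate quintuple back into (i)--(x).

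\emph{Regime $\alpha=0$.} Here $\sigma$ is odd, so Lemma~\ref{Nlemma1}(f) gives $t^2\equiv1$, $t+v\equiv0$ and $v\equiv t-2\pmod{2^e}$. Together these force $2t\equiv 2$, so $t\equiv1\pmod{2^{e-1}}$. Lemma~\ref{Nlemma1}(h) excludes $t\equiv 1$, so $t=2^{e-1}+1$ and $v\equiv -t\equiv 2^{e-1}-1$.

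Next, (viii) and (ix) determine $P=r+s$ and $Q=2(s-r)$ in terms of $\sigma$. From \eqref{AA}, $4A\equiv 2(-2\sigma+2^{e-1})$, which shows $A$ is odd, hence $k=2^e$. Then $\sigma=BT_u$ with $\sigma$ odd forces $u$ odd, and (x) becomes a linear congruence $\sigma(1+v)\equiv(\sigma+2r+1)(2u+2^{e-1})$. I would solve this for $u$ modulo $2^e$ and obtain $u\equiv -1$ or $u\equiv 2^{e-1}-1$, according to the parity of $r$. Condition (ix), read modulo $2^{e-1}$, gives $s-r\equiv 2^{e-2}+1$. I then check that (i)--(vii) hold: each involves $\sigma$ times a multiple of $2^{e-1}$ or of $t^2-1$. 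This gives class~(1).

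\emph{Regime $\alpha=e-2$.} Conditions (iv)--(vii) now only need to hold modulo $4$. In particular Lemma~\ref{Nlemma1}(d),(e) give $t+v\equiv0$ and $v\equiv t-2\pmod 4$, so $t\equiv3\pmod 4$ and $v\equiv1\pmod4$. Put $\gamma=\nu_2(t+1)$, so $\nu_2(t^2-1)=\gamma+1$; Lemma~\ref{Nlemma1}(h) gives $\gamma\le e-1$.

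From \eqref{AA} and \eqref{BB}, since $\sigma(t^2-4t+1)$ has valuation $e-2+1$, I get $\nu_2(A)=\gamma-2$ whenever $\gamma\le e-2$, hence $k=2^{e-\gamma+2}$. Lemma~\ref{Nlemma1}(a) then gives $\nu_2(u)\ge e-\gamma-1$ and $\nu_2(u)+\nu_2(1+v)=e-\gamma+1$; with $\nu_2(1+v)=1$ this yields $\nu_2(u)=e-\gamma$. I should double-check this exponent against the stated $u\in\{2^{e-\gamma-1},3\cdot2^{e-\gamma-1}\}$, since the precise definition of $k$ via $AT_k$ matters here. Next, $\sigma=BT_u$ with $\nu_2(\sigma)=e-2$ fixes $u$ modulo the appropriate power of $2$, leaving exactly the two listed values of $u$ below $k$. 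The integer $v$ then ranges freely over residues $\equiv1\pmod4$ below $k/2$.

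Writing $\sigma=2^{e-2}\delta'$, (viii) and (ix) determine $r+s$ and $2(s-r)$. This produces the displayed formulas for $r,s$, with $j$ indexing the ambiguity modulo $2^{e-2}$ left by dividing by $4$. Finally, (x) together with $\sigma\equiv BT_u$ fixes $\delta\in\{1,3\}$ via the residue of $(t+1)/4$ modulo $4$ or modulo $2^\gamma$. The boundary case $\gamma=e-1$, i.e.\ $t=2^{e-1}-1$, must be handled separately. There $\nu_2(A)\ge e-3$ and $k$ is small (namely $k=4$), which forces $v=1$ and $u\in\{1,3\}$, and the same (viii)/(ix)/(x) analysis gives class~(3).

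\emph{Main obstacle.} The hardest part will be the exact solution of (x) in the regime $\alpha=e-2$. One has to compute $\sum_{i=1}^{2u}t^{i-1}=(1+t)T_u$ and $T_u$ modulo $2^e$ precisely, to second-order $2$-adic accuracy, using Proposition~\ref{LTE}. This is what extracts the rule for $\delta$, including the split between $\gamma=2$ and $\gamma\ge3$. I would first expand $T_u=(t^{2u}-1)/(t^2-1)$ via the binomial theorem in $t=-1+2^\gamma w$ and track terms down to $2^{e}$.
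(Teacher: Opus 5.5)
Your case split matches the paper's: first $\alpha=0$ versus $\alpha=e-2$, then $\gamma\le e-2$ versus $\gamma=e-1$. Several steps, however, fail as written.

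\textbf{Regime $\alpha=e-2$.} The congruences $t+v\equiv0$ and $v\equiv t-2\pmod 4$ only give $v\equiv-t\pmod4$. They do not force $t\equiv3$, $v\equiv1$: the case $t\equiv1$, $v\equiv3\pmod4$ also satisfies (iv)--(vii) and must be excluded separately. The paper uses (x) for this.
\begin{itemize}
\item Since $4\mid(1+v)$, the left side of (x) vanishes modulo $2^e$.
\item Since $\sigma+2r+1$ is odd, this forces $\sum_{i=1}^{2u}t^{i-1}\equiv0\pmod{2^e}$.
\item By Proposition~\ref{LTE} that sum has valuation $1+\nu_2(u)$. But Lemma~\ref{Nlemma1}(a) with $\nu_2(1+v)\ge2$ gives $\nu_2(u)\le\ell-3\le e-3$, a contradiction.
\end{itemize}

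Your valuation $\nu_2(A)=\gamma-2$ is off by one. The factor $\sigma(t^2-4t+1)+t-1$ has valuation exactly $1$, because $\nu_2(t-1)=1<e-1$. Hence $\nu_2(4A)=\gamma+1$, so $\nu_2(A)=\gamma-1$ and $k=2^{e-\gamma+1}$. Lemma~\ref{Nlemma1}(a) then gives $\nu_2(u)=e-\gamma-1$, i.e.\ $u\in\{k/4,3k/4\}$, and $v$ runs over all $v\equiv1\pmod4$ below $k$. Your $k$ would yield different values of $u$, and nothing supports your ``below $k/2$''. Similarly, for $\gamma=e-1$ you need $\nu_2(A)\ge e-2$ (not $e-3$) to get $k\in\{2,4\}$. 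One then excludes $k=2$ via (i), since it forces $u=v=1$. With your bound, $k=8$ is never ruled out.

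\textbf{What (x) can and cannot do.} You assign to (x) work it cannot do.
\begin{itemize}
\item For $\alpha=0$, with $t=2^{e-1}+1$, $v=2^{e-1}-1$ and $\sigma$ odd, (x) reduces to $2^{e-1}\equiv2u(\sigma+2r+1)\pmod{2^e}$. Once $s-r\equiv2^{e-2}+1\pmod{2^{e-1}}$, this holds for every odd $u$, so you cannot solve it for $u$. What determines $u$ is the defining identity $\sigma=BT_u=Bu$ (valid since $t^2\equiv1$), combined with $\sigma\equiv-(r+s)$ from (viii). This gives $u=B^{-1}\sigma$, which is how the paper obtains the two values.
\item For $\alpha=e-2$ and $t\equiv3\pmod4$, both sides of (x) are $\equiv2^{e-1}\pmod{2^e}$, so (x) holds automatically and is not the main obstacle. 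The quantity $\delta=\sigma'$ is determined solely by $2^{e-2}\sigma'\equiv BT_u$. You read $B\bmod 2^{\gamma+1}$ off \eqref{BB} and use $T_u\equiv u\pmod{2^{e-1}}$; the first-order expansion suffices. The split between $\gamma=2$ and $\gamma\ge3$ comes from $(t-1)/2=2^{\gamma-1}t'-1\pmod4$, where $t=-1+2^\gamma t'$.
\end{itemize}

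\textbf{Errors in the printed statement.} If you carry this computation out correctly, you get $\delta\equiv-t'u'\pmod 4$ for $\gamma=2$ and $\delta\equiv t'u'\pmod 4$ for $\gamma\ge3$, where $u'=u/2^{e-\gamma-1}$. This is the reverse of the printed rule: the paper writes $2^{\gamma-1}t'+1$ for $(t-1)/2$. For example, take $e=4$, $t=3$, $u=2$, $j=0$. The printed choice $\delta=1$ gives $(r,s)=(2,4)$, for which $\sigma\equiv12$ and (viii) fails. The pair $(6,8)$ coming from $\delta=3$ is a genuine solution.

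Two further misprints:
\begin{itemize}
\item For $\gamma\ge3$, the printed condition on $\frac{t+1}{4}\bmod 2^\gamma$ can never hold, since $\frac{t+1}{4}$ is even.
\item The printed formula for $s$ in class (3) drops the $-1$ that appears in the proof.
\end{itemize}
None of these errors affects the count.
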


\begin{proof}
Put $\alpha=\nu_2(\sigma)$, $\beta=\nu_2(t-1)$, $\gamma:=\nu_2(t+1)$. By Lemma~\ref{Nlemma2},
either (a) $\alpha=0$, (b) $\alpha=e-2$ with $2\le\gamma\le e-2$, or (c) $\alpha=e-2$ with
$\gamma=e-1$.

\medskip
\begin{case}[$\alpha=0$]
By Lemma~\ref{Nlemma1}(d)--(e), $t+v\equiv0$ and $v-t+2\equiv0\pmod{2^e}$, so $v\equiv-1$ and
$t\equiv1\pmod{2^{e-1}}$. By Lemma~\ref{Nlemma1}(h), $t\not\equiv1\pmod{2^e}$; hence
$t=1+2^{e-1}$. Then (iv) gives $tv+1\equiv0\pmod{2^e}$, i.e. $v=2^{e-1}-1$. Modulo $2$, (viii)
shows $A=rt+s$ is odd, so $k=2^e$. Condition (ix) gives
$2(s-r)\equiv t+1=2^{e-1}+2\pmod{2^e}$, i.e. $s-r\equiv2^{e-2}+1\pmod{2^{e-1}}$. Finally, by
(viii), $\sigma+2^{e-1}=\sigma(1+2^{e-1})=\sigma t=-(r+s)+2^{e-1}$, so $\sigma=-(r+s)$. Since
$\sigma$ is odd, $r$ even iff $s$ odd, and
\[
B=r+st=r+s+2^{e-1}s=\begin{cases}2^{e-1}-\sigma,&s\text{ odd},\\-\sigma,&s\text{ even}.\end{cases}
\]
Hence $\sigma^{-1}B=2^{e-1}-1$ if $s$ odd and $2^e-1$ if $s$ even. Since
$T_u=\sum_{i=1}^u t^{2(i-1)}=u$ and $\sigma=BT_u=Bu$,
\[
u=B^{-1}\sigma=\begin{cases}2^{e-1}-1,&r\text{ even},\\ 2^e-1,&r\text{ odd}.\end{cases}
\]
This is family (1).
\end{case}

We now assume $\alpha=e-2$. By Lemma~\ref{Nlemma1}(g), $t+v\equiv0\pmod4$ and
$v-t+2\equiv0\pmod4$, so either ($t\equiv3$, $v\equiv1$) or ($t\equiv1$, $v\equiv3$) modulo $4$.
In the second case $4\mid(1+v)$, so the left side of (x) is $0\pmod{2^e}$; since $\sigma+2r+1$ is
odd, $W:=\sum_{i=1}^{2u}t^{i-1}\equiv0\pmod{2^e}$. But (iii) gives
$\nu_2(u)\ge e-\beta-1$ with $\beta\ge2$, so by Proposition~\ref{LTE},
$\nu_2(W)=\nu_2(2u)=1+\nu_2(u)\ge e-\beta<e$, a contradiction. Hence
$t\equiv3\pmod4$ and $v\equiv1\pmod4$.

Put $\gamma=\nu_2(t+1)$. By Lemma~\ref{Nlemma1}(h), $t\not\equiv-1\pmod{2^e}$, so
$2\le\gamma\le e-1$, and
\begin{equation}\label{scond4}
\nu_2(t-1)=\nu_2(t^2+1)=\nu_2(t^2-4t+1)=\nu_2(1+v)=1.
\end{equation}
Lemma~\ref{Nlemma1}(a) then gives $\nu_2(u)=\ell-2$, so $u=k/4$ or $3k/4$.

\medskip
\begin{case}[$\alpha=e-2$, $2\le\gamma\le e-2$]
By \eqref{AA} and \eqref{scond4}, $\nu_2(A)=\gamma-1$, so $k=2^{e-\gamma+1}$. Write
$\sigma\equiv2^{e-2}\sigma'$ and $t\equiv-1+2^\gamma t'\pmod{2^e}$ with $\sigma',t'$ odd. Then
$-\sigma t\equiv2^{e-2}\sigma'\pmod{2^e}$, so (viii) reduces to
\[
r+s\equiv t-1+2^{e-2}\sigma'\pmod{2^e}.
\]
Since $\sigma(1-t^2)\equiv0\pmod{2^e}$, (ix) gives
$s-r\equiv(t+1)/2\pmod{2^{e-1}}$. Combining,
\[
r\equiv\frac{t-3}{4}+2^{e-3}\sigma'\pmod{2^{e-2}},
\]
so
\[
r=\frac{t-3}{4}+2^{e-3}\sigma'+j2^{e-2},\qquad
s=\frac{3t-1}{4}+2^{e-3}\sigma'-j2^{e-2}\pmod{2^e},
\]
where $j=0,1,2,3$.

We now show that $\sigma'$ depends only on $u$ and $t$. Write $u=2^{e-\gamma-1}u'$ with $u'$ odd.
By \eqref{BB},
\[
4B\equiv-(t+1)\bigl(\sigma(t^2+1)-3(t-1)\bigr)\pmod{2^e}.
\]
Since $\nu_2((t+1)\sigma(t^2+1))=\gamma+e-1\ge\gamma+3$,
\[
4B\equiv3(t+1)(t-1)=3\cdot2^{\gamma+1}(2^{\gamma-1}t'+1)t'\pmod{2^{\gamma+3}},
\]
so
\begin{equation}\label{ncond5}
\frac{B}{2^{\gamma-1}}\equiv3(2^{\gamma-1}t'+1)t'
=\begin{cases}t'\pmod4,&\gamma=2,\\ -t'\pmod4,&\gamma\ge3.\end{cases}
\end{equation}
Writing $Y:=t^2-1$,
\[
T_u=\sum_{i=1}^u(1+Y)^{i-1}=\frac{(1+Y)^u-1}{Y}=\binom{u}{1}+\binom{u}{2}Y+\cdots.
\]
Since $\nu_2(u)=e-\gamma-1$ and $\nu_2(Y)=\gamma+1$, we get $T_u\equiv u\pmod{2^{e-1}}$, hence
$T_u/2^{e-\gamma-1}\equiv u'\pmod4$. Multiplying this with \eqref{ncond5},
\[
\sigma'=\frac{BT_u}{2^{e-2}}\equiv\begin{cases}t'u'\pmod4,&\gamma=2,\\ -t'u'\pmod4,&\gamma\ge3.\end{cases}
\]
Thus $\sigma'$ is determined by $u$ and $t$. This is family (2).
\end{case}

\medskip
\begin{case}[$\alpha=e-2$, $\gamma=e-1$]
Here $t=2^{e-1}-1$. By \eqref{AA}, $\nu_2(A)\ge e-2$, so $k\in\{2,4\}$. If $k=2$, then
$u=v=1$, contradicting (i). Thus $k=4$, $v=1$, $u\in\{1,3\}$. Writing
$\sigma\equiv2^{e-2}\sigma'\pmod{2^e}$ with $\sigma'\in\{1,3\}$, conditions (viii) and (ix)
reduce to
\[
r+s\equiv2^{e-1}+2^{e-2}\sigma'-2\pmod{2^e},\qquad
s-r\equiv2^{e-2}\pmod{2^{e-1}},
\]
so $r\equiv2^{e-3}(1+\sigma')-1\pmod{2^{e-2}}$, i.e.
\[
r=2^{e-2}j+2^{e-3}(\sigma'+1)-1,\qquad
s=-2^{e-2}j+2^{e-3}(\sigma'-1)-1+2^{e-1},
\]
with $j=0,1,2,3$. Since $T_u\equiv u\pmod{2^e}$ and $B=r+st=2^{e-2}(2j+1)\pmod{2^e}$,
\[
2^{e-2}\sigma'=\sigma=BT_u=2^{e-2}(2j+1)u\pmod{2^e},
\]
so $\sigma'\equiv u(2j+1)\pmod4$. This is family (3).
\end{case}

Conversely, a direct verification shows that every quintuple in families (1)--(3) satisfies
(i)--(x).
\end{proof}

\begin{theorem}\label{Ncount2}
For each $e\ge3$, the number of non-smooth skew morphisms of skew-type $4$ on the cyclic additive
$2$-group $\mathbb{Z}_{2^{e+2}}$ is
\[
N(e)=2^{e+1}+\tfrac13\bigl(2^{2e-1}-8\bigr).
\]
\end{theorem}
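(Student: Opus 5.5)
We count the solutions of the system (i)--(x) family by family, using Theorem~\ref{Ncount1}. By Theorem~\ref{main1}, distinct quintuples $(r,s,t,u,v)$ give distinct non-smooth skew morphisms of skew-type $4$, and every such skew morphism arises in this way. So $N(e)$ is the total number of quintuples in families (1)--(3) of Theorem~\ref{Ncount1}.

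First we check that the three families are pairwise disjoint, which is immediate from the values of $t$.
\begin{itemize}
\item In family (1), $t=2^{e-1}+1\equiv1\pmod4$.
\item In family (2), $t\equiv3\pmod4$ with $\nu_2(t+1)\le e-2$.
\item In family (3), $t=2^{e-1}-1$, so $\nu_2(t+1)=e-1$.
\end{itemize}
We also check, from the constraints listed in each family, that every listed choice of parameters yields a genuinely distinct quintuple, so that no two choices coincide. Theorem~\ref{Ncount1} already asserts that each listed quintuple is a solution.

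\emph{Family (1).} Here $t$ and $v$ are fixed. The residue $r\in\mathbb{Z}_{2^e}$ is arbitrary. Given $r$, the congruence $s-r\equiv2^{e-2}+1\pmod{2^{e-1}}$ leaves exactly $2$ values of $s$ modulo $2^e$. Finally $u$ is determined by the parity of $r$. This family therefore contributes $2\cdot2^e=2^{e+1}$ solutions.

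\emph{Family (3).} Here $t$ and $v$ are fixed. There are $2$ choices of $u\in\{1,3\}$ and $4$ choices of $j$. Then $\delta$, and hence $r$ and $s$, are determined. This family contributes $8$ solutions.

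\emph{Family (2).} Fix $\gamma$ with $2\le\gamma\le e-2$ and count each parameter in turn.
\begin{itemize}
\item The residues $t$ modulo $2^e$ with $\nu_2(t+1)=\gamma$ number $2^{e-\gamma-1}$.
\item There are $2$ choices of $u$.
\item The values $v\equiv1\pmod4$ with $1\le v<2^{e-\gamma+1}$ number $2^{e-\gamma-1}$.
\item There are $4$ choices of $j$.
\end{itemize}
Once these are chosen, $\delta$ is determined by $t$ and $u$, and then $r$ and $s$ are fixed. So each $\gamma$ contributes $2^{e-\gamma-1}\cdot2\cdot2^{e-\gamma-1}\cdot4=2^{2(e-\gamma)+1}$ solutions. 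Substituting $i=e-\gamma$, which runs from $2$ to $e-2$, the family contributes
\[
\sum_{i=2}^{e-2}2^{2i+1}=\frac{2\bigl(4^{e-1}-16\bigr)}{3}=\frac{2^{2e-1}-32}{3}.
\]
When $e=3$ the range of $\gamma$ is empty and this expression correctly equals $0$.

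Adding the three contributions gives
\[
2^{e+1}+8+\frac{2^{2e-1}-32}{3}=2^{e+1}+\frac13\bigl(2^{2e-1}-8\bigr),
\]
which is the claimed value of $N(e)$.

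The only delicate point is confirming that the parameters in each family really range independently over the stated sets. In family (2), $v$ must run over all $v\equiv1\pmod4$ below $k=2^{e-\gamma+1}$; these are automatically coprime to $k$. Each such $v$ must also be compatible with conditions (i) and (ii) of the system, which holds because $\nu_2(1+v)=1$ and $\nu_2(u)=\ell-2$. In family (1), $u$ and $v$ must be less than $k=2^e$. Both points follow from the derivations in the proof of Theorem~\ref{Ncount1}.
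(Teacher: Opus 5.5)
Your count follows the paper's proof step for step. Both rest on the claim that distinct quintuples $(r,s,t,u,v)$ give distinct skew morphisms, and this claim fails in family (2).

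The map $\varphi$ in \eqref{Skew} depends only on $r,s,t$ and $\sigma=(r+st)T_u$; the parameter $v$ enters only through $\pi$. In family (2) you have $\nu_2(u)=\ell-2$, and every admissible $v$ is $\equiv1\pmod4$. So for two such values $v,v'$ you get $u(v-v')\equiv0\pmod k$. Hence $1+2u(1+v)$ and $1+2u(1+v+v^2)$ are also unchanged modulo $m=2k$, and all $2^{e-\gamma-1}$ choices of $v$ give literally the same skew morphism.

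For instance, take $e=4$, so $q=16$, $k=8$ and $\sigma=12$. Both $(6,8,3,2,1)$ and $(6,8,3,2,5)$ satisfy (i)--(x). Both give $\varphi(4i)=12i$, $\varphi(4i+1)=12i+27$, $\varphi(4i+2)=12i+38$, $\varphi(4i+3)=12i+33\pmod{64}$, with power function $1,5,9,13\pmod{16}$ on the four residue classes.

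The root cause is that $v$ is not a free parameter. By construction in Lemma~\ref{PF}, $v=\av(1)=\tfrac12\bigl(\pi(\varphi(1))+\pi(\varphi^2(1))\bigr)=1+u(2+v+v^2)\pmod k$, and this condition is missing from (i)--(x). In family (2) it forces $v=1$; in families (1) and (3) it holds automatically.

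So the point you flagged as delicate is not the problem: the parameters do range independently, but the map from quintuples to skew morphisms is not injective. Counting distinct skew morphisms, family (2) contributes $\sum_{\gamma=2}^{e-2}2^{e-\gamma-1}\cdot2\cdot4=2^{e+1}-16$. The three families then give $2^{e+1}+(2^{e+1}-16)+8=2^{e+2}-8$ in total. This agrees with $N(e)$ only for $e=3$, where family (2) is empty; for $e=4$ it gives $56$, not $72$. The paper's proof makes the same move, inheriting the ``clearly'' injectivity claim at the end of the proof of Theorem~\ref{main1}. So following it more closely would not rescue the stated formula.

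A smaller point: you take on trust that every printed quintuple of family (2) is a solution, but the printed $\delta$-rule is not correct. For $\gamma=2$ its two cases are interchanged: the example above has $\delta=3$, whereas the rule gives $\delta=1$, and the corresponding pair $(r,s)=(2,4)$ violates (viii). For $\gamma\ge3$ the congruence $\frac{t+1}{4}\equiv1\pmod{2^\gamma}$ can never hold. This does not change the number of pairs $(r,s)$ per $(t,u)$, since exactly one $\delta$ works.
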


\begin{proof}
By Theorem~\ref{main1}, non-smooth skew morphisms of skew-type $4$ on $\mathbb{Z}_{2^{e+2}}$
correspond bijectively to the solutions $(r,s,t,u,v)$ of (i)--(x), which by
Theorem~\ref{Ncount1} fall into three families.

\emph{Family (1).} $t=2^{e-1}+1$ and $v=2^{e-1}-1$ are fixed. For each choice of $r\in\mathbb{Z}_{2^e}$,
$u$ is uniquely determined, and there are exactly $2$ choices for $s$ since
$s\equiv r+2^{e-1}+1\pmod{2^{e-1}}$. This contributes $2^{e+1}$ solutions.

\emph{Family (2).} For $2\le\gamma\le e-2$, $t$ is determined by $0\le t<2^e$,
$t\equiv3\pmod4$, $\nu_2(t+1)=\gamma$, so there are $2^{e-\gamma-1}$ choices for $t$. For each
such $t$, $k=2^{e-\gamma+1}$, and there are $2$ choices for $u$, $k/4=2^{e-\gamma-1}$ choices for
$v$, and $4$ choices for $(r,s)$; hence each $t$ contributes
$2\cdot2^{e-\gamma-1}\cdot4=2^{e-\gamma+2}$ solutions. Summing over $\gamma$,
\[
\sum_{\gamma=2}^{e-2}2^{e-\gamma-1}\cdot2^{e-\gamma+1}
=2\sum_{j=2}^{e-2}2^{2j}=2^5\sum_{j=0}^{e-4}4^j=\frac{2^{2e-1}-2^5}{3}.
\]

\emph{Family (3).} $(t,v)=(2^{e-1}-1,1)$ is fixed, $u\in\{1,3\}$ ($2$ choices), and for each $u$
there are $4$ choices for $(r,s)$, contributing $8$ solutions.

Adding the three contributions,
\[
N(e)=2^{e+1}+\frac{2^{2e-1}-2^5}{3}+8
=2^{e+1}+\frac{2^{2e-1}-8}{3},
\]
as required.
\end{proof}

\section{Concluding remarks and open problems}\label{sec:concl}

Combining Theorems~\ref{main-smooth} and~\ref{main-nonsmooth}, we obtain the following picture for
the cyclic additive $2$-groups.

\begin{corollary}
The cyclic additive $2$-group $\mathbb{Z}_{2^{e+2}}$ admits a skew morphism of skew-type $4$ if and
only if $e\ge3$. In that case the total number of skew morphisms of skew-type $4$ on
$\mathbb{Z}_{2^{e+2}}$ is
\[
T(e)=S(e)+N(e)=
\begin{cases}
20,&e=3,\\[2pt]
2^{e+2}-32+2^{e+1}+\tfrac13\bigl(2^{2e-1}-8\bigr),&e\ge4.
\end{cases}
\]
\end{corollary}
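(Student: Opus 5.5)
The plan is to assemble the corollary from the two main theorems. Three facts are needed. First, smooth and non-smooth skew morphisms form disjoint classes, since a skew morphism either has period $1$ or it does not. Second, every skew morphism of skew-type $4$ on $\mathbb{Z}_{2^{e+2}}$ falls into one of these two classes. Third, Theorems~\ref{main-smooth} and~\ref{main-nonsmooth} (equivalently Theorems~\ref{Scount2} and~\ref{Ncount2}) give the existence range and the count of each class. Hence the total number is $T(e)=S(e)+N(e)$, where $S(e)$ is taken to be $0$ whenever no smooth skew morphism exists.

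\emph{Existence.} Smooth ones exist if and only if $e\ge4$, and non-smooth ones exist if and only if $e\ge3$. The union is therefore nonempty exactly when $e\ge3$. The small cases need a short remark. For $e=0$ the group $\mathbb{Z}_4$ has a kernel of index $4$ only if the kernel is trivial, which Proposition~\ref{Formula}(a) excludes. For $e=1,2$ both classes are empty by Theorem~\ref{Scount2}(a) and the remark at the start of Section~\ref{sec:nonsmooth}. For $e=2$, note that Lemmas~\ref{Nlemma1} and~\ref{Nlemma2} were only stated for $e\ge3$. So I would confirm directly from Theorem~\ref{main1} that $\mathbb{Z}_{16}$ has no non-smooth solution: here $n/4=4$, which forces $t\equiv\pm1\pmod 4$, and the argument of Lemma~\ref{Nlemma1}(h) then rules both out.

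\emph{Counting.} For $e\ge4$, substituting $S(e)=2^{e+2}-32$ and $N(e)=2^{e+1}+\tfrac13(2^{2e-1}-8)$ gives the second line of the displayed formula immediately. For $e=3$ we have $S(3)=0$, so $T(3)=N(3)$. I expect this case to be the main obstacle. Plugging $e=3$ into the closed formula of Theorem~\ref{Ncount2} gives
\[
N(3)=16+\tfrac13(32-8)=24,
\]
not $20$. The family-by-family count in the proof of Theorem~\ref{Ncount2} agrees with $24$: family (1) contributes $2^{e+1}=16$, family (2) is empty because $\gamma\in[2,e-2]=[2,1]$, and family (3) contributes $8$. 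So the stated value $20$ cannot simply be read off from Theorem~\ref{main-nonsmooth}.

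To settle the value at $e=3$, I would recheck the $e=3$ solutions of (i)--(x) by hand in $\mathbb{Z}_{32}$, where $q=8$. Family (1) has $t=5$, $v=3$, $k=8$ and $u\in\{3,7\}$; family (3) has $t=3$, $v=1$, $k=4$ and $u\in\{1,3\}$. For each quintuple I would test conditions (iii)--(x) explicitly, paying particular attention to the constraints $1<u,v<m/2$ from Theorem~\ref{main1}, which may exclude $u=1$ or $v=1$ in family (3). I would also check that each quintuple really has kernel of index $4$. The aim is either to identify the $4$ spurious quintuples that reduce the count to $20$, or to conclude that the correct value is $24$ and record that correction. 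If the count is corrected to $24$, then $T(3)$ agrees with the general expression $S(e)+N(e)$ taken with $S(3)=0$, and the case split in the corollary serves only to indicate that the smooth term vanishes at $e=3$.
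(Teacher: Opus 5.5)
Your route is the paper's own: the corollary is obtained there only by adding the counts of Theorems~\ref{main-smooth} and~\ref{main-nonsmooth}. Your existence argument is fine, with one repair. For $e=2$, the $t\equiv1$ branch of Lemma~\ref{Nlemma1}(h) ends with ``as $e\ge3$'' and gives nothing for $\mathbb{Z}_{16}$. Cite Proposition~\ref{main2-smooth} instead, which makes every skew morphism of $\mathbb{Z}_{16}$ smooth.

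\textbf{The gap at $e=3$.} You end at an either/or, so the proposal proves neither value. The question must be settled, and it settles against the statement: $T(3)=24$, so the printed $20$ cannot be proved. With $q=8$ one has $t^2\equiv1$ and $T_u\equiv u$, and (ix) gives $s\equiv r+\tfrac{t+1}{2}\pmod 4$. The four values of $t$ then go as follows.
\begin{itemize}
\item $t=7$: $4\mid A$, so $k\le2$; no solutions.
\item $t=1$: $\sigma\equiv-A$ is odd, so $u\equiv-1$, $v=3$, and (iv) fails.
\item $t=3$: $k=4$, $v=1$, $u\in\{1,3\}$. Condition (viii) reduces to $r\equiv u\pmod 4$ and the other conditions are automatic, giving $8$ solutions.
\item $t=5$: $k=8$. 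The option $u$ even, $v\in\{1,5\}$ dies in (iv). What remains is $v=3$ with $u\equiv\sigma B^{-1}$, giving $16$ solutions.
\end{itemize}
The bound $1<u,v$ in Theorem~\ref{main1} is a misprint for $1\le u,v$. For $(r,s,t,u,v)=(1,3,3,1,1)$, the map $\varphi(4i)=12i$, $\varphi(4i+1)=12i+7$, $\varphi(4i+2)=12i+22$, $\varphi(4i+3)=12i+13$ on $\mathbb{Z}_{32}$ is a genuine skew morphism of order $8$, with $\pi=1,3,5,7$ on the cosets of $\langle4\rangle$. Also, the displayed $s$ in family (3) drops the $-1$ that the proof of Theorem~\ref{Ncount1} derives. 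Taken literally, that formula makes all eight family-(3) quintuples violate (viii), so no reading of the paper leads to $20$.

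\textbf{The line for $e\ge4$ cannot simply be quoted either.} Theorem~\ref{Ncount2} counts quintuples, but in family (2) the passage from quintuples to skew morphisms is not injective. Formula \eqref{Skew} does not involve $v$. Since $\nu_2(u)=\nu_2(k)-2$, the residues $u(1+v)$ and $u(1+v+v^2)$ modulo $k$ depend only on $v\bmod 4$. Hence all $k/4$ admissible values of $v$ produce the same $\varphi$ and the same $\pi$.

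For example, at $e=4$ the quadruple $(r,s,t,u)=(6,8,3,2)$ satisfies (i)--(x) with both $v=1$ and $v=5$. Both give the same skew morphism of $\mathbb{Z}_{64}$, with power values $1,5,9,13$ modulo $16$. (This quadruple has $\delta=3$. The printed $\delta$-rule would give $\delta=1$ here, because of a sign slip in \eqref{ncond5}, where in fact $t-1=2(2^{\gamma-1}t'-1)$. This does not affect the count.) Indeed the true $v=\av(1)$ satisfies $2v\equiv\pi(1)+\pi(3)\pmod m$, which forces $v=1$ throughout family (2).

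So the bijection claimed in Theorem~\ref{main1} fails. Family (2) contributes $8\sum_{\gamma=2}^{e-2}2^{e-\gamma-1}=2^{e+1}-16$ rather than $\tfrac13(2^{2e-1}-32)$. Granting the rest of Theorem~\ref{Ncount1}, this gives $N(e)=2^{e+2}-8$, which agrees with $N(3)=24$. The hand check you planned for $e=3$ is exactly what is needed, and it has to be carried out for $e\ge4$ as well.
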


The classification of Problem~\ref{prob}(A) for cyclic $2$-groups is now complete for
$\kappa\le4$: the cases $\kappa\le3$ were settled in~\cite{CJT2007,HKZ2021} and the case $\kappa=4$
is the content of the present paper. Several natural questions remain.

\begin{question}
Can the methods of this paper be extended to classify the skew morphisms of skew-type $2^a$ on
$\mathbb{Z}_{2^{e+2}}$ for arbitrary $a$, and to give a closed-form enumeration?
\end{question}

\begin{question}
The skew morphisms  treated here are of skew-type $4$, which are necessarily of period $2$. 
 Is there a uniform description of all skew morphisms of period
$2$ on cyclic groups, extending the results of~\cite{HKZ2021}?
\end{question}

\begin{question}
The method used in this paper is essentially a covering technique, as described in
\cite{WHYZ2019,Zhang2015a,Zhang2015b,HQ2026}. In the case $\kappa=4$, the induced skew morphism
$\overline{\varphi}$ on $\mathbb{Z}_n/\Ker\varphi$ is an automorphism of order $2$. 
It seems feasible to extend this
approach to classify all skew morphisms $\varphi$ on cyclic groups for which the induced skew
morphism $\overline{\varphi}$ is an automorphism.
\end{question}

We hope to address some of these questions in future work.


\end{document}